\title{A commutative version of the group ring}
\author{W.H. MANNAN\footnote{School of Mathematics, University of Southampton, Southampton, SO\textup{17 1}BJ.  \newline \noindent e-mail \textup{: \texttt{wajid@mannan.info}}}}
\newtheorem{theorem}{${}$\hspace{-4.5mm}Theorem}[section]
\newtheorem{corollary}[theorem]{${}$\hspace{-4.5mm} Corollary}
\newtheorem{definition}[theorem]{${}$\hspace{-4.5mm} Definition}
\newtheorem{lemma}[theorem]{${}$\hspace{-4.5mm} Lemma}
\newtheorem{conjecture}[theorem]{${}$\hspace{-4.5mm}Conjecture}
\newtheorem{pot}[theorem]{${}$\hspace{-4.5mm}Potential counterexample}
\newcommand {\F} {\mathbb F}
\newcommand {\Z} {\mathbb Z}
\newcommand {\R} {\mathbb R}
\newcommand {\kG} {\kappa[G]}
\newcommand {\KG} {{\kappa[G]}^\#}
\newcommand {\kF} {\kappa[F_I]}
\newcommand {\KF} {{\kappa[F_I]}^\#}
\newcommand {\kH} {\kappa[H]}
\newcommand {\KH} {{\kappa[H]}^\#}
\newcommand{\vecl}[1] {\stackrel{\to}{#1}}
\newcommand{\barl}[1] {\overline{#1}}
\begin{document}

\maketitle

\begin{abstract}
We construct a commutative version of the group ring and show that it allows one to translate questions about the normal generation of groups into questions about the generation of ideals in commutative rings.  We demonstrate this with an alternative proof of a result about the normal generation of the free product of two cyclic groups.
\end{abstract}

 \section{Introduction}\label{intro}

A number of outstanding problems arising in low dimensional topology and combinatorial group theory share a common core difficulty; it is in general hard to show that a normal subgroup $H$ of a  group $G$ cannot be normally generated by a set of elements, $X \subset H$, with some specified property.  The most obvious obstruction is that if  $H$ is normally generated by $X$, then the image of $X$  would generate the image of $H$ in the  abelianisation, $G/[G,G]$.  However a new obstruction is required to address problems where abelianisation is too blunt.

Fix a field $\kappa$.  The normal subgroup structure of a group $G$ is reflected in the ideal structure of its group ring $\kG$, in a fairly simple way:  if $h_1,\cdots,h_n$ normally generate a normal subgroup $H \lhd G$, then the elements $1-h_1,\cdots,1-h_n$ generate the kernel of the induced map $\kG \to \kappa[G/H]$ as a two-sided ideal.  However even for a finitely presented group $G$, the group ring $\kG$ need not be either Noetherian or commutative, making its ideals harder to work with.

The approach of this paper is to instead work with a commutative version of the group ring (taken over a field $\kappa$ of characteristic 0).  This provides a functor from the category of groups to the category of commutative rings.  Questions about the normal generation of subgroups are thus turned into questions about the generation of ideals.  We demonstrate how non-trivial group theoretic results may thus be obtained from elementary commutative algebra.

We note that our construction has similarities to various existing algebraic structures.  For example the identities in Lemma                  \ref{lem13} and Corollary \ref{cor3} suggest a relationship with Clifford algebras, whilst Lemma \ref{lem10} suggests a relationship with the trace polynomials of Horowitz \cite{Horo}.  One can think of our construction as `less free' than these.  Another commutative analogue of group rings is given in \cite{Wood}.  

We first outline some problems which require a new such obstruction.  Given a finite set of generators for a finitely presented group $G$,
let $F$ be the free group on the generators and let $G \cong F/R$
where $R \lhd F$. {\bf The Relation Gap problem} \cite{Brid} asks: Must
the minimal number of elements needed to normally generate $R$ in $F$ equal the minimal number of elements needed to normally generate a subgroup 
of $R$ in $F$, which surjects onto the abelianisation $R/[R,R]$.

{\bf Wall's D2 problem} is a major unanswered question in low
dimensional topology.   It asks if cohomological dimension and
geometric dimension agree for homotopy types of finite cell
complexes.  C.T.C. Wall proved that they do agree, except possibly
for cell complexes of geometric dimension 3 and cohomological
dimension 2 \cite[Theorem E]{Wall}. However this remaining case has resisted attack for the last forty five years.  It has long been known that a counterexample resolving the Relation Gap problem would under certain hypotheses also solve Wall's D2 problem \cite{Harl}.  More recently these hypotheses have been reduced \cite{Mann}.  	In \cite{Mann1} Wall's D2 problem is expressed in terms of normal generation.

{\bf The Kervaire conjecture} posits that the free product of the integers with a non-trivial group cannot be normally generated by a single element.  

{\bf The Wiegold problem} \cite[Question 5.52]{Kour1} asks if a
finitely generated perfect group must be normally generated by one
element.  For a finite perfect group it is elementary to show that it is the normal closure of a single element.  It is easy to find finitely generated perfect groups where this is apparently not the case, yet there do not currently exist obstructions which would allow us to prove this.

A  conjecture due to H. Short \cite[Conjecture 2]{Howi}  posits: The number of elements needed to normally
generate a free product of $n$ (non-trivial) cyclic groups is at least $n/2$.  For
$n=3$ this was known for forty years as the Scott-Wiegold conjecture \cite[Problem 5.53]{Kour}, and was eventually proven \cite{Howi} by Jim Howie.  This problem is too subtle to approach by abelianisation as the free product of 3 cyclic groups may abelianise to a cyclic group (which is generated by a single element).  In this case it was resolved using topological methods.

The $n=4$ case of the above conjecture is trivially implied by the $n=3$ case.  If it could be proven for $n=5$ then the Wiegold problem would be solved: 

\begin{pot}{\rm
Let $G = C_p  \ast C_q \ast
C_r  \ast C_s  \ast C_t / \langle \pi \rangle$, where $\pi$ is the product of the generators of the
cyclic factors, and $p,q,r,s,t$ are pairwise coprime.  $G$ is perfect (by the Chinese remainder Theorem) but if it were the normal closure of a single element $x$, then the free product of 5 cyclic groups would be normally generated by just 2 elements: $\pi, x$.  Note that $G$ here is generated by $4$ generators, as the triviality of $\pi$ in $G$ implies that we may express one generator in terms of the other $4$.
\label{pot1}}
\end{pot}

There are also many potential counterexamples which could solve Wall's D2 problem and / or the Relation Gap problem \cite{Beyl, Brid, John}.  We mention a candidate due to recently published work by  Gruenberg and Linnell \cite{Grun}:

\begin{pot}\label{pot2}
{\rm
 Fix coprime integers $p,q$ and let $G=(C_p \times
\Z) \ast (C_q \times \Z)$.  Let $F$ be the free group on the natural
4 generators of $G$ and let $G \cong F/R$, where $R \lhd F$.   From \cite[Proposition
1.2]{Grun} we know that 3 elements of $R$ may be chosen so that their
normal closure in $F$ surjects onto the quotient $R/[R,R]$. However
it appears that $R$ cannot be normally generated by fewer than 4
elements.}  
\end{pot}

If this could be proven then the Relation Gap problem
would be solved. Further, if it was shown that every finite
presentation of $G$ required at least as many relators as generators, then Wall's D2 problem would also be solved.  Without loss of generality, the generators in such a presentation would be the natural 4 generators, together with a finite set of trivial generators (representing $e\in G$).

In the context of our methods, the Relation Gap and Wiegold problems could be approached by computing the commutative rings for  the groups from the examples above, and showing that the relevant ideal cannot be generated in an undesirable way.

Although we do not attempt to attack these problems in this paper, we are able to demonstrate the method by proving a similar result (originally due to Boyer \cite{Boye}); the free product of two cyclic groups cannot be normally generated by a proper power.  As in the examples above, this problem cannot be approached by abelianising the group, as the abelianisation would be cyclic if the orders of the cyclic factors are coprime.

In \S\ref{const} we define the commutative version of a group ring.  In \S\ref{iden}, \S\ref{desc} we discuss its properties, culminating in a complete description.  In \S\ref{cyc} we compute this for the free product of two cyclic groups.  We show that in the light of our methods, Boyer's result reduces to an absolutely elementary statement in commutative algebra.   Finally we show that the Scott-Wiegold conjecture may also be reduced to a statement about a commutative ring.  This time however the statement is not immediately obvious.

Having applied our commutative version of group rings to problems concerning $2$-generated and $3$-generated groups, it is our hope that these methods may be applied to the $4$-generated groups in Potential counterexamples \ref{pot1} and \ref{pot2}.  In this way we hope that our methods will eventually be able to resolve the major questions which we have mentioned here.

 \section{Construction of the commutative group ring}\label{const}

Fix a field $\kappa$ of characteristic 0.  All ring homomorphisms of $\kappa$-- algebras will be understood to fix $\kappa$.  The inverse operation on a group  $G$ extends  $\kappa$--linearly to an involution $*\colon \kG \to \kG$.  The invariants of $*$, $\kG^*$, are then a subset of $\kG$. The set of commutators $\left[\kG,\,\kG^*\right]$ generate a 2-sided ideal:  $$\langle\left[\kG,\, \kG^*\right]\rangle \lhd \kG.$$

\begin{definition}
Let $A_G= \kG / \langle[\kG,\, \kG^*]\rangle.$   
\end{definition}

\bigskip
Thus $A_G$ is the group ring of $G$ with the
added relations that the invariants of $*$ are central.  Given $x\in \kG$ we denote the corresponding element in $A_G$ by $[x]$.  

\begin{lemma}{The action of $*$ is well defined on $A_G$.}\label{lem1}\end{lemma}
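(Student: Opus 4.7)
The plan is to show that the two-sided ideal $I = \langle [\kG, \kG^*] \rangle$ is preserved by the anti-involution $*$, so that $*$ descends to a $\kappa$-linear map on the quotient $A_G$. Since $*$ restricted to $\kG$ squares to the identity, this induced map will automatically be an involution on $A_G$.

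First I would recall that $*$ is a $\kappa$-linear \emph{anti}-involution on $\kG$, because inversion on $G$ satisfies $(gh)^{-1}=h^{-1}g^{-1}$ and extends linearly. In particular, for any $x,y \in \kG$ we have $(xy)^* = y^* x^*$ and hence
\[
(xy - yx)^* = y^*x^* - x^*y^*.
\]
Next I would verify the key computation on the generators of $I$. A generating set (as a two-sided ideal) is $\{ab - ba : a \in \kG, \ b \in \kG^*\}$. For such a generator, using $b^* = b$,
\[
(ab - ba)^* = b^*a^* - a^*b^* = ba^* - a^*b = -(a^*b - ba^*),
\]
which is again a commutator of an element of $\kG$ with an element of $\kG^*$, hence lies in $I$.

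Finally I would promote this to all of $I$. Any element of $I$ is a finite sum $\sum r_i c_i s_i$ with $c_i \in [\kG,\kG^*]$ and $r_i, s_i \in \kG$. Applying $*$ and using the anti-homomorphism property gives
\[
\Bigl(\sum r_i c_i s_i\Bigr)^* = \sum s_i^* c_i^* r_i^*,
\]
and since $c_i^* \in I$ by the previous step and $I$ is two-sided, this sum lies in $I$. Therefore $*(I) \subseteq I$, and applying $*$ once more yields equality. Consequently $*$ descends to a well-defined $\kappa$-linear map $[x] \mapsto [x^*]$ on $A_G$, which squares to the identity, completing the proof.

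I do not anticipate a serious obstacle: the only point that requires care is remembering that $*$ is an \emph{anti}-involution, which is precisely what forces the sign swap in $(ab-ba)^* = -(a^*b - ba^*)$ and makes the image of a generator of $I$ land back among the generators of $I$.
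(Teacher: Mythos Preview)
Your proof is correct and follows essentially the same approach as the paper: both show that a commutator $ab-ba$ with $b\in\kG^*$ has $(ab-ba)^*=ba^*-a^*b\in\langle[\kG,\kG^*]\rangle$, and then conclude that $*$ descends to $A_G$. Your version is slightly more explicit in spelling out the extension from generators to arbitrary elements of the ideal via $(\sum r_ic_is_i)^*=\sum s_i^*c_i^*r_i^*$, a step the paper leaves implicit.
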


\begin{proof}{For $z\in [\kG,\,\kG^*]$, we have $z= ab-ba$ for some $b$ satisfying $b^*=b$.  Thus $$z^* = (ab-ba)^*=b^* a^* - a^*b^* = b a^* - a^* b \in \langle[\kG,\,\kG^*]\rangle.$$  If $[x]=[y]$ then $x-y \in \langle  [\kG,\,\kG^*] \rangle$, so $(x-y)^* \in \langle  [\kG,\,\kG^*] \rangle$ and $[x^*] =[y^*]$.}
\hfill ${}$ \end{proof}

Let $\KG = {A_G}^*$, the invariants of this action.  Let $\Lambda_G$ be the anti-invariants of the action: $\Lambda_G=\{[x]\in A_G\vert\,\,[x]^*=-[x]\}$.  For $x \in \kG$ let: $$\bar{x} = \frac{x+x^*}{2} \in \kG^*,\qquad \vec{x} = \frac{x-x^*}{2}\in\kG,$$
so $[\bar{x}]\in \KG,\,\, [\vec{x}]\in\Lambda_G$.

\begin{lemma}{We have a decomposition of $\kappa$--linear vector spaces{\rm  :} $$A_G = \KG \oplus \Lambda_G.$$}\label{lem2}\end{lemma}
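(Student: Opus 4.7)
The plan is to prove this by the standard averaging argument for involutions on vector spaces over a field of characteristic different from $2$, using Lemma \ref{lem1} to ensure everything is well defined on $A_G$.

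First I would establish that $A_G = \KG + \Lambda_G$. For any $[x]\in A_G$, pick a representative $x\in \kG$ and write
$$x = \frac{x+x^*}{2} + \frac{x-x^*}{2} = \bar{x} + \vec{x}.$$
Applying the quotient map gives $[x] = [\bar{x}] + [\vec{x}]$. A direct check shows $(\bar{x})^* = \bar{x}$ and $(\vec{x})^* = -\vec{x}$ in $\kG$, so by Lemma \ref{lem1} we have $[\bar{x}]^* = [\bar{x}]$ and $[\vec{x}]^* = -[\vec{x}]$, placing these summands in $\KG$ and $\Lambda_G$ respectively.

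Second I would verify the sum is direct by showing $\KG \cap \Lambda_G = 0$. If $[x]$ lies in the intersection, then $[x]^* = [x]$ and $[x]^* = -[x]$, so $2[x] = 0$. Since $\kappa$ has characteristic $0$, we conclude $[x] = 0$.

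There is no real obstacle here; the only subtlety is that the decomposition on the level of $\kG$ only produces well-defined classes in $A_G$ because the $*$-action descends to $A_G$, which is exactly the content of Lemma \ref{lem1}. Everything else is formal from invertibility of $2$ in $\kappa$.
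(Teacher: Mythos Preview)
Your proof is correct and follows essentially the same approach as the paper: the paper also shows $\KG\cap\Lambda_G=0$ from $[x]=[x]^*=-[x]$ and writes any $[y]$ as $[\bar y]+[\vec y]$. Your version simply reverses the order of the two steps and makes explicit the reliance on Lemma~\ref{lem1} and ${\rm char}\,\kappa\ne 2$.
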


\begin{proof}{If $[x]\in \KG \cap \Lambda_G$ then $[x]=[x]^*=-[x]$ so $[x]=0$.  Given any $[y]\in A_G$ we have $[y]=[\bar{y}]+[\vec{y}]$}. \hfill${}$\end{proof}

\begin{lemma}
If $[x] \in \KG$ then $[x]=[\bar{x}]$ and if $[x] \in \Lambda_G$ then $[x]=[\vec{x}]$.
\label{2.=xbar}
\end{lemma}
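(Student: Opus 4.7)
The plan is to use the decomposition from Lemma \ref{lem2} together with the trivial identity $x = \bar{x} + \vec{x}$. Since the map $\kG \to A_G$ is $\kappa$--linear, this gives $[x] = [\bar{x}] + [\vec{x}]$, and by construction $[\bar{x}] \in \KG$ while $[\vec{x}] \in \Lambda_G$, so this is precisely the decomposition of $[x]$ in the direct sum $A_G = \KG \oplus \Lambda_G$.

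Now suppose $[x] \in \KG$. Then $[x]$ also admits the trivial decomposition $[x] = [x] + 0$ with $[x] \in \KG$ and $0 \in \Lambda_G$. Because the sum $\KG \oplus \Lambda_G$ is direct (Lemma \ref{lem2}), the two decompositions must agree, giving $[\bar{x}] = [x]$ and $[\vec{x}] = 0$. The case $[x] \in \Lambda_G$ is symmetric: comparing $[x] = [\bar{x}] + [\vec{x}]$ with $[x] = 0 + [x]$ forces $[\bar{x}] = 0$ and $[\vec{x}] = [x]$.

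There is no real obstacle here; the statement is essentially a bookkeeping corollary of Lemma \ref{lem2}, serving to name the projections onto the two summands as $[x] \mapsto [\bar x]$ and $[x] \mapsto [\vec x]$. The only thing one needs to check, and it has already been checked implicitly when defining $\KG$ and $\Lambda_G$ via the involution on $A_G$, is that $[\bar{x}]^* = [\bar{x}]$ and $[\vec{x}]^* = -[\vec{x}]$, which follows directly from $\bar{x}^* = \bar{x}$ and $\vec{x}^* = -\vec{x}$ in $\kG$ together with Lemma \ref{lem1}.
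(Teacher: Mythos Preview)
Your proof is correct and essentially the same as the paper's. The paper writes $[x]=[\bar{x}]+[\vec{x}]$ and then directly computes $[\vec{x}]=\tfrac{[x]-[x]^*}{2}=0$ from $[x]^*=[x]$ (and symmetrically for the other case), whereas you package the same step as uniqueness of the decomposition in $\KG\oplus\Lambda_G$ via Lemma~\ref{lem2}; these are two phrasings of one argument.
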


\begin{proof}
If $[x] \in \KG$ then $$[x]=[\bar{x}+\vec{x}]=[\bar{x}]+\frac{[x -x^*]}{2}=[\bar{x}]+\frac{[x] -[x]^*}{2}=[\bar{x}].$$ 
If $[x] \in \Lambda_G$ then $$[x]=[\bar{x}+\vec{x}]=\frac{[x +x^*]}{2}+[\vec{x}]=\frac{[x] +[x]^*}{2}+[\vec{x}]=[\vec{x}].$$
\end{proof}

As $\bar{x} \in \kG^*$, we have $[\bar{x}]$ central in $A_G$.  Thus $\KG$ is central in $A_G$.

\begin{lemma}{$\KG$ is a subring of $A_G$ and $\Lambda_G$ is a (right) module over $\KG$.}\label{lem3}\end{lemma}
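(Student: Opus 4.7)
The plan is to exploit two facts already in hand: the involution $*$ on $A_G$ is $\kappa$--linear and \emph{anti}-multiplicative, i.e.\ $(uv)^* = v^*u^*$ for $u,v\in A_G$ (this is inherited from the corresponding identity on $\kG$ via Lemma \ref{lem1}), and $\KG$ has just been observed to be central in $A_G$.

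For the subring statement, closure under sums and the fact that $[1]\in \KG$ are immediate from $\kappa$--linearity of $*$ and from $1^*=1$. The one point that requires a small argument is closure under products: given $[x],[y]\in \KG$, I compute
$$([x][y])^* \;=\; [y]^*[x]^* \;=\; [y][x],$$
and then invoke centrality of $\KG$ to rewrite $[y][x] = [x][y]$, which shows $[x][y]\in\KG$.

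For the module statement, the verification is essentially the same computation with one sign flip. Given $[x]\in \Lambda_G$ and $[k]\in \KG$, I form $[x][k]\in A_G$ and check
$$([x][k])^* \;=\; [k]^*[x]^* \;=\; [k]\bigl(-[x]\bigr) \;=\; -[k][x] \;=\; -[x][k],$$
the last step again by centrality of $\KG$. Hence $[x][k]\in \Lambda_G$, and the remaining module axioms (associativity, distributivity, unitality) are inherited tautologically from the ring structure on $A_G$ together with the subring property of $\KG$ just established.

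The only point that actually requires care is that $*$ is an anti-involution rather than a ring homomorphism, so after applying $*$ to a product one has factors in the wrong order. This is precisely why centrality of $\KG$ (already established from Lemma \ref{2.=xbar} and the fact that $\bar x\in \kG^*$) is needed at both steps; without it, neither closure claim would be automatic.
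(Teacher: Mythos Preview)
Your proof is correct and essentially identical to the paper's own argument: both verify closure of $\KG$ under products via $([x][y])^*=[y]^*[x]^*=[y][x]=[x][y]$, using centrality of $\KG$ for the last step, and then run the same computation with one sign flipped to show $\Lambda_G$ is closed under multiplication by $\KG$. The paper's presentation differs only in notation (it takes $[x]\in\KG,\ [y]\in\Lambda_G$ rather than your $[x]\in\Lambda_G,\ [k]\in\KG$) and omits your explicit remark about the remaining module axioms being inherited from the ambient ring structure.
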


\begin{proof}{ Clearly $1    =     [e]  \in  \KG$ and $\KG$ is closed under addition and subtraction.  If $[x], [y] \in \KG$                                 
 then $[x][y]    \in   \KG$ as $$([x][y])^*   =   [(xy)^*]    =     [y^*x^*]    =     [y]^*[x]^*                 =  [y][x]    =  [x][y].$$   Thus $\KG$ is also closed under multiplication, and therefore a subring of $A_G$.

To see that $\Lambda_G$ is a module over $\KG$, we check that $\Lambda_G$ is closed under multiplication by elements of $\KG$.  To that end let $[x]\in \KG, [y]\in\Lambda_G$.  Then:  
$$([y][x])^*=[(yx)^*]=[x^*y^*]=[x]^*[y]^* =-[x][y]=-[y][x].$$ so $[y][x] \in \Lambda_G$.} 
\end{proof}

\noindent As $\KG$ is central in $A_G$, we know that $\KG$ is a commutative ring.  

\begin{definition}{We define the commutative version of the group ring for the group $G$, over a field $\kappa$, to be $\KG$.} \label{comver}\end{definition}

We note the following identities in $\KG$:
\begin{lemma}{Let $[x],[y],[z] \in A_G$.  Then{\rm :}                                  

\noindent i) $\,\,[\overline{xy}]=[\overline{yx}]$. 
                                 
\noindent ii) $\,2[\bar{y}]\,[\overline{xz}]=[\overline{xyz}]+[\overline{xy^*z}]$, {\rm so in particular} $[\bar{x}][\bar{y}]=\frac12([\overline{xy}]+[\overline{xy^*}])$.
                                 
\noindent iii) $[\overline{x^*}]=[\bar{x}]$.
}
\label{lem10}\end{lemma}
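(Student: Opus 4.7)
Identity (iii) is immediate from the definitions: $\overline{x^*} = \tfrac12(x^* + (x^*)^*) = \tfrac12(x^* + x) = \bar{x}$, so $[\overline{x^*}] = [\bar{x}]$ without any descent into $A_G$. For identity (ii) I plan to exploit the centrality of $[\bar{y}] \in \KG$ in $A_G$ just established before the lemma: centrality lets one insert $\bar{y}$ at any position in a product, so $[\bar{y}\cdot xz] = [x\bar{y}z]$ and $[\bar{y}\cdot z^*x^*] = [z^*\bar{y}x^*]$ inside $A_G$. Expanding
\[
2[\bar{y}][\overline{xz}] = [\bar{y}(xz + z^*x^*)] = [x\bar{y}z] + [z^*\bar{y}x^*]
\]
and substituting $\bar{y} = (y + y^*)/2$ then yields $\tfrac12\big([xyz] + [xy^*z] + [z^*yx^*] + [z^*y^*x^*]\big)$, which is exactly $[\overline{xyz}] + [\overline{xy^*z}]$ because $(xyz)^* = z^*y^*x^*$ and $(xy^*z)^* = z^*yx^*$. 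The ``in particular'' statement is the specialization $z = 1$ of this identity (using $\overline{x \cdot 1} = \bar{x}$).

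Identity (i) is the subtle one. The plan is first to reduce to the case $x = g,\ y = h \in G$ using $\kappa$-bilinearity of $\overline{xy}$ and $\kappa$-linearity of $[\cdot]$. Setting $c(g, h) := [gh] - [hg] \in A_G$, the target $[\overline{gh}] = [\overline{hg}]$ unwinds as
\[
2\big([\overline{gh}] - [\overline{hg}]\big) = \big([gh] - [hg]\big) - \big([g^{-1}h^{-1}] - [h^{-1}g^{-1}]\big) = c(g, h) - c(g^{-1}, h^{-1}),
\]
so it is enough to show $c(g, h) = c(g^{-1}, h^{-1})$ in $A_G$. For this I would apply the centrality of $[g + g^{-1}] = 2[\bar g]$: $[(g + g^{-1})h] = [h(g + g^{-1})]$ rearranges to $c(g, h) = -c(g^{-1}, h)$. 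Dually, the centrality of $[h + h^{-1}]$ gives $c(g, h) = -c(g, h^{-1})$. Composing these two sign flips yields $c(g, h) = c(g^{-1}, h^{-1})$, closing the argument.

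The main obstacle I anticipate is spotting the right strategy for (i). A naive attempt to exhibit $\overline{xy} - \overline{yx}$ directly as a sum of generators of $\langle [\kG, \kG^*] \rangle$ produces the combination $[x - x^*,\,y] + [x^*,\,y - y^*]$ of ring commutators, in which $*$-anti-invariants occupy the slot reserved for invariants; it is not transparent that this lives in the defining ideal. The cleaner route is to descend to $A_G$, restrict to group-element inputs, and harvest two sign flips from the centrality of $[g + g^{-1}]$ and $[h + h^{-1}]$. Once (i) is in hand, (ii) is a short centrality computation and (iii) a definition-chase.
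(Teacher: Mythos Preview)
Your arguments for (ii) and (iii) are correct and essentially identical to the paper's: for (ii) both you and the paper use centrality of $[\bar y]$ to slide it through a product, arriving at $x\bar y z + z^*\bar y x^* = \overline{xyz}+\overline{xy^*z}$; for (iii) both note it is a tautology.

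For (i), your argument is correct but more elaborate than the paper's, and in fact your anticipated obstacle dissolves with a better choice of decomposition. You tried $[x-x^*,\,y]+[x^*,\,y-y^*]$, found anti-invariants in the wrong slot, and retreated to the group-element/sign-flip argument. The paper instead writes
\[
\overline{xy}-\overline{yx}=(\bar x\,y - y\,\bar x)+(\bar y\,x^* - x^*\,\bar y),
\]
which one checks directly by expanding both sides. Here the commutators are $[\bar x,\,y]$ and $[\bar y,\,x^*]$, with the $*$-\emph{invariants} $\bar x,\bar y$ in the distinguished slot, so the whole expression lies in $\langle[\kG,\kG^*]\rangle$ by definition and vanishes in $A_G$. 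This handles arbitrary $x,y\in\kG$ in one line, with no reduction to group elements and no bilinearity step. Your route works, but the paper's decomposition is exactly the ``naive direct'' approach you dismissed---just with $\bar x,\bar y$ in place of $x-x^*,y-y^*$.
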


\begin{proof}{ i) We have: $$\overline{xy}-\overline{yx} = (\bar{x}y-y\bar{x})+(\bar{y}x^*-x^*\bar{y}).$$
As $[\bar{x}],[\bar{y}] \in \KG$ we conclude $[\overline{xy}]-[\overline{yx}]=[0]$.

\bigskip                                 
\noindent  ii) We have $$\overline{xyz}+\overline{xy^*z}= x\bar{y}z + z^* \bar{y}x^*.$$
As $[\bar{y}]\in \KG$ we conclude $[\overline{xyz}]+[\overline{xy^*z}]=2[\bar{y}]\,[\overline{xz}]$. 

\bigskip                                
\noindent  iii) Trivial.}
\hfill${}$\end{proof}

\bigskip
Let $f\colon G \to H$ be a group homomorphism.  Then $f$ extends  $\kappa$--linearly  to a ring homomorphism $f\colon \kG \to \kH$.  We will now show that this induces a ring homomorphism: $$f^\#\colon \KG\to \KH.$$

\begin{lemma}{$f$ induces a well defined ring homomorphism $\hat{f}\colon A_G \to A_H$.}\label{lem4}\end{lemma}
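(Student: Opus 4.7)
The plan is to show that the composition $\kG \xrightarrow{f} \kH \to A_H$ annihilates the ideal $\langle[\kG,\,\kG^*]\rangle$, and hence descends to a well-defined map $\hat{f}\colon A_G \to A_H$; the ring homomorphism property then follows automatically from $f\colon \kG \to \kH$ being one.

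The main ingredient is that $f$ intertwines the involutions: since $f$ is a group homomorphism, $f(g^{-1})=f(g)^{-1}$ for all $g\in G$, and by $\kappa$-linearity this gives $f(x^*)=f(x)^*$ for every $x\in\kG$. In particular $f(\kG^*)\subseteq \kH^*$. First I would use this to check that a typical generator of the ideal is sent into the target ideal: for $a\in \kG$ and $b\in\kG^*$,
\[
f(ab-ba)=f(a)f(b)-f(b)f(a) \in [\kH,\,\kH^*],
\]
since $f(b)\in\kH^*$.

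Next I would extend this from generators to the full two-sided ideal. Any element of $\langle[\kG,\,\kG^*]\rangle$ is a finite sum of terms of the form $c(ab-ba)d$ with $b=b^*$, and applying the ring homomorphism $f$ gives $f(c)\bigl(f(a)f(b)-f(b)f(a)\bigr)f(d)$, which lies in $\langle[\kH,\,\kH^*]\rangle$ by the previous step. Therefore the quotient map $\kH\to A_H$ composed with $f$ kills $\langle[\kG,\,\kG^*]\rangle$, and by the universal property of quotients descends to a well-defined $\kappa$-linear map $\hat{f}\colon A_G\to A_H$ given by $\hat{f}[x]=[f(x)]$.

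Finally, $\hat{f}$ is a ring homomorphism because $\hat{f}([x][y])=[f(xy)]=[f(x)f(y)]=\hat{f}[x]\,\hat{f}[y]$ and $\hat{f}[e]=[e]$. I do not anticipate any obstacle; the only point requiring care is the verification that $f$ commutes with $*$, which rests solely on $f$ being a group homomorphism.
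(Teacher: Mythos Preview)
Your proof is correct and follows essentially the same approach as the paper: both verify that $f$ intertwines the involutions, deduce $f(\kG^*)\subseteq\kH^*$, and conclude that commutators $ab-ba$ with $b\in\kG^*$ map into $[\kH,\kH^*]$, so the map descends to the quotient. Your version is slightly more explicit in extending from generators to the full two-sided ideal and in verifying the ring homomorphism axioms, but the underlying argument is identical.
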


\begin{proof}{From the definition of group homomorphism we know that $( f(x))^*=f(x^*)$ for all $x \in G$ and hence for all $x\in \kG$. Thus if $x\in \kG^*$, then $$( f(x))^*   = f( x^*)  = f(x).$$ Thus $f(x) \in \kH^*$ and if $z                 =  yx -    xy$ for some $y   \in    \kG$, then $$f(z)     =    f(y)f(x)-  f(x)f(y)    \in  [\kH,\,\kH^*].$$  Hence if $[x]=[y]$, then $[f(x)]=[f(y)]$ and we may define $\hat{f}([x])=[f(x)]$.
}
${}$\hfill${}$\end{proof}

Clearly $\hat{f}([x]^*)= (\hat{f}([x]))^*$, so if $[x]\in\KG$ then $$(\hat{f}([x]))^*=\hat{f}([x]^*)=\hat{f}([x])$$ and $\hat{f}([x])\in \KH$.  Thus we may define:

\begin{definition}{For a group homomorphism $f  \colon G \to  H$, we define the map of commutative rings $f^\#  \colon \KG                                                   \to   \KH$ to be the restriction of $\hat{f}$} to $\KG$.\label{func}\end{definition}

By construction  $(fl)^\#=f^\# l^\#$, where $l$ is a group homomorphism $H \to M$, so we have a functor 
${\rm \bf Group}\to{\rm \bf Commutative\, Ring}$, sending a group $G$ to the commutative  ring $\KG$, and the group homomorphism, $f\colon G \to H$ to the ring homomorphism $f^\#\colon \KG \to \KH$.  This functor has the following `half-exactness' property:

\begin{lemma}{If $f\colon G \to H$ is a surjective group homomorphism, then the ring homomorphism $f^\#\colon \KG \to \KH$ is also surjective.}\label{lem5}\end{lemma}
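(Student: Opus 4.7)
The plan is to reduce surjectivity on $\KG$ to surjectivity of the underlying $\kappa$--linear map $f\colon \kG \to \kH$, using the symmetrization operation $x \mapsto \bar{x}$ to land inside the invariants of $*$.

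First I would start with an arbitrary element $[y] \in \KH$. By Lemma \ref{2.=xbar}, $[y] = [\bar{y}]$, so it suffices to hit each class of the form $[\bar{y}]$ for $y \in \kH$. Since $f \colon G \to H$ is surjective as a map of sets, its $\kappa$--linear extension $f \colon \kG \to \kH$ is surjective as a map of $\kappa$--algebras; in particular I can choose some $x \in \kG$ with $f(x) = y$.

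Next, form $\bar{x} = (x + x^*)/2 \in \kG^*$. Since $\bar{x}$ is a $*$--invariant element of $\kG$, its image $[\bar{x}]$ lies in $\KG$. I would then compute, using that $f$ intertwines the $*$ operations on $\kG$ and $\kH$ (as recorded in the proof of Lemma \ref{lem4}):
\[
f^\#([\bar{x}]) \;=\; \hat{f}([\bar{x}]) \;=\; [f(\bar{x})] \;=\; \bigl[\overline{f(x)}\bigr] \;=\; [\bar{y}] \;=\; [y].
\]
This exhibits $[y]$ as lying in the image of $f^\#$, proving surjectivity.

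There is no real obstacle here: the essential content is that $*$ is preserved by $f$, that symmetrization lands in the $*$--invariants, and that every element of $\KH$ is already the class of a symmetric representative. The only small point to verify carefully is the identity $f(\bar{x}) = \overline{f(x)}$, which is immediate from the $\kappa$--linearity of $f$ together with $f(x^*) = f(x)^*$.
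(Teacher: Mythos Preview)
Your proof is correct and essentially identical to the paper's: both lift an element of $\KH$ to $\kG$ via surjectivity of $f$, symmetrize the lift, and verify that $f^\#$ sends the symmetrized class back to the original element using $f(x^*)=f(x)^*$. The only cosmetic difference is that the paper computes $\tfrac12([x]+[x]^*)=[x]$ directly rather than invoking Lemma~\ref{2.=xbar}.
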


We note in passing that the dual statement is not true.  For example let $D_8$ denote the group of symmetries of a square and let $V$ be the subgroup generated by reflections through lines parallel to the edges.   Then the inclusion $\iota\colon V \hookrightarrow D_8$ is an injective group homomorphism, but $\iota^\#$ is not injective.

\bigskip
Proof of Lemma \ref{lem5}:$\,\,\,$ {The ring homomorphism $f\colon \kG \to \kH$ is surjective so the induced map $\hat{f} \colon A_G \to A_H$ is also surjective.  Hence given $[x]\in \KH$ we have $y \in \kG$ with $\hat{f}([y])=[x]$.  Then $$f^\#([\bar{y}])=[f(\bar{y})]=\frac12([f(y)]+[f(y^*)])=\frac12(\hat{f}([y])+ (\hat{f}([y]))^*)=\frac12([x]+[x]^*) =[x]$$ as required. 
}                  \hfill $\Box$
\,\,

\bigskip
For the remainder of this section  $f\colon G \to H$ is a surjective group homomorphism, with kernel $K \lhd G$.  We wish to find generators for the ideal  ker$(f^\#) \lhd\KG$.

\begin{lemma}{As vector spaces over $\kappa$:

\noindent i) $\,\,{\rm  ker}(f\colon \kG\to \kH)$ is generated by the set  $S=\{gk-g \vert\, g \in G, k\in K\}$.
                                 
\noindent ii) $\,{\rm  ker}(\hat{f}\colon A_G \to A_H)$ is generated by the set $S'=\{[gk]-[g] \vert\, g \in G, k\in K\}$.
                                 
\noindent iii) ${\rm  ker}(f^\#\colon \KG  \to  \KH)$ is generated by the set  $S''=\{[\overline{gk}]-[\bar{g}] \vert\, g \in G, k\in K\}$.                                 }

\label{lem6}\end{lemma}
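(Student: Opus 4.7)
The plan is to prove the three parts in order, each building on the previous. Throughout, let $\pi_G \colon \kG \to A_G$ and $\pi_H \colon \kH \to A_H$ denote the quotient maps, so $\hat f \circ \pi_G = \pi_H \circ f$.

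For (i), this is a standard computation. Partition $G$ into $K$-cosets $G = \bigsqcup_j g_j K$. Given $x = \sum_g a_g g \in \ker f$, group the sum by coset and use that $\{f(g_j)\}_j$ is a $\kappa$-basis of $\kH$ to conclude $\sum_{k \in K} a_{g_j k} = 0$ for each $j$. Then within each coset write $\sum_k a_{g_j k}(g_j k) = \sum_k a_{g_j k}(g_j k - g_j)$, displaying $x$ as a $\kappa$-linear combination of elements of $S$.

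For (ii), since $\pi_G$ is surjective and $\hat f \pi_G = \pi_H f$, we have $\ker \hat f = \pi_G(f^{-1}(\ker \pi_H))$. I claim $f^{-1}(\ker \pi_H) = \ker f + \ker \pi_G$ (as $\kappa$-subspaces of $\kG$). The inclusion $\supseteq$ is clear since both summands map to $0$ under $\pi_H f$. For $\subseteq$, note that $\ker \pi_H$ is spanned by commutators $ab - ba$ with $b^* = b$; by Lemma \ref{lem5} the map $f$ restricts to a surjection $\kG^* \twoheadrightarrow \kH^*$, and $f$ itself is surjective, so any such commutator lifts to an element $a'b' - b'a' \in \ker \pi_G$ with $b'^* = b'$. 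Hence every element of $\ker \pi_H$ has a preimage in $\ker \pi_G$, and the claim follows. Applying $\pi_G$ gives $\ker \hat f = \pi_G(\ker f)$, and part (i) yields generators $S'$.

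For (iii), observe first that each element $[\overline{gk}] - [\bar g]$ of $S''$ lies in $\ker f^\#$: decomposing $[\overline{gk}] - [\bar g] = \tfrac12([gk] - [g]) + \tfrac12([k^{-1}g^{-1}] - [g^{-1}])$, the first term is in $S' \subset \ker \hat f$; for the second, set $g' = g^{-1}$ and $k' = gk^{-1}g^{-1} \in K$ (normality of $K$), so that $g'k' = k^{-1}g^{-1}$ and $[k^{-1}g^{-1}] - [g^{-1}] = [g'k'] - [g'] \in S'$. For the reverse containment, take $z \in \ker f^\# = \ker \hat f \cap \KG$. By part (ii) write $z = \sum_i c_i ([g_ik_i] - [g_i])$ with $c_i \in \kappa$. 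Since $z \in \KG$ we have $z = [\bar z]$ (Lemma \ref{2.=xbar}), i.e.\ $z = \tfrac12(z + z^*)$. Applying $*$ termwise, $([g_ik_i] - [g_i])^* = [k_i^{-1}g_i^{-1}] - [g_i^{-1}]$, so
\[
z = \tfrac12 \sum_i c_i\bigl([g_ik_i] + [k_i^{-1}g_i^{-1}] - [g_i] - [g_i^{-1}]\bigr) = \sum_i c_i\bigl([\overline{g_ik_i}] - [\overline{g_i}]\bigr),
\]
displaying $z$ as a $\kappa$-linear combination of $S''$.

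The only slightly delicate step is the identification $f^{-1}(\ker \pi_H) = \ker f + \ker \pi_G$ in (ii), which requires lifting the $*$-invariant factor of each commutator generator to something $*$-invariant in $\kG$; this is exactly where the surjectivity of $f$ on $*$-invariants (established in Lemma \ref{lem5}) is needed. Everything else is bookkeeping with the $*$-averaging decomposition of Lemma \ref{lem2}.
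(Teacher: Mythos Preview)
Your argument is correct. Parts (i) and (iii) are essentially the paper's proof: in (iii) you symmetrise via $z=\tfrac12(z+z^*)$ where the paper decomposes $[gk]-[g]$ into its $\KG$ and $\Lambda_G$ components and then invokes Lemma~\ref{lem2} --- the same projection, phrased two ways.

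Part (ii) is where your route genuinely differs. The paper works on the quotient side: it first checks directly that $\langle S'\rangle_\kappa$ is a two-sided ideal of $A_G$ (closure under multiplication by $[h]$, $h\in G$), so that $A_G/\langle S'\rangle_\kappa$ is a ring; by (i) this ring is a quotient of $\kH$, and one then verifies that the defining relation of $A_H$ (that $*$-invariants are central) already holds there, forcing $A_G/\langle S'\rangle_\kappa\cong A_H$. You instead chase kernels: $\ker\hat f=\pi_G\bigl(f^{-1}(\ker\pi_H)\bigr)$ and $f^{-1}(\ker\pi_H)=\ker f+\ker\pi_G$, hence $\ker\hat f=\pi_G(\ker f)$. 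Both approaches rest on the same observation --- a $*$-invariant in $\kH$ lifts to one in $\kG$ --- but the paper's version also exhibits $\langle S'\rangle_\kappa$ as an ideal, which is convenient later.

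Two small slips in your (ii), neither fatal. First, $\ker\pi_H$ is the two-sided \emph{ideal} generated by commutators $ab-ba$ with $b^*=b$, not merely their $\kappa$-span; the lifting still goes through because $f$ is a surjective ring homomorphism, so a term $c(ab-ba)d$ lifts to $c'(a'b'-b'a')d'\in\ker\pi_G$ with $b'^*=b'$ and $c',d'$ arbitrary lifts. Second, Lemma~\ref{lem5} is about $f^\#$ on the quotients, not about $f$ on $\kG^*$; the surjection $\kG^*\twoheadrightarrow\kH^*$ you actually need is immediate from surjectivity of $f$ alone (given $z\in\kH^*$, take any lift $w$ and note $f(\bar w)=\overline{f(w)}=z$).
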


\begin{proof} { i) In $\kG  / \langle S \rangle \kappa$ there is precisely one way of representing each element of $\kH$.

\bigskip                                                                  
\noindent ii) We know that $ S' \kappa$ is a 2-sided ideal in the ring $A_G$  as   
$$([gk]-[g])[h]=[gh(h^{-1}kh)]-[gh],\qquad [h]([gk]-[g])=[hgk]-[hg]$$ for $g,h \in G,\, k \in K$.  Thus $A_G/ \langle S' \rangle \kappa$ is a ring.
From (i) we know that $A_G/ \langle S' \rangle \kappa$ may be identified with a quotient ring of $\kH$.  

To see that this quotient ring is $A_H$, we check that the additional relation in $A_H$: $$f(x)=( f(x))^* \implies [f(x)]\,\, {\rm central},$$ is  satisfied in $A_G/ \langle S' \rangle \kappa$.  That is we need: $$f(x)= (f(x))^* \implies  [x]\in A_G/ \langle S' \rangle \kappa {\rm \, is\, central}.$$

Note that given $x\in \kG$ with $f(x)=(f(x))^*$, we have $f(\bar{x})=f(x)$. Thus from (i) we have $[x] \sim [\bar{x}]$ in $A_G/ \langle S' \rangle \kappa$ and $[\bar{x}]$ is already central in $A_G$.
     
\bigskip                                                             
\noindent iii) We have $[gk]-[g]=[\overline{gk}]-[\bar{g}] +[\vecl{gk}]-[\vec{g}]$, so by (ii), ker$(\hat{f})$ is generated over $\kappa$  by:                                  
$$ S''  \cup \{[\vecl{gk}]-[\vec{g}] \vert\,\, g \in G,\, k\in K\}.$$  Thus if $[x] \in {\rm ker}(f^\#)$, then $[x]=[y]+[z]$, where $[y] \in  \langle S'' \rangle \kappa$ and $z\in \Lambda_G$.  We have $[z]=[x]-[y]\in \KG$, so by Lemma \ref{lem2} we know $[z]=0$.
}\hfill${}$\end{proof}

For $x \in A_G$, $[\bar{x}] =\frac12([x]+ [x]^*)$, which depends only on $[x]$, so we have a well defined $\kappa$--linear map $m_G\colon A_G \to \KG$ given by $ [x] \mapsto  [\bar{x}]$.

\begin{lemma}{The following maps are linear over $\KG${\rm:}
                                 
\noindent i) The map $A_G \to A_G$ given by $[x] \mapsto [xg]$, for some fixed $g \in G$.
                                 
\noindent ii) The map $m_G\colon A_G \to \KG$.}

\label{lem7}\end{lemma}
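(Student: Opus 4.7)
The plan is to recognise each of the two maps as a natural operation on $A_G$ built from ingredients already shown to interact well with the quotient, and then to derive $\KG$-linearity from the centrality of $\KG$ in $A_G$ (established in the paragraph after Lemma \ref{2.=xbar}) together with the fact that $*$ fixes $\KG$ pointwise.

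For (i), I would observe that the map $[x]\mapsto [xg]$ is nothing other than right multiplication by the element $[g]\in A_G$. This is automatically well defined as soon as one knows that $A_G$ is a ring, which we do. Once the map is recast as $\rho_g\colon [x]\mapsto [x]\,[g]$, verifying $\KG$-linearity amounts to the identity
\[
\rho_g([\kappa]\,[x]) = ([\kappa]\,[x])\,[g] = [\kappa]\,([x]\,[g]) = [\kappa]\,\rho_g([x])
\]
for $[\kappa]\in\KG$ and $[x]\in A_G$, which is simply the associativity of multiplication in $A_G$. (The centrality of $\KG$ is not even needed here; however, it would be needed if one wanted to frame $\rho_g$ as a map of left $\KG$-modules rather than right, and we invoke it in (ii).)

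For (ii), I would expand $m_G$ using its explicit description $m_G([x]) = \tfrac{1}{2}\bigl([x]+[x]^*\bigr)$, which is well defined by Lemma \ref{lem1}. For $[\kappa]\in\KG$ and $[x]\in A_G$, I would compute
\[
m_G([\kappa]\,[x]) \;=\; \tfrac{1}{2}\bigl([\kappa]\,[x] + ([\kappa]\,[x])^*\bigr) \;=\; \tfrac{1}{2}\bigl([\kappa]\,[x] + [x]^*[\kappa]^*\bigr).
\]
Because $[\kappa]\in\KG$ we have $[\kappa]^* = [\kappa]$, and because $\KG$ is central in $A_G$ we may rewrite $[x]^*[\kappa] = [\kappa]\,[x]^*$. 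Combining these gives
\[
m_G([\kappa]\,[x]) \;=\; [\kappa]\cdot \tfrac{1}{2}\bigl([x] + [x]^*\bigr) \;=\; [\kappa]\, m_G([x]),
\]
which is the required $\KG$-linearity.

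There is no real obstacle: the lemma is essentially formal bookkeeping, and all the substantive content has already been packaged into the centrality of $\KG$ and the descent of $*$ to $A_G$. The only point worth flagging is that (i) uses only the ring structure on $A_G$, whereas (ii) genuinely needs both the $*$-invariance of $\KG$ and its centrality, since the involution exchanges the two factors in a product before we can recombine.
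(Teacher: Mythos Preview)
Your proposal is correct and matches the paper's proof essentially line for line: the paper dispatches (i) with the single remark that multiplication by a ring element is always linear over a central subring, and handles (ii) by the same computation $m_G([x][y])=\tfrac12([x][y]+([x][y])^*)=\tfrac12([x]+[x]^*)[y]=m_G([x])[y]$, using $[y]^*=[y]$ and centrality exactly as you do. The only cosmetic difference is that the paper writes the scalar on the right rather than the left, which is immaterial since $\KG$ is central.
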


\begin{proof}{i) Multiplication by a ring element is always linear over a central subring.                                 

\noindent
ii) Let $[x]\in A_G$ and $[y] \in \KG$.  We must check that $m_G([x][y])=m_G([x])[y]$: 
                                 $$m_G([x][y])=\frac12([x][y]+([x][y])^*)= \frac12([x]+[x]^*)[y]= m_G([x])[y].$$
}
\end{proof}

Let $B \subset A_G$ be a generating set for $A_G$ as a module over $\KG$.

\begin{lemma}{Given $l\in G$,  the ideal  in $\KG$ generated by $\{[\overline{bl}]                 -                 [\bar{b}] \vert\, [b]                  \in                  B\}$ contains $[\overline{xl}]-[\bar{x}]$ for any $x \in A_G$.}

\label{lem8}\end{lemma}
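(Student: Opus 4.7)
The plan is to observe that the assignment $[x] \mapsto [\overline{xl}] - [\bar{x}]$ is a $\KG$--linear map from $A_G$ to $\KG$, and then to write an arbitrary $[x] \in A_G$ as a $\KG$--combination of elements of $B$ to conclude.

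First I would establish the linearity. Lemma \ref{lem7}(i) says right multiplication by $l$ is a $\KG$--linear endomorphism of $A_G$, and Lemma \ref{lem7}(ii) says $m_G \colon A_G \to \KG$ (which sends $[y] \mapsto [\bar{y}]$) is $\KG$--linear. Composing these gives a $\KG$--linear map $A_G \to \KG$ sending $[x] \mapsto m_G([xl]) = [\overline{xl}]$. Subtracting $m_G$ itself (another $\KG$--linear map) yields the $\KG$--linear map
\[
\Phi \colon A_G \to \KG, \qquad [x] \mapsto [\overline{xl}] - [\bar{x}].
\]

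Next, since $B$ generates $A_G$ as a $\KG$--module, any $[x] \in A_G$ can be written as $[x] = \sum_i [k_i][b_i]$ with $[k_i] \in \KG$ and $[b_i] \in B$. By $\KG$--linearity,
\[
[\overline{xl}] - [\bar{x}] = \Phi([x]) = \sum_i [k_i]\,\Phi([b_i]) = \sum_i [k_i]\bigl([\overline{b_i l}] - [\overline{b_i}]\bigr),
\]
which manifestly lies in the ideal of $\KG$ generated by $\{[\overline{bl}] - [\bar{b}] : [b] \in B\}$.

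There is essentially no obstacle here; the entire lemma is a formal consequence of the $\KG$--linearity supplied by Lemma \ref{lem7} together with the definition of $B$ as a generating set for $A_G$ over $\KG$. The only thing to take care of is to invoke $\KG$--linearity rather than, say, $\kappa$--linearity, since we need the coefficients $[k_i]$ to pass through both the right multiplication by $l$ and the bar operation without disturbing them, and this is precisely what centrality of $\KG$ in $A_G$ guarantees.
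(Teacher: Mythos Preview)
Your proof is correct and essentially identical to the paper's own argument: the paper defines the $\KG$--linear map $t_l\colon [y]\mapsto [\overline{yl}]-[\bar{y}]$ via Lemma~\ref{lem7}, expresses $[x]$ as a $\KG$--combination of elements of $B$, and applies $t_l$. Your exposition is slightly more explicit in decomposing $t_l$ as a composite using Lemma~\ref{lem7}(i) and (ii), but the idea is the same.
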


\begin{proof} By Lemma \ref{lem7} we have a linear map $t_l\colon A_G \to \KG$ given by: $$[y] \mapsto [\overline{yl}]-[\bar{y}].$$  We may  express $[x]$  as a sum $[x]=[b_1][y_1]+\cdots +[b_r][y_r]$ with  the  $[b_i]\in B, [y_i] \in \KG$.  Then applying  $t_l$ to both sides gives  $$[\overline{xl}]-[\bar{x}]=([\overline{b_1l}]-[\bar{b_1}])[y_1]+\cdots+([\overline{b_rl}]-[\bar{b_r}])[y_r].$$

\end{proof}

\begin{definition}{
Given a set $L \subset G$, we define $L^\#\lhd \KG$ to be the ideal generated by $\{[\overline{bl}]-[\bar{b}] \vert\, [b] \in B, l\in L\}$. }\label{gen}\end{definition}

By Lemma                  \ref{lem8}, $L^\#$ is independent of the choice of generating set $B$.  In fact we could take a different generating set of $A_G$, $B_l$ for each $l\in L$ and would still have:

\begin{lemma}{The ideal $L^\#\lhd \KG$ is generated by $\{[\overline{bl}]-[\bar{b}] \vert\,  l\in L,[b] \in B_l\}$.}
\label{lem9}\end{lemma}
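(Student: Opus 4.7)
The plan is to observe that this is essentially a symmetric application of Lemma \ref{lem8}, run in both directions. Let $J \lhd \KG$ denote the ideal generated by the set $\{[\overline{bl}]-[\bar{b}] \vert\, l\in L,\,[b]\in B_l\}$; we aim to show $J = L^\#$, where $L^\#$ is the ideal from Definition \ref{gen} built from the fixed generating set $B$.

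First I would prove $L^\# \subseteq J$. Fix $l\in L$. By Lemma \ref{lem8} applied to the generating set $B_l$, the ideal in $\KG$ generated by $\{[\overline{b'l}]-[\bar{b'}] \vert\, [b']\in B_l\}$ contains $[\overline{xl}]-[\bar{x}]$ for every $x \in A_G$. Specialising $x = b$ for each $[b] \in B$ shows that every generator of $L^\#$ coming from this particular $l$ lies in $J$. Taking the union over $l \in L$ then gives $L^\# \subseteq J$.

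Next I would prove the reverse inclusion $J \subseteq L^\#$ by the symmetric argument: fix $l \in L$, apply Lemma \ref{lem8} to the generating set $B$, and specialise $x = b'$ for $[b'] \in B_l$. This puts each generator $[\overline{b'l}]-[\bar{b'}]$ of $J$ into $L^\#$, and again taking the union over $l \in L$ yields $J \subseteq L^\#$.

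There is no real obstacle; the only thing to be careful about is the quantifier structure, since we use a potentially different generating set $B_l$ for each $l$, and Lemma \ref{lem8} must be invoked separately for each $l$ before taking the union of the resulting inclusions over all of $L$.
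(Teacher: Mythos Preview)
Your argument is correct and is exactly the intended one: the paper does not give a separate proof of this lemma, but simply remarks (just before the statement) that it follows from Lemma~\ref{lem8}, and your two symmetric applications of Lemma~\ref{lem8} spell out precisely that implication.
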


\begin{theorem}{If the set $L\subset K$ normally generates $K$, then  ${\rm ker}(f^\#)=L^\#$.}\label{thm:norm}\end{theorem}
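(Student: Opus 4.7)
The plan is to show containment in both directions. The inclusion $L^\# \subseteq \ker(f^\#)$ is immediate: each generator $[\overline{bl}] - [\bar{b}]$ of $L^\#$ (with $l \in L \subset K$) is killed by $f^\#$, since $f^\#([\overline{bl}]) = [\overline{f(b)f(l)}] = [\overline{f(b)}] = f^\#([\bar{b}])$ as $f(l) = e$.

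For the reverse inclusion $\ker(f^\#) \subseteq L^\#$, by Lemma \ref{lem6}(iii) it suffices to show that $[\overline{gk}] - [\bar{g}] \in L^\#$ for every $g \in G$ and every $k \in K$. I would introduce the set
$$N = \{k \in G \mid [\overline{gk}] - [\bar{g}] \in L^\# \text{ for all } g \in G\}$$
and prove that $N$ is a normal subgroup of $G$ containing $L$. Since $L$ normally generates $K$, this would force $K \subseteq N$ and complete the argument.

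The containment $L \subseteq N$ is just Lemma \ref{lem8} applied with $x = g$. Closure of $N$ under multiplication follows from the telescoping identity
$$[\overline{gk_1k_2}] - [\bar{g}] = \bigl([\overline{(gk_1)k_2}] - [\overline{gk_1}]\bigr) + \bigl([\overline{gk_1}] - [\bar{g}]\bigr),$$
with each summand in $L^\#$ by hypothesis. Closure under conjugation is handled by Lemma \ref{lem10}(i): given $k \in N$ and $h \in G$,
$$[\overline{g(hkh^{-1})}] - [\bar{g}] = [\overline{(h^{-1}gh)k}] - [\overline{h^{-1}gh}] \in L^\#.$$
The step I expect to be the trickiest is showing closure under inverses. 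Here I would apply Lemma \ref{lem10}(ii) with $x = g$, $y = k$, $z = 1$ (using that $k^* = k^{-1}$ for a group element) to obtain
$$[\overline{gk^{-1}}] - [\bar{g}] = 2([\bar{k}] - 1)[\bar{g}] - \bigl([\overline{gk}] - [\bar{g}]\bigr).$$
The second summand lies in $L^\#$ because $k \in N$; the first lies in $L^\#$ because $[\bar{k}] - 1 = [\overline{1 \cdot k}] - [\bar{1}]$ is a specific instance of the defining generators of $N$, obtained by taking $g = 1$ in the hypothesis $k \in N$. Once closure under inverses is established, $N$ is a subgroup, normal by the conjugation step, and contains $L$, hence contains $K$ by normal generation. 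This yields the desired inclusion and completes the proof.
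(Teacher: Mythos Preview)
Your proposal is correct and follows essentially the same structure as the paper's proof: define the set of $k$ for which $[\overline{gk}]-[\bar g]\in L^\#$ for all $g$, show it contains $L$, is closed under multiplication and conjugation (via the same telescoping identity and Lemma~\ref{lem10}i), and closed under inverses, then invoke Lemma~\ref{lem6}iii. The only difference is in the inverse step: the paper observes more directly that $[\overline{gk^{-1}}]-[\bar g]=-\bigl([\overline{(gk^{-1})k}]-[\overline{gk^{-1}}]\bigr)$, which lies in $L^\#$ simply by applying the defining property of $k$ with $g$ replaced by $gk^{-1}$, avoiding your appeal to Lemma~\ref{lem10}ii.
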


\begin{proof}{As $L\subset K$ it is clear that $L^\#\subset{\rm ker}(f^\#)$.  Let $K'$ denote the set of $k \in K$ such that for all $g \in G$, $[\overline{gk}]-[\bar{g}] \in L^\#$.  By Lemma                  \ref{lem8} we have that $L \subset K'$.  Clearly $[\bar{g}]-[\bar{g}]=0 \in L^\#$ for all $g \in G$, so $e \in K'$.

Suppose $k_1, k_2 \in K'$.  Then for all $g \in G$ we have $$ [\overline{gk_1k_2}]-[\bar{g}]= ([\overline{(gk_1)k_2}]                 -                 [\overline{gk_1}])+([\overline{gk_1}]                 -                 [\bar{g}])  \in L^\#.$$   Thus $k_1k_2 \in K'$.

Also if $k\in K'$, then for all $g \in G$ we have  $$[\overline{gk^{-1}}] - [\bar{g}]                  =                  -( [\overline{(gk^{-1})k}] - [\overline{gk^{-1}}]) \in L^\#.$$  Thus $k^{-1} \in K'$.
                                 
                  Let $h\in G$ and suppose $k\in K'$.  Then for all $g \in G$, using Lemma \ref{lem10}i  we have $$[\overline{g(hkh^{-1}})]-[\bar{g}]=[\overline{(h^{-1}gh)k}]-[\overline{h^{-1}gh}] \in L^\#.$$   Thus $hkh^{-1} \in K'$.
                                 
                   So $K'$ contains $L$ and the identity, and is closed under multiplication, inverse and conjugation.  As $L$ normally generates $K$, we have $K'=K$.  So $L^\#$ contains $[\overline{gk}]-[\bar{g}]$ for all $g\in G$ and $k \in K$.  Thus by Lemma                  \ref{lem6}iii we obtain $L^\#={\rm ker}(f^\#)$.
}\hfill${}$\end{proof}

Theorem \ref{thm:norm} fails if we drop the condition that $f$ is a surjective group homomorphism.  For example, if we let $f\colon V \to D_8$ be the inclusion mentioned before, then we may take $L=\phi$ the empty set, to normally generate ${\rm ker}(f)$ (as $f$ is injective).  However ker$(f^\#)$ is non-zero in this case.

Theorem \ref{thm:norm} is at the heart of our approach.  If $K$ is normally generated by $N \subset K$ and we wish to show that some other set $L\subset K$ does not normally generate $K$, then by Theorem              \ref{thm:norm}, $N^\#= {\rm ker}(f^\#)$ and it is enough to show that $L^\# \neq N^\#$.

For this purpose, it will sometimes be more convenient to use a refinement of $L^\#$.  Let $B$ now be a generating set for the module $\Lambda_G$ over $\KG$.  Note that by Lemma \ref{lem2}, the set $B \sqcup \{1\}$ generates the module $A_G$ over $\KG$. 

\begin{definition}
Given a set $L \subset G$ we define $L^{\#\#} \lhd \KG$ to be the ideal generated  by $\{[\barl{bl}]-[\barl{bl^{-1}}]\vert \,\,[b]\in B, \, l \in L\}$.\label{defhashhash}
\end{definition}

\begin{lemma}{Given $l\in G$,  the ideal  in $\KG$ generated by $\{[\overline{bl}]                 -                 [\barl{bl^{-1}}] \vert\, [b]                  \in                  B\}$ contains $[\overline{xl}]-[\barl{x^{-1}}]$ for any $[x] \in A_G$.}

\label{2.genx}\end{lemma}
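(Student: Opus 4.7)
The plan is to mimic the proof of Lemma \ref{lem8}, now using that $B$ generates only $\Lambda_G$ over $\KG$ together with the orthogonal decomposition $A_G = \KG \oplus \Lambda_G$ from Lemma \ref{lem2}. First I would introduce the $\kappa$--linear map $s_l\colon A_G \to \KG$ defined by $s_l([x]) = [\overline{xl}] - [\overline{xl^{-1}}]$. Writing $s_l = m_G \circ R_l - m_G \circ R_{l^{-1}}$, where $R_g$ denotes right multiplication by $g \in G$ in $A_G$, Lemma \ref{lem7}(i) shows that each $R_g$ is $\KG$--linear and Lemma \ref{lem7}(ii) shows that $m_G$ is $\KG$--linear, so $s_l$ is $\KG$--linear.

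Second, I would verify that $s_l$ vanishes on the central subring $\KG \subset A_G$. For $[u]\in\KG$, the centrality of $[u]$ together with the $\KG$--linearity of $m_G$ gives $[\overline{ul}] = m_G([l][u]) = m_G([l])[u] = [\bar l][u]$, and similarly $[\overline{ul^{-1}}] = [\overline{l^{-1}}][u]$. Since $\overline{l^{-1}} = \tfrac12(l^{-1}+l) = \bar l$, we conclude $s_l([u]) = 0$.

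Third, for an arbitrary $[x] \in A_G$, I would use Lemma \ref{lem2} to write $[x] = [\bar x] + [\vec x]$ with $[\bar x] \in \KG$ and $[\vec x]\in\Lambda_G$, and then use the fact that $B$ generates $\Lambda_G$ to write $[\vec x] = \sum_i [b_i][y_i]$ with $[b_i]\in B$ and $[y_i]\in\KG$. Applying $s_l$ and using its vanishing on $\KG$, $$s_l([x]) \;=\; \sum_i s_l([b_i])[y_i] \;=\; \sum_i \bigl([\overline{b_i l}] - [\overline{b_i l^{-1}}]\bigr)[y_i],$$ which plainly lies in the ideal described in the statement.

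The only step requiring any real thought is the vanishing of $s_l$ on $\KG$, and once one notices the identity $\overline{l^{-1}} = \bar l$ this is immediate from the $\KG$--linearity of $m_G$. Everything else is a direct adaptation of the argument used for Lemma \ref{lem8}.
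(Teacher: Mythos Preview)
Your proof is correct and follows essentially the same route as the paper. The paper writes $[x]$ as a $\KG$--linear combination of $B\sqcup\{[1]\}$ and observes that the contribution from $[1]$ vanishes because $[\barl{1l}]-[\barl{1l^{-1}}]=[\bar l]-[\barl{l^{-1}}]=0$ (Lemma~\ref{lem10}iii); you instead split $[x]=[\bar x]+[\vec x]$ and verify that $s_l$ kills all of $\KG$, which amounts to the same thing since $\KG = [1]\cdot\KG$.
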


\begin{proof}
  We proceed  as in Lemma  \ref{lem8} by expressing $[x]$ as a linear combination of the elements of $B \sqcup \{[1]\}$ and hence express $[\overline{xl}]-[\barl{xl^{-1}}]$ as a linear combination of the  $\{[\overline{bl}]                 -                 [\barl{bl^{-1}}] \vert\, [b]                  \in                  B\}$ and $[\barl{1l}]-[\barl{1l^{-1}}]$ which equals $0$ by Lemma \ref{lem10}iii.
\end{proof}

 Thus $L^{\#\#}$ is independent of the choice of $B$, a generating set for $\Lambda_G$.  Indeed, as before one may chose a different generating set $B_l$ for each $l \in L$ and still have $L^{\#\#}$ generated by $\{[\overline{bl}]-[\barl{bl^{-1}}] \vert\, l\in L, [b] \in B_l\}$

\begin{theorem}
If $L$ normally generates a normal subgroup $K \lhd G$, then $L^{\#\#}=K^{\#\#}$.
\label{hashhash}
\end{theorem}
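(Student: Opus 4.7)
My plan is to follow the template of the proof of Theorem \ref{thm:norm}. The inclusion $L^{\#\#}\subseteq K^{\#\#}$ is immediate, since every defining generator of $L^{\#\#}$ is a generator of $K^{\#\#}$ (as $L\subseteq K$). For the reverse inclusion I define
\[
K'=\bigl\{k\in K : [\overline{xk}]-[\overline{xk^{-1}}]\in L^{\#\#}\text{ for all }[x]\in A_G\bigr\},
\]
and aim to show $K'=K$ by verifying that $K'$ is a subgroup of $G$ containing $L$ and closed under conjugation by $G$. The inclusion $L\subseteq K'$ follows from Lemma \ref{2.genx}, and $e\in K'$ is clear.

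A convenient preliminary simplification: by Lemma \ref{lem10}(iii) and cyclic invariance (Lemma \ref{lem10}(i)),
\[
[\overline{xk^{-1}}]=[\overline{(xk^{-1})^*}]=[\overline{kx^*}]=[\overline{x^*k}],
\]
so $[\overline{xk}]-[\overline{xk^{-1}}]=[\overline{(x-x^*)k}]=2[\overline{\vec{x}\,k}]$. Consequently $K'$ may equally be described as $\{k\in K:[\overline{yk}]\in L^{\#\#}\text{ for every }[y]\in\Lambda_G\}$. Closure of $K'$ under inversion is then immediate since $[\overline{yk^{-1}}]=-[\overline{yk}]$ for $[y]\in\Lambda_G$. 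Closure under conjugation proceeds as in Theorem \ref{thm:norm}: for $k\in K'$, $h\in G$ and $[y]\in\Lambda_G$, Lemma \ref{lem10}(i) gives $[\overline{yhkh^{-1}}]=[\overline{(h^{-1}yh)k}]$, and $(h^{-1}yh)^*=-h^{-1}yh$, so the right-hand side lies in $L^{\#\#}$.

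The main obstacle is closure under multiplication, since these antisymmetric generators do not telescope as cleanly as the ones in Theorem \ref{thm:norm}. Given $k_1,k_2\in K'$ and $[y]\in\Lambda_G$, I plan to set $z=yk_1\in\kG$, decompose $[z]=[\bar z]+[\vec z]\in\KG\oplus\Lambda_G$ via Lemma \ref{lem2}, and invoke the $\KG$-linearity of $m_G$ (Lemma \ref{lem7}(ii)) together with the centrality of $[\bar z]$ to split
\[
[\overline{yk_1k_2}]=m_G([z][k_2])=[\bar z]\,[\overline{k_2}]+[\overline{\vec{z}\,k_2}].
\]
The second summand lies in $L^{\#\#}$ because $[\vec z]\in\Lambda_G$ and $k_2\in K'$. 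The first lies in $L^{\#\#}$ because $[\bar z]=[\overline{yk_1}]\in L^{\#\#}$ (using $[y]\in\Lambda_G$ and $k_1\in K'$) and $L^{\#\#}$ is an ideal. Hence $k_1k_2\in K'$, and $K'$ is a normal subgroup of $G$ containing $L$; so $K\subseteq K'$, giving $K'=K$. Specialising $[y]$ to $[b]\in B$ then shows every generator of $K^{\#\#}$ lies in $L^{\#\#}$, completing the proof.
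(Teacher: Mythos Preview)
Your proof is correct and follows the same overall template as the paper: define $K'$, show it contains $L$ and the identity, and verify closure under inversion, conjugation, and multiplication. The conjugation and inversion steps are handled as in the paper.

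Your treatment of the multiplication step differs in a useful way. The paper works directly with the generators $[\overline{gk}]-[\overline{gk^{-1}}]$ and, given $j,k\in K'$, invokes Lemma~\ref{lem10}(ii) to expand $[\bar k]\bigl([\overline{gj}]-[\overline{gj^{-1}}]\bigr)$, then introduces the auxiliary element $z=\tfrac12(gj+j^{-1}g)$ to obtain
\[
[\overline{g(jk)}]-[\overline{g(jk)^{-1}}]=[\bar k]\bigl([\overline{gj}]-[\overline{gj^{-1}}]\bigr)+\bigl([\overline{zk}]-[\overline{zk^{-1}}]\bigr).
\]
You instead first observe the identity $[\overline{xk}]-[\overline{xk^{-1}}]=2[\overline{\vec x\,k}]$, so that membership in $K'$ becomes the condition $[\overline{yk}]\in L^{\#\#}$ for all $[y]\in\Lambda_G$. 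The multiplication step then reduces to the $\KG\oplus\Lambda_G$ decomposition of $[yk_1]$ together with $\KG$--linearity of $m_G$, avoiding the explicit Lemma~\ref{lem10}(ii) computation. Both arguments are short; yours makes the role of $\Lambda_G$ more transparent and connects more directly with the description of $L^{\#\#}$ via dot products in Corollary~\ref{dotishashhash}.
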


\begin{proof}
This proof follows the structure of the proof of Theorem \ref{thm:norm}.   Let $K'$ denote the set of $k \in K$ such that for all $g \in G$, $[\overline{gk}]-[\barl{gk^{-1}}] \in L^{\#\#}$.  By Lemma                  \ref{2.genx} we have that $L \subset K'$.  As before it is clear that $e \in K'$ and that $K'$ is closed under taking inverses.  In order to show that $K'=K$ and hence prove the theorem, it remains to show that $K'$ is closed under group multiplication and conjugation.

Given $g,h \in G$ and $k\in k'$ from Lemma \ref{lem10}i we again have: $$
[\overline{g(hkh^{-1})}]-[\barl{g(hk^{-1}h^{-1})}]=[\overline{(h^{-1}gh)k}]-[\barl{(h^{-1}gh)k^{-1}}]\in L^{\#\#}.
$$
Thus $K'$ is closed under conjugation.

Given $g \in G$ and $j,k \in K'$,  we may expand $[\barl{k}]\, ([\barl{gj}]-[\barl{gj^{-1}}])$ using  Lemma \ref{lem10}ii:$$
[\barl{k}]\, ([\barl{gj}]-[\barl{gj^{-1}}])=\frac{[\barl{gjk}]+[\barl{gjk^{-1}}]}2 -\frac{[\barl{gkj^{-1}}]+[\barl{gk^{-1}j^{-1}}]}2.
$$
Set $z=\frac{gj+j^{-1}g}2$.  Then using Lemma \ref{lem10}i we obtain:

$$
[\barl{g(jk)}]-[\barl{g(jk)^{-1}}]=[\barl{k}]\, ([\barl{gj}]-[\barl{gj^{-1}}])+[\barl{zk}]-[\barl{zk^{-1}}]\in L^{\#\#}.
$$
Thus $K'$ is closed under multiplication.

\end{proof}

Thus given a normal subgroup $K\lhd G$, normally genrated by a set of elements $N \subset K$, we may use Theorem \ref{hashhash} to show that $K$ is not normally generated by a set $L\subset K$.  All one need do is show that $L^{\#\#}\neq N^{\#\#}$.  In Section \ref{cyc} we apply this method to provide an alternative proof of a result due to Boyer \cite{Howi}.  In Section \ref{Scott} we apply this method to the Scott-Wiegold conjecture \cite{Boye}.  Before that, we must be able to compute the commutative ring $\KG$ and the relevant ideals $L^{\#\#}$.  We build up the theory to do that in Section \ref{iden} and Section \ref{desc}.

Finally in this section, we will comment on the relationship between $L^\#$ and $L^{\#\#}$.  Given a set $L\subset G$, one might naively generalize from the case of the usual group ring, and consider  $L^\bullet \lhd \KG$ to be the relevant ideal in relation to the normal closure of $L$ (rather than $L^\#$ or $L^{\#\#}$), where $L^\bullet$ is the ideal generated by $\{1-[\bar{l}]\vert\,\,l\in L\}$.  These three ideals are related as follows:

\begin{lemma}
We have $L^\#= L^{\#\#} \stackrel\cdot+ L^\bullet$.
\end{lemma}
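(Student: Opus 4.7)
The plan is to prove the two containments $L^\# \supseteq L^{\#\#} + L^\bullet$ and $L^\# \subseteq L^{\#\#} + L^\bullet$ separately, exploiting the splitting $A_G = \KG \oplus \Lambda_G$ from Lemma $\ref{lem2}$ together with the identity $[\overline{bl}] + [\overline{bl^{-1}}] = 2[\bar{b}][\bar{l}]$ from Lemma $\ref{lem10}$(ii).

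For $L^{\#\#} + L^\bullet \subseteq L^\#$, I would first observe that each generator $1 - [\bar{l}]$ of $L^\bullet$ lies in $L^\#$: applying Lemma $\ref{lem8}$ with $x = l^{-1}$ gives $[\overline{l^{-1}l}] - [\overline{l^{-1}}] = 1 - [\bar{l}]$ (using Lemma $\ref{lem10}$(iii) for $[\overline{l^{-1}}] = [\bar{l}]$). Each generator $[\overline{bl}] - [\overline{bl^{-1}}]$ of $L^{\#\#}$ then splits as $([\overline{bl}] - [\bar{b}]) - ([\overline{bl^{-1}}] - [\bar{b}])$; the first bracket is a direct generator of $L^\#$, and Lemma $\ref{lem8}$ applied with $x = bl^{-1}$ shows the second bracket is also in $L^\#$.

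For the reverse containment $L^\# \subseteq L^{\#\#} + L^\bullet$, the key move is to exploit Lemma $\ref{lem8}$'s assertion that $L^\#$ is independent of the generating set $B$ of $A_G$ over $\KG$ that defines it. I take $B = B_0 \cup \{[1]\}$, where $B_0$ is a generating set of $\Lambda_G$ over $\KG$ (so $B$ does span $A_G$ by Lemma $\ref{lem2}$), and using Lemma $\ref{2.=xbar}$ I pick each $b \in B_0$ so that its representative in $\kG$ satisfies $b^* = -b$, giving $\bar{b} = 0$ in $\kG$. With this choice, the generator of $L^\#$ coming from $[1]$ is $[\bar{l}] - 1 \in L^\bullet$, while for $b \in B_0$ Lemma $\ref{lem10}$(ii) gives $[\overline{bl}] + [\overline{bl^{-1}}] = 2[\bar{b}][\bar{l}] = 0$, so $[\overline{bl}] - [\bar{b}] = [\overline{bl}] = \tfrac{1}{2}([\overline{bl}] - [\overline{bl^{-1}}]) \in L^{\#\#}$.

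The main obstacle is spotting that a suitable choice of generating set, combined with $\ast$-antiinvariant representatives from Lemma $\ref{2.=xbar}$ and the symmetry identity of Lemma $\ref{lem10}$(ii), causes each generator of $L^\#$ to fall entirely into $L^\bullet$ or entirely into $L^{\#\#}$. If the symbol $\stackrel{\cdot}{+}$ is intended to denote an internal direct sum, verifying $L^{\#\#} \cap L^\bullet = 0$ would be an additional step; but the equality of ideals follows from the two containments above.
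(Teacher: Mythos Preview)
Your proof is correct. The paper's proof of the nontrivial inclusion $L^\# \subseteq L^{\#\#}+L^\bullet$ is even more direct: for arbitrary $g\in G$ and $l\in L$, Lemma~\ref{lem10}(ii) gives the single identity
\[
[\overline{gl}]-[\bar g]\;=\;\tfrac12\bigl([\overline{gl}]-[\overline{gl^{-1}}]\bigr)\;+\;[\bar g]\bigl([\bar l]-1\bigr),
\]
so every generator of $L^\#$ decomposes on the spot as a sum of an element of $L^{\#\#}$ (via Lemma~\ref{2.genx}) and an element of $L^\bullet$. Your argument is essentially this same identity, specialised by choosing the generating set $B=B_0\cup\{1\}$ with $\ast$-antiinvariant representatives so that one of the two terms always vanishes; the paper simply keeps both terms and avoids the preliminary choice of $B$. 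You also supply the reverse inclusion $L^{\#\#}+L^\bullet\subseteq L^\#$, which the paper leaves implicit. The symbol $\stackrel\cdot+$ in the paper denotes only the sum of ideals, so no directness claim needs verifying.
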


\begin{proof}
Let $g\in G$ and $l \in L$.  Applying Lemma \ref{lem10}ii we get:$$
[\barl{gl}]-[\bar{g}]=\frac{[\barl{gl}]-[\barl{gl^{-1}}]}2+[\bar{g}]([\bar{l}]-1)\in  L^{\#\#} \stackrel\cdot+ L^\bullet.$$
\end{proof}

 \section{Identities in $A_G$}\label{iden}

In order to implement our approach, we need to have a description of the commutative ring $\KG$ and we need a generating set $B$ for $A_G$ as a module over $\KG$.  Then given $K \lhd G$, a set $N \subset K$, which normally generates $K$, and another set $L \subset K$, we would have generators for the ideals $N^\#, L^\#$ (resp. $N^{\#\#}, L^{\#\#}$).   Working out whether or not  $L^\# = N^\#$  (resp. $N^{\#\#}=L^{\#\#}$)is then purely a problem in commutative ring theory.  If then $L^\# \neq N^\#$ (or $N^{\#\#}\neq L^{\#\#}$), we may conclude that $L$ does not normally generate $K$.

In \S\ref{desc} we will show in general how to compute $\KG$ and $B$, from a presentation of $G$.  However to do this we first need to build up a collection of identities which hold in $A_G$.  We will now drop the square parentheses which we have used to distinguish elements of $\kG$ and elements of $A_G$.  Thus from now on, given $x\in \kG$ it will be understood that $x$ may also denote $[x]\in A_G$ depending on context.

By Lemma \ref{2.=xbar}, we know that if $x \in \KG$ then $x=\bar{x}$, and if $y\in \Lambda_G$ then $y=\vec{y}$ (noting dropped parentheses).  Conversely for all $x\in A_G$ we have $\bar{x}\in\KG, \, \vec{x}\in\Lambda_G$.  So we may denote a general element of $\KG$ by $\bar{x}$ and a general element of $\Lambda_G$ by $\vec{x}$.

\begin{lemma}{Given $\vec{x},\vec{y} \in \Lambda_G$ we have, $\vec{x}\vec{y}+\vec{y}\vec{x} \in \KG$, $\vec{x}\vec{y}-\vec{y}\vec{x} \in \Lambda_G$.}
\label{lem11}\end{lemma}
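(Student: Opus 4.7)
The plan is to use the fact that the involution $*$ is an antihomomorphism on $A_G$ (inherited from $\kG$ via Lemma \ref{lem1}), together with the defining property of $\Lambda_G$, namely that $\vec{x}^* = -\vec{x}$ and $\vec{y}^* = -\vec{y}$. Everything will then fall out of one two-line calculation for each of the two claims.

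First, I would record the key computation: since $(ab)^* = b^*a^*$ holds on $\kG$ and descends to $A_G$, we get
\[
(\vec{x}\vec{y})^* \;=\; \vec{y}^*\vec{x}^* \;=\; (-\vec{y})(-\vec{x}) \;=\; \vec{y}\vec{x}.
\]
From this, both statements are immediate: applying $*$ to $\vec{x}\vec{y} + \vec{y}\vec{x}$ gives $\vec{y}\vec{x} + \vec{x}\vec{y}$, which equals the original element, placing it in $\KG$; and applying $*$ to $\vec{x}\vec{y} - \vec{y}\vec{x}$ gives $\vec{y}\vec{x} - \vec{x}\vec{y}$, which is the negative of the original element, placing it in $\Lambda_G$.

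There is really no obstacle here: the only subtlety is making sure $*$ is well defined on $A_G$ and still acts as an antihomomorphism, but both facts are already established in Lemma \ref{lem1} and its proof. The lemma can therefore be dispatched with essentially the single identity $(\vec{x}\vec{y})^* = \vec{y}\vec{x}$, followed by reading off the symmetric and antisymmetric combinations.
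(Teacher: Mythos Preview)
Your proof is correct and follows essentially the same approach as the paper: both hinge on the single computation $(\vec{x}\vec{y})^* = \vec{y}^*\vec{x}^* = \vec{y}\vec{x}$, from which the symmetric and antisymmetric combinations are immediately seen to lie in $\KG$ and $\Lambda_G$ respectively.
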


\begin{proof}{We need only note that $(\vec{x}\vec{y})^*={\vec{y}}^{\,*}{\vec{x}}^{\,*}=\vec{y}\vec{x}$
and similarly $(\vec{y}\vec{x})^*=\vec{x}\vec{y}$.}
\hfill${}$\end{proof}

\begin{definition}{We define a dot product $\Lambda_G \times \Lambda_G \to \KG$ and bracket $\Lambda_G \times \Lambda_G \to \Lambda_G${\rm:} $$\vec{x} \cdot \vec{y}=-\frac12(\vec{x}\vec{y}+\vec{y}\vec{x}),\qquad[\vec{x}, \vec{y}]=\frac12(\vec{x}\vec{y}-\vec{y}\vec{x}).$$}\label{dot}\end{definition}

\begin{lemma}{ For $\vec{x},\vec{y},\vec{z}\in \Lambda_G$ we have{\rm :} 
                                 
\noindent i) $\,\,\,\,\,\vec{x}\cdot\vec{y}= -\frac12(\barl{xy}-\barl{x y^*})=\bar{x}\bar{y}-\barl{xy}$,
                                 
\noindent ii) $\,\,[\vec{x},\vec{y}]=\frac12(xy-yx)$ {\rm (so in particular $xy-yx \in \Lambda_G$)},
                                 
\noindent iii) $[\vec{x},\vec{y}]\cdot\vec{z}=-\frac12(\barl{xyz}-\barl{zyx})$.
}
\label{lem12}\end{lemma}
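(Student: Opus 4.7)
For (i), I would substitute $\vec x=(x-x^*)/2$ and $\vec y=(y-y^*)/2$ into $\vec x\vec y+\vec y\vec x$ and group the eight resulting monomials into the conjugate pairs $\{xy,y^*x^*\}$, $\{yx,x^*y^*\}$, $\{xy^*,yx^*\}$, $\{x^*y,y^*x\}$. Each pair contributes $2\overline{(\cdot)}$, and Lemma~\ref{lem10}(i) collapses $\overline{yx}=\overline{xy}$ and $\overline{x^*y}=\overline{yx^*}=\overline{xy^*}$ (using also the trivial $\overline{w}=\overline{w^*}$ coming straight from $\bar w=(w+w^*)/2$). This yields $\vec x\vec y+\vec y\vec x=\overline{xy}-\overline{xy^*}$ and hence the first equality. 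The second equality then rearranges the identity $\bar x\bar y=\tfrac12(\overline{xy}+\overline{xy^*})$ supplied by Lemma~\ref{lem10}(ii).

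For (ii), decompose $x=\bar x+\vec x$ and $y=\bar y+\vec y$ using Lemma~\ref{lem2}. Because $\bar x,\bar y\in\KG$ are central in $A_G$, every term in the expansion of $xy-yx$ involving a bar factor cancels in pairs, leaving $xy-yx=\vec x\vec y-\vec y\vec x=2[\vec x,\vec y]$. In particular $xy-yx\in\Lambda_G$ (also observable directly from Lemma~\ref{lem11}).

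For (iii), the natural move is to apply (i) to the pair $[\vec x,\vec y]\in\Lambda_G$ and $\vec z\in\Lambda_G$, noting that by (ii) the element $\tfrac12(xy-yx)$ is already anti-invariant and so serves as its own $\vec{(\cdot)}$. This gives
$$[\vec x,\vec y]\cdot\vec z=-\tfrac14\bigl(\overline{(xy-yx)z}-\overline{(xy-yx)z^*}\bigr)=-\tfrac14\bigl(\overline{xyz}-\overline{zyx}-\overline{xyz^*}+\overline{yxz^*}\bigr),$$
where I have used $\overline{yxz}=\overline{zyx}$ (two cyclic applications of Lemma~\ref{lem10}(i)). Matching this against the claimed $-\tfrac12(\overline{xyz}-\overline{zyx})$ reduces the lemma to the single relation $\overline{yxz^*}-\overline{xyz^*}=\overline{xyz}-\overline{zyx}$.

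This last step is the main obstacle, as it is the only part that genuinely invokes the ``toggling'' identity of Lemma~\ref{lem10}(ii). To handle it I would first cycle the starred letter into the middle using Lemma~\ref{lem10}(i), rewriting the left hand side as $\overline{xz^*y}-\overline{yz^*x}$. Then Lemma~\ref{lem10}(ii), applied with $z^*$ playing the role of the central toggling variable, gives $\overline{xz^*y}=2\bar z\,\overline{xy}-\overline{xzy}$ and $\overline{yz^*x}=2\bar z\,\overline{yx}-\overline{yzx}$. The $2\bar z\,\overline{xy}$ terms cancel (since $\overline{xy}=\overline{yx}$), and the cyclic rewrites $\overline{yzx}=\overline{xyz}$, $\overline{xzy}=\overline{zyx}$ deliver precisely the required identity $\overline{xyz}-\overline{zyx}$.
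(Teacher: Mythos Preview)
Your argument is correct. Parts (i) and (ii) coincide with the paper's proof.

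For part (iii) you take a different route. The paper, like you, begins with
\[
[\vec x,\vec y]\cdot\vec z=-\tfrac14\bigl(\overline{(xy-yx)z}-\overline{(xy-yx)z^*}\bigr),
\]
but then disposes of the starred term in one stroke: since $(xy-yx)^*=yx-xy$ by (ii), Lemma~\ref{lem10}(iii) gives
\[
\overline{(xy-yx)z^*}=\overline{\bigl((xy-yx)z^*\bigr)^*}=\overline{z(yx-xy)},
\]
so that the expression becomes $-\tfrac14\bigl(\overline{(xy-yx)z}+\overline{z(xy-yx)}\bigr)$, and two applications of Lemma~\ref{lem10}(i) finish. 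Your approach instead keeps $z^*$ visible, cycles it into the middle slot, and removes it via the toggling identity of Lemma~\ref{lem10}(ii). Both are valid; the paper's trick of conjugating the entire product is shorter and avoids invoking Lemma~\ref{lem10}(ii) at all in this part, while your route shows explicitly that the ``main obstacle'' you flagged is in fact not an obstacle even without that observation.
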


\begin{proof}{i)  We have 
\begin{eqnarray*}\vec{x}\cdot\vec{y}&=& -\frac18\left((x-x^*)(y-y^*)+(y-y^*)(x-x^*)\right)\\&=&-\frac14(\barl{xy}+\barl{yx}-\barl{x y^*}-\barl{y^*x}\,)= -\frac12({\barl{xy}-\barl{x y^*}} ).
\end{eqnarray*}
using Lemma \ref{lem10}i.  Then Lemma \ref{lem10}ii gives: $$-\frac12({\barl{xy}-\barl{x y^*}} ) =  -\frac12(2\barl{xy}-2\bar{x}\bar{y}) =\bar{x}\bar{y}-\barl{xy}.$$
                                                                  
\noindent ii) As $\bar{x},\,\bar{y}$ are central in $A_G$, we have:
$$
\vec{x}\vec{y}-\vec{y}\vec{x}=(x-\bar{x})(y-\bar{y})-(y-\bar{y})(x-\bar{x})=xy-yx.
$$
 
%\begin{eqnarray*}
%\vec{x}\vec{y}-xy = \frac14((x-x^*)(y-y^*)-4xy)=\frac{1}{2}(\bar{x}y^*-2x\bar{y}-\bar{x}y) \\ =\frac{1}{2}(y^*\bar{x}-2\bar{y}x- y\bar{x})=  \frac14((y-y^*)(x-x^*)-4yx)=\vec{y}\vec{x}-yx,
%\end{eqnarray*} as $\bar{x},\bar{y}$ are central in $A_G$.  Thus $\vec{x}\vec{y}-\vec{y}\vec{x}=xy-yx$.
                                                                  
\noindent iii) From (i) and (ii) we know that: $$[\vec{x},\vec{y}]\cdot\vec{z}=-\frac14(\barl{(xy-yx)z}-\barl{(xy-yx)z^*}). $$ Also from (ii) we know that $(xy-yx)^*=yx-xy$, so by Lemma                  \ref{lem10}iii: $$[\vec{x},\vec{y}]\cdot\vec{z}=-\frac14(\barl{(xy-yx)z}-\barl{z(yx-xy)})=-\frac12 (\barl{xyz}-\barl{zyx}).$$
}
\end{proof}

Recall Defintion \ref{defhashhash}.  Let $B$ be a generating set for the module $\Lambda_G$ over the ring $\KG$.  From Lemma  \ref{lem12}i it immediately follows that:

\begin{corollary}
Given a set $L \subset G$, we have the ideal $L^{\#\#} \lhd \KG$ generated by $\{\vec{b}\cdot\vec{l} \vert\,\, \vec{b}\in B,\, l\in L\}$. \label{dotishashhash}
\end{corollary}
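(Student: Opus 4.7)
The plan is to observe that the corollary is a direct rewriting of Definition \ref{defhashhash} using the identity supplied by Lemma \ref{lem12}(i). Essentially, we simply have to check that the two proposed generating sets for $L^{\#\#}$ differ only by a unit of $\kappa$, so they cut out the same ideal in $\KG$.

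First I would unpack the substitution. Elements $b\in B$ lie in $\Lambda_G$, so by Lemma \ref{2.=xbar} we have $b = \vec{b}$ and $b^{*} = -b$. Elements $l\in L$ lie in $G\subset\kG$, so $l^{*} = l^{-1}$ and $\vec{l} = \tfrac12(l-l^{-1})\in\Lambda_G$. Now Lemma \ref{lem12}(i), applied with $x = b$ and $y = l$, yields
\[
\vec{b}\cdot\vec{l} \;=\; -\tfrac12\bigl(\barl{bl} - \barl{bl^{*}}\bigr) \;=\; -\tfrac12\bigl(\barl{bl} - \barl{bl^{-1}}\bigr).
\]

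Thus each generator in the set $\{\,\barl{bl}-\barl{bl^{-1}} \mid b\in B,\,l\in L\,\}$ of Definition \ref{defhashhash} equals $-2$ times the corresponding $\vec{b}\cdot\vec{l}$, and vice versa. Since $\kappa$ has characteristic $0$, the scalar $-2$ is a unit, so the two sets generate the same ideal in $\KG$. This identifies $L^{\#\#}$ with the ideal generated by $\{\vec{b}\cdot\vec{l} \mid \vec{b}\in B,\, l\in L\}$, as claimed.

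There is no real obstacle here; the only thing to be careful about is the notational convention (introduced just before Lemma \ref{lem11}) that $[x]$ is written simply as $x$, so that $b$ in Definition \ref{defhashhash} is the same element as $\vec{b}$ in the corollary, and to confirm that Lemma \ref{lem12}(i) is being applied with the right choice of arguments.
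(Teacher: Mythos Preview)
Your proof is correct and follows the same approach as the paper, which states only that the corollary follows immediately from Lemma~\ref{lem12}(i). You have simply made explicit the substitution and the observation that $-2$ is a unit in $\kappa$.
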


Recall from Lemma                  \ref{lem3} that $\Lambda_G$ is a module over $\KG$.

\begin{lemma}{i) The dot product and the bracket are bilinear maps over $\KG$.
                                 
\noindent ii) The dot product is symmetric and the bracket is skew-symmetric.
                                 
\noindent iii) The bracket obeys the Jacobi identity: $[[\vec{x},\vec{y}],\vec{z}]+ [[\vec{y},\vec{z}],\vec{x}]+[[\vec{z},\vec{x}],\vec{y}]=0$. 
                                 
\noindent iv) The scalar triple product $(\vec{x},\vec{y},\vec{z}) \mapsto[\vec{x},\vec{y}]\cdot\vec{z}$, is an alternating trilinear map. 
                                 
\noindent v) The triple bracket may be expanded as follows{\rm :} $$ [[\vec{x},\vec{y}],\vec{z}]=(\vec{x}\cdot\vec{z})\vec{y}-(\vec{y}\cdot\vec{z})\vec{x}.$$
}
\label{lem13}\end{lemma}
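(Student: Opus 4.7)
The plan is to dispatch parts (i)--(iv) by formal manipulations from the definitions and earlier lemmas, and then tackle the triple bracket formula in (v) by a direct monomial expansion. For (i), multiplication in $A_G$ is $\kappa$-bilinear, and by Lemma~\ref{lem3} $\KG$ lies in the centre of $A_G$, so $\vec{x}\vec{y}\pm\vec{y}\vec{x}$ is $\KG$-bilinear in each argument; the factor of $\tfrac12$ and the sign are cosmetic. Part (ii) is read straight off Definition~\ref{dot}. For (iii), the bracket is half of the associative commutator, and the Jacobi identity for the associative commutator holds in any associative algebra: expanding $[[\vec{x},\vec{y}],\vec{z}]+[[\vec{y},\vec{z}],\vec{x}]+[[\vec{z},\vec{x}],\vec{y}]$ produces twelve signed monomials that cancel in pairs, and the $\tfrac12$ scaling is harmless.

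For (iv), I would use Lemma~\ref{lem12}(iii), which rewrites the scalar triple product as $-\tfrac12(\barl{xyz}-\barl{zyx})$. Trilinearity over $\KG$ follows from (i) via this formula. A transposition of $\vec{x}$ and $\vec{y}$ introduces a sign by the skew-symmetry from (ii), while cyclic invariance under $(\vec{x},\vec{y},\vec{z})\mapsto(\vec{y},\vec{z},\vec{x})$ follows because $\barl{xyz}=\barl{yzx}$ and $\barl{zyx}=\barl{xzy}$ by Lemma~\ref{lem10}(i), so the two-term expression is unchanged. A transposition together with a $3$-cycle generates $S_3$ with the correct signs, yielding the full alternating property.

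Part (v) is the crux. The approach is to expand both sides into monomials in $\vec{x},\vec{y},\vec{z}$. On the left, the definitions give $[[\vec{x},\vec{y}],\vec{z}]=\tfrac14(\vec{x}\vec{y}\vec{z}-\vec{y}\vec{x}\vec{z}-\vec{z}\vec{x}\vec{y}+\vec{z}\vec{y}\vec{x})$. For the right-hand side, I would exploit that $\vec{x}\cdot\vec{z}\in\KG$ is central (Lemma~\ref{lem3}), so $(\vec{x}\cdot\vec{z})\vec{y}$ admits two equal expansions---placing $\vec{y}$ on either side of $-\tfrac12(\vec{x}\vec{z}+\vec{z}\vec{x})$---whose average is
$$(\vec{x}\cdot\vec{z})\vec{y}=-\tfrac14\bigl(\vec{x}\vec{z}\vec{y}+\vec{z}\vec{x}\vec{y}+\vec{y}\vec{x}\vec{z}+\vec{y}\vec{z}\vec{x}\bigr),$$
and symmetrically for $(\vec{y}\cdot\vec{z})\vec{x}$. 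In the subtraction the two $\vec{x}\vec{z}\vec{y}$ monomials and the two $\vec{y}\vec{z}\vec{x}$ monomials cancel, leaving exactly the four-term combination displayed above. This bookkeeping---which is the one place the defining centrality relation of $A_G$ (via Lemma~\ref{lem11}) actually gets used---is the main obstacle; everything in (i)--(iv) is formal manipulation once the centrality of $\KG$ is in hand.
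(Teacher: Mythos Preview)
Your proposal is correct and follows essentially the same approach as the paper: parts (i)--(iii) are disposed of identically, (iv) via the formula of Lemma~\ref{lem12}(iii), and (v) by expanding both sides into monomials in $\vec{x},\vec{y},\vec{z}$. The only cosmetic difference is in the bookkeeping of (v): rather than your symmetric averaging, the paper places $\vec{y}$ on the right of $(\vec{x}\cdot\vec{z})$ but $\vec{x}$ on the \emph{left} of $(\vec{y}\cdot\vec{z})$, obtaining the two-term expression $\tfrac12(\vec{x}\vec{y}\vec{z}-\vec{z}\vec{x}\vec{y})$, and then invokes Lemma~\ref{lem11} once more to kill the residual difference $\tfrac14\bigl(\vec{z}(\vec{x}\vec{y}+\vec{y}\vec{x})-(\vec{x}\vec{y}+\vec{y}\vec{x})\vec{z}\bigr)$.
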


\begin{proof} i) This follows from the centrality of $\KG$ in $A_G$.

\noindent ii) This is immediate from the definitions.

\noindent iii) This follows from the form of the definition and may be verified by calculation.

\noindent iv) Consider the form of the scalar triple product given by Lemma                  \ref{lem12}iii.  If any pair of $\vec{x},\vec{y},\vec{z}$ are equal then applying Lemma                  \ref{lem10}i if necessary, we get  $[\vec{x},\vec{y}]\cdot\vec{z}=0$. 

\bigskip                                                                   
\noindent v) We have $$
[[\vec{x},\vec{y}],\vec{z}]=\frac14\left( \vec{x}\vec{y}\vec{z}-\vec{y}\vec{x}\vec{z}-\vec{z}\vec{x}\vec{y}+\vec{z}\vec{y}\vec{x}   \right).
$$
On the other hand: $$(\vec{x}\cdot\vec{z})\vec{y}-(\vec{y}\cdot\vec{z})\vec{x}=-\frac12 ((\vec{x}\vec{z}+\vec{z}\vec{x})\vec{y}-\vec{x}(\vec{y}\vec{z}+\vec{z}\vec{y})) =\frac12 (\vec{x}\vec{y}\vec{z}-
\vec{z}\vec{x}\vec{y}).$$                                    
Thus taking the difference gives: 
$$
[[\vec{x},\vec{y}],\vec{z}]-((\vec{x}\cdot\vec{z})\vec{y}-(\vec{y}\cdot\vec{z})\vec{x})
=\frac14\left(
-(\vec{x}\vec{y}+\vec{y}\vec{x})\vec{z}+\vec{z}(\vec{x}\vec{y}+\vec{y}\vec{x})
\right)=0,
$$
as by Lemma \ref{lem11} we have $\vec{x}\vec{y}+\vec{y}\vec{x}\in \KG$ central.

%So $(\vec{x}\cdot\vec{z})\vec{y}-(\vec{y}\cdot\vec{z})\vec{x}- [[\vec{x},\vec{y}],\vec{z}]=
%\frac12 (\vec{x}\vec{y}\vec{z}-\vec{z}\vec{x}\vec{y}) -\frac14(\vec{x}\vec{y}\vec{z}-\vec{y}\vec{x}\vec{z} -
%\vec{z}\vec{x}\vec{y} + \vec{z}\vec{y}\vec{x})\\=\frac14(\vec{x}\vec{y}\vec{z}+\vec{y}\vec{x}\vec{z} -
%\vec{z}\vec{x}\vec{y} - \vec{z}\vec{y}\vec{x}) = \frac12 (\vec{z}(\vec{x}\cdot\vec{y})-(\vec{x}\cdot\vec{y}) \vec{z})=0$,  by Lemma  \ref{lem11}.

\end{proof}

\begin{corollary}{We may deduce that given $\vec{x},\vec{y},\vec{z},\vec{w} \in \Lambda_G$ we have{\rm :} 
                                 
\noindent i) $\,\,\,\,[\vec{x},\vec{y}]\cdot\vec{z}= [\vec{y},\vec{z}]\cdot\vec{x}=[\vec{z},\vec{x}]\cdot\vec{y}$.
                                 
\noindent ii) $\,\,\,[\vec{x},\vec{y}] \cdot [\vec{z},\vec{w}] = 
(\vec{x}\cdot \vec{z})(\vec{y}\cdot \vec{w})-(\vec{x}\cdot \vec{w})(\vec{y}\cdot \vec{z})$.
                                 
\noindent iii) $[[\vec{x},\vec{y}] , [\vec{z},\vec{w}]]=([\vec{x},\vec{z}] \cdot\vec{w})\vec{y}- ([\vec{y},\vec{z}] \cdot\vec{w})\vec{x}$

$\,\qquad\qquad\qquad=-([\vec{x},\vec{y}] \cdot\vec{z})\vec{w}+ ([\vec{x},\vec{y}] \cdot\vec{w})\vec{z}$ .

\noindent iv) $([\vec{x},\vec{y}]\cdot\vec{z})\vec{w}=(\vec{x}\cdot\vec{w})[\vec{y},\vec{z}] -(\vec{y}\cdot\vec{w})[\vec{x},\vec{z}]
+(\vec{z}\cdot\vec{w})[\vec{x},\vec{y}]$.
                                 
\noindent v)  $\,\,\,[[\vec{x},\vec{y}] , [\vec{z},\vec{w}]]=(\vec{x}\cdot \vec{z})[\vec{y},\vec{w}]+ (\vec{y}\cdot \vec{w})[\vec{x},\vec{z}]
-(\vec{x}\cdot \vec{w})[\vec{y},\vec{z}]- (\vec{y}\cdot \vec{z})[\vec{x},\vec{w}]$.
}
\label{cor3}\end{corollary}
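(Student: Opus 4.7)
The plan is to derive all five identities algebraically from Lemma \ref{lem13}, mainly using the alternating property (iv) and the triple-bracket expansion (v), together with bilinearity and symmetry of the dot product. For part (i), the scalar triple product $[\vec{x},\vec{y}]\cdot\vec{z}$ is alternating trilinear by Lemma \ref{lem13}(iv), so it is antisymmetric under any transposition of its arguments. A cyclic permutation on three letters is a product of two transpositions, hence the form is invariant under cyclic permutations, which is exactly (i). For (ii), apply the cyclic symmetry of (i) with $\vec{a} = [\vec{z},\vec{w}] \in \Lambda_G$ playing the role of a single vector: $[\vec{x},\vec{y}]\cdot[\vec{z},\vec{w}] = [[\vec{z},\vec{w}],\vec{x}]\cdot\vec{y}$. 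Now expand $[[\vec{z},\vec{w}],\vec{x}]$ via Lemma \ref{lem13}(v) as $(\vec{z}\cdot\vec{x})\vec{w} - (\vec{w}\cdot\vec{x})\vec{z}$, take its dot product with $\vec{y}$, and symmetrize the dot products to obtain (ii).

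For (iii), apply Lemma \ref{lem13}(v) directly with third argument $[\vec{z},\vec{w}]$ to get $[[\vec{x},\vec{y}],[\vec{z},\vec{w}]] = (\vec{x}\cdot[\vec{z},\vec{w}])\vec{y} - (\vec{y}\cdot[\vec{z},\vec{w}])\vec{x}$. Combining symmetry of the dot product with identity (i) gives $\vec{x}\cdot[\vec{z},\vec{w}] = [\vec{z},\vec{w}]\cdot\vec{x} = [\vec{x},\vec{z}]\cdot\vec{w}$, and similarly for the other term, yielding the first form. The second form is then obtained by applying the first form to $[[\vec{z},\vec{w}],[\vec{x},\vec{y}]]$ and using antisymmetry of the bracket. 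For (iv), pull the scalars into the bracket by bilinearity over $\KG$:
$$(\vec{x}\cdot\vec{w})[\vec{y},\vec{z}] - (\vec{y}\cdot\vec{w})[\vec{x},\vec{z}] = \bigl[(\vec{x}\cdot\vec{w})\vec{y} - (\vec{y}\cdot\vec{w})\vec{x},\; \vec{z}\bigr].$$
By Lemma \ref{lem13}(v) read backwards, the first slot equals $[[\vec{x},\vec{y}],\vec{w}]$. A second application of Lemma \ref{lem13}(v) to $[[[\vec{x},\vec{y}],\vec{w}],\vec{z}]$ produces $([\vec{x},\vec{y}]\cdot\vec{z})\vec{w} - (\vec{w}\cdot\vec{z})[\vec{x},\vec{y}]$, and the extra $(\vec{z}\cdot\vec{w})[\vec{x},\vec{y}]$ summand in (iv) cancels this last term by symmetry of the dot, leaving precisely $([\vec{x},\vec{y}]\cdot\vec{z})\vec{w}$, which is (iv).

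Identity (v) is then obtained by substituting (iv) into the first form of (iii). Each of the two scalar-vector products $([\vec{x},\vec{z}]\cdot\vec{w})\vec{y}$ and $([\vec{y},\vec{z}]\cdot\vec{w})\vec{x}$ expands via (iv) into a sum of three scalar-bracket terms; the two $(\vec{x}\cdot\vec{y})[\vec{z},\vec{w}]$ contributions cancel by symmetry of the dot product, and the four surviving terms reassemble into the right-hand side of (v). There is no serious obstacle here: all five parts are essentially algebraic bookkeeping. The only point requiring care is organising the order so that (iii) can cite (i), (iv) is a self-contained computation, and (v) can cite both (iii) and (iv); conceptually, Lemma \ref{lem13}(v) is playing the role of the BAC--CAB identity from vector calculus, and everything else cascades from it.
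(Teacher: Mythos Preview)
Your proposal is correct and follows essentially the same route as the paper: parts (i)--(iv) are handled identically (alternating trilinearity for (i), cycling plus Lemma~\ref{lem13}v for (ii) and (iii), and the two expansions of $[[[\vec{x},\vec{y}],\vec{w}],\vec{z}]$ for (iv)). The only cosmetic difference is in (v): you substitute (iv) into the \emph{first} form of (iii) and cancel the two $(\vec{x}\cdot\vec{y})[\vec{z},\vec{w}]$ terms, whereas the paper substitutes (iv) into the \emph{second} form of (iii) and cancels the two $(\vec{z}\cdot\vec{w})[\vec{x},\vec{y}]$ terms---both computations are equally short and yield the same result.
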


\begin{proof}{i) As 3-cycles  are even permutations, this follows from Lemma \ref{lem13}iv.

\bigskip                                 
\noindent ii) From (i) we have $[\vec{x},\vec{y}] \cdot [\vec{z},\vec{w}] =  [[\vec{z},\vec{w}],\vec{x}]\cdot \vec{y}$. Then Lemma                  \ref{lem13}v gives $$ [[\vec{z},\vec{w}],\vec{x}]\cdot \vec{y}= ((\vec{z}\cdot\vec{x})\vec{w}-(\vec{w}\cdot\vec{x})\vec{z})\cdot \vec{y}= (\vec{x}\cdot \vec{z})(\vec{y}\cdot \vec{w})-(\vec{x}\cdot \vec{w})(\vec{y}\cdot \vec{z}).$$
                                                                   
\noindent iii) We expand using Lemma                  \ref{lem13}v, taking $[\vec{z},\vec{w}]$ as an input to get the first identity, and $[\vec{x},\vec{y}]$ as an input to get the second.
 
\bigskip                                                                  
\noindent iv) We use Lemma                  \ref{lem13}v to expand the quadruple product $[[[\vec{x},\vec{y}],\vec{w}],\vec{z}]$ in two ways:
\begin{eqnarray*}
{} [[[\vec{x},\vec{y}],\vec{w}],\vec{z}]=  ([\vec{x},\vec{y}]\cdot\vec{z})\vec{w}-(\vec{z}\cdot\vec{w})[\vec{x},\vec{y}],\\
{} [[[\vec{x},\vec{y}],\vec{w}],\vec{z}]= (\vec{x}\cdot\vec{w})[\vec{y},\vec{z}] -(\vec{y}\cdot\vec{w})[\vec{x},\vec{z}].\\
\end{eqnarray*}
Equating the two yields the result.

\bigskip                                                                  
\noindent v) From (iii) we have $[[\vec{x},\vec{y}] , [\vec{z},\vec{w}]]=-([\vec{x},\vec{y}] \cdot\vec{z})\vec{w}+ ([\vec{x},\vec{y}] \cdot\vec{w})\vec{z}$ .  We may then use (iv) to substitute in for the expressions $([\vec{x},\vec{y}] \cdot\vec{z})\vec{w}$ and $ ([\vec{x},\vec{y}] \cdot\vec{w})\vec{z}$. 
}\hfill${}$\end{proof}

We can recover multiplication in $A_G$ from the module structure of $\Lambda_G$ over the ring $\KG$, together with the dot product and bracket:

\begin{lemma}{Given $x,y \in A_G$ we may express their product in the following terms{\rm :}  $$xy=\bar{x}\bar{y}-\vec{x}\cdot \vec{y}+\bar{x}\vec{y}+\bar{y}\vec{x}+[\vec{x},\vec{y}]$$ Thus in particular $\barl{xy}=\bar{x}\bar{y}-\vec{x}\cdot\vec{y}$ and $\vecl{xy}= \bar{x}\vec{y}+\bar{y}\vec{x}+[\vec{x},\vec{y}]$.}
\label{lem15}\end{lemma}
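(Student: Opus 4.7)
The plan is to prove the first identity by a direct expansion, using the decomposition $A_G = \KG \oplus \Lambda_G$ of Lemma \ref{lem2}, and then deduce the formulas for $\overline{xy}$ and $\vec{xy}$ by uniqueness of that decomposition.

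First I would write $x = \bar{x} + \vec{x}$ and $y = \bar{y} + \vec{y}$, which is justified by Lemma \ref{2.=xbar} and Lemma \ref{lem2}. Expanding the product gives
$$xy = \bar{x}\bar{y} + \bar{x}\vec{y} + \vec{x}\bar{y} + \vec{x}\vec{y}.$$
Since $\bar{y}\in\KG$ is central in $A_G$ by the remark following Lemma \ref{2.=xbar}, the middle term rearranges as $\vec{x}\bar{y} = \bar{y}\vec{x}$. It remains to handle the troublesome term $\vec{x}\vec{y}$. Adding the defining identities of Definition \ref{dot},
$$-\vec{x}\cdot\vec{y} = \tfrac{1}{2}(\vec{x}\vec{y} + \vec{y}\vec{x}),\qquad [\vec{x},\vec{y}] = \tfrac{1}{2}(\vec{x}\vec{y} - \vec{y}\vec{x}),$$
yields the clean formula $\vec{x}\vec{y} = -\vec{x}\cdot\vec{y} + [\vec{x},\vec{y}]$. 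Substituting back gives the claimed expression for $xy$.

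For the second assertion, I would observe that in the expression
$$xy = (\bar{x}\bar{y} - \vec{x}\cdot\vec{y}) + (\bar{x}\vec{y} + \bar{y}\vec{x} + [\vec{x},\vec{y}]),$$
the first bracketed sum lies in $\KG$: $\bar{x}\bar{y}\in\KG$ because $\KG$ is a subring of $A_G$ (Lemma \ref{lem3}), and $\vec{x}\cdot\vec{y}\in\KG$ by Definition \ref{dot} combined with Lemma \ref{lem11}. The second bracketed sum lies in $\Lambda_G$: the terms $\bar{x}\vec{y}$ and $\bar{y}\vec{x}$ are in $\Lambda_G$ because $\Lambda_G$ is a module over $\KG$ (Lemma \ref{lem3}), while $[\vec{x},\vec{y}]\in\Lambda_G$ by Lemma \ref{lem11}. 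The uniqueness of the decomposition $A_G = \KG \oplus \Lambda_G$ (Lemma \ref{lem2}) then forces $\overline{xy} = \bar{x}\bar{y} - \vec{x}\cdot\vec{y}$ and $\vec{xy} = \bar{x}\vec{y} + \bar{y}\vec{x} + [\vec{x},\vec{y}]$.

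There is no real obstacle here; the argument is essentially bookkeeping. The only subtlety worth double-checking is the sign in $\vec{x}\vec{y} = -\vec{x}\cdot\vec{y} + [\vec{x},\vec{y}]$, which comes directly from the minus sign baked into Definition \ref{dot}, and the verification that each summand lands in the correct factor of the decomposition, which is precisely what Lemmas \ref{lem3} and \ref{lem11} guarantee.
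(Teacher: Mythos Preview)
Your proof is correct and follows essentially the same route as the paper: expand $xy=(\bar{x}+\vec{x})(\bar{y}+\vec{y})$, rewrite $\vec{x}\vec{y}$ via Definition~\ref{dot}, and then invoke the uniqueness of the decomposition in Lemma~\ref{lem2} to read off $\overline{xy}$ and $\vec{xy}$. If anything, you are slightly more careful than the paper in explicitly checking that each summand lands in the correct factor $\KG$ or $\Lambda_G$.
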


\begin{proof}  We have $$xy\,\,=\,\,(\bar{x}+\vec{x})(\bar{y}+\vec{y})\,\,=\,\, \bar{x}\bar{y}+\vec{x}\vec{y}+\bar{x}\vec{y}+\bar{y}\vec{x}\,\,=\,\,\bar{x}\bar{y}-\vec{x}\cdot \vec{y}+\bar{x}\vec{y}+\bar{y}\vec{x}+[\vec{x},\vec{y}].$$  To get the expressions for $\barl{xy}$ and $\vecl{xy}$, note that $xy=\barl{xy}+\vecl{xy}$, and  by Lemma \ref{lem2} this is the unique decomposition as a sum of an element of $\KG$ and an element of $\Lambda_G$.
\hfill${}$\end{proof}

\begin{lemma}{For $\vec{x},\vec{y},\vec{z},\vec{w} \in \Lambda_G$ we may expand the following products in $A_G${\rm :}
                                                                  
\noindent i) $\,\,\,\,\vec{x}\vec{y}=-\vec{x}\cdot \vec{y}+[\vec{x},\vec{y}]$,
                                 
\noindent ii) $[\vec{x},\vec{y}]\vec{z}=- [\vec{x},\vec{y}]\cdot \vec{z}+(\vec{x}\cdot\vec{z})\vec{y}-(\vec{y}\cdot\vec{z})\vec{x}$,                                 

\hspace{0.1mm}
$\vec{z}[\vec{x},\vec{y}]= - [\vec{x},\vec{y}]\cdot \vec{z}-(\vec{x}\cdot\vec{z})\vec{y}+(\vec{y}\cdot\vec{z})\vec{x},$

\bigskip
\noindent iii) $[\vec{x},\vec{y}][\vec{z},\vec{w}]=-(\vec{x}\cdot \vec{z})(\vec{y}\cdot \vec{w})+(\vec{x}\cdot \vec{w})(\vec{y}\cdot \vec{z})+ ([\vec{x},\vec{z}] \cdot\vec{w})\vec{y}- ([\vec{y},\vec{z}] \cdot\vec{w})\vec{x}$

${}\qquad \qquad\quad = -(\vec{x}\cdot \vec{z})(\vec{y}\cdot \vec{w})+(\vec{x}\cdot \vec{w})(\vec{y}\cdot \vec{z})
- ([\vec{x},\vec{y}] \cdot\vec{z})\vec{w}+ ([\vec{x},\vec{y}] \cdot\vec{w})\vec{z}.$ 
}
\label{lem16}\end{lemma}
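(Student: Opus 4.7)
Part (i) is a direct restatement of Definition \ref{dot}: adding the defining expressions for $-\vec{x}\cdot\vec{y}$ and $[\vec{x},\vec{y}]$ gives
$$-\vec{x}\cdot\vec{y}+[\vec{x},\vec{y}]=\tfrac12(\vec{x}\vec{y}+\vec{y}\vec{x})+\tfrac12(\vec{x}\vec{y}-\vec{y}\vec{x})=\vec{x}\vec{y}.$$

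For (ii), the key observation is that $[\vec{x},\vec{y}]\in\Lambda_G$ by Lemma \ref{lem11}, so its bar-part vanishes and $\overline{[\vec{x},\vec{y}]}=0$, $\vec{[\vec{x},\vec{y}]}=[\vec{x},\vec{y}]$; likewise $\bar{\vec{z}}=0$. Applying Lemma \ref{lem15} to the product of $[\vec{x},\vec{y}]$ and $\vec{z}$ in $A_G$, every term carrying a bar-factor drops out, leaving
$$[\vec{x},\vec{y}]\vec{z}=-[\vec{x},\vec{y}]\cdot\vec{z}+[[\vec{x},\vec{y}],\vec{z}].$$
The triple bracket is then expanded by Lemma \ref{lem13}v. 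For the companion formula, I would apply the same argument with the factors reversed: the dot product is symmetric so the scalar term is unchanged, while the bracket flips sign, producing the sign flip on the two remaining terms.

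For (iii), the strategy is identical. Both $[\vec{x},\vec{y}]$ and $[\vec{z},\vec{w}]$ lie in $\Lambda_G$ by Lemma \ref{lem11}, so a second application of Lemma \ref{lem15} collapses to
$$[\vec{x},\vec{y}][\vec{z},\vec{w}]=-[\vec{x},\vec{y}]\cdot[\vec{z},\vec{w}]+[[\vec{x},\vec{y}],[\vec{z},\vec{w}]].$$
I would then substitute Corollary \ref{cor3}ii for the scalar term and the two forms of Corollary \ref{cor3}iii for the bracket term to obtain the two stated expressions.

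The computations are mechanical once Lemma \ref{lem15} is in hand; there is no real obstacle beyond tracking signs. The one conceptual point, used three times, is that any commutator $[\vec{a},\vec{b}]$ of elements of $\Lambda_G$ lies in $\Lambda_G$ itself, so its bar-part vanishes whenever Lemma \ref{lem15} is applied to a product of such commutators (and $\vec{\,\cdot\,}$'s), killing the would-be mixed terms.
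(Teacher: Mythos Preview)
Your proposal is correct and follows essentially the same approach as the paper: part (i) is immediate from Definition \ref{dot}; parts (ii) and (iii) apply Lemma \ref{lem15} to reduce each product to a dot-plus-bracket form, then expand using Lemma \ref{lem13}v and Corollary \ref{cor3}ii,iii respectively. Your explicit remark that $[\vec{x},\vec{y}]\in\Lambda_G$ (so its bar-part vanishes in Lemma \ref{lem15}) is the implicit justification the paper relies on.
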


\begin{proof}{i)  This follows immediately from Definition \ref{dot}.
                                 
\noindent ii) By Lemma \ref{lem15} the products may be written:  $$
[\vec{x},\vec{y}]\vec{z} =  -[\vec{x},\vec{y}]\cdot \vec{z}+[[\vec{x},\vec{y}],\vec{z}],
\qquad 
\vec{z}[\vec{x},\vec{y}]= - [\vec{x},\vec{y}]\cdot \vec{z}-[[\vec{x},\vec{y}],\vec{z}].$$  
The triple bracket may then be expanded by Lemma \ref{lem13}v to give the result.
                                                                  
\bigskip
\noindent iii) From Lemma \ref{lem15} we have $$[\vec{x},\vec{y}][\vec{z},\vec{w}]=-[\vec{x},\vec{y}] \cdot [\vec{z},\vec{w}] 
+ [[\vec{x},\vec{y}] , [\vec{z},\vec{w}]].$$  
We then expand $[\vec{x},\vec{y}] \cdot [\vec{z},\vec{w}]$  by Corollary  \ref{cor3}ii and $[[\vec{x},\vec{y}] , [\vec{z},\vec{w}]]$ by Corollary \ref{cor3}iii.
}\hfill${}$\end{proof}

 Expanding products in $A_G$  can yield identities in $\KG$:

\begin{lemma}{For $x,y,z \in A_G$ we have $\frac12(\barl{xyz}+\barl{zyx})=\barl{xy}\,\bar{z}+\barl{xz}\,\bar{y}+\barl{yz}\,\bar{x}-2\bar{x}\,\bar{y}\,\bar{z}$.
}\label{lem17}\end{lemma}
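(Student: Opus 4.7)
The plan is to reduce both sides to an expression in the invariant parts $\bar{x},\bar{y},\bar{z}$ together with the three pairwise dot products, by applying Lemma \ref{lem15} repeatedly and exploiting the alternating scalar triple product (Lemma \ref{lem13}iv) to kill the cubic anti-invariant contribution.

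First I would apply Lemma \ref{lem15} with $a=xy$ and $b=z$, and substitute in the companion formula $\vecl{xy}=\bar{x}\vec{y}+\bar{y}\vec{x}+[\vec{x},\vec{y}]$ from that same lemma, using bilinearity of the dot product over $\KG$ (Lemma \ref{lem13}i) to obtain
\[
\overline{xyz}\;=\;\overline{xy}\,\bar{z}-\bar{x}(\vec{y}\cdot\vec{z})-\bar{y}(\vec{x}\cdot\vec{z})-[\vec{x},\vec{y}]\cdot\vec{z}.
\]
Swapping the roles of $x$ and $z$ (and invoking $\overline{zy}=\overline{yz}$ from Lemma \ref{lem10}i) yields the parallel expansion of $\overline{zyx}$.

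Next I would sum the two expansions. The cubic terms $-[\vec{x},\vec{y}]\cdot\vec{z}$ and $-[\vec{z},\vec{y}]\cdot\vec{x}$ are negatives of each other, because the triple product is alternating (Lemma \ref{lem13}iv, or equivalently Corollary \ref{cor3}i combined with skew-symmetry of the bracket), so they cancel. Of the remaining dot-product contributions, $\bar{x}(\vec{y}\cdot\vec{z})$ and $\bar{z}(\vec{x}\cdot\vec{y})$ are exchanged under the swap and appear once each, whereas $\bar{y}(\vec{x}\cdot\vec{z})$ is fixed by the swap and so picks up a factor of two.

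Finally I would feed each surviving $\vec{a}\cdot\vec{b}$ through the identity $\vec{a}\cdot\vec{b}=\bar{a}\bar{b}-\overline{ab}$ (once more Lemma \ref{lem15}). Each such substitution converts $-\bar{c}(\vec{a}\cdot\vec{b})$ into $-\bar{x}\bar{y}\bar{z}+\bar{c}\,\overline{ab}$; collecting everything, the sum becomes $2\overline{xy}\,\bar{z}+2\overline{xz}\,\bar{y}+2\overline{yz}\,\bar{x}-4\bar{x}\bar{y}\bar{z}$, and halving gives the claimed identity. The main obstacle is purely bookkeeping — verifying that the $\bar{y}(\vec{x}\cdot\vec{z})$ term genuinely picks up its factor of two while the other two dot products appear only once, and checking that the sign conspiracy of the alternating triple product really forces the cubic anti-invariant contributions to cancel rather than reinforce each other.
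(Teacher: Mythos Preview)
Your proof is correct and follows essentially the same route as the paper: expand $\overline{xyz}$ via Lemma~\ref{lem15}, observe that the scalar triple product term is the only one that changes sign under $x\leftrightarrow z$ so it drops out upon symmetrizing, and then convert the surviving dot products using $\vec{a}\cdot\vec{b}=\bar{a}\bar{b}-\overline{ab}$. The only cosmetic difference is that the paper fully expands $\overline{xy}\,\bar{z}$ to $\bar{x}\bar{y}\bar{z}-(\vec{x}\cdot\vec{y})\bar{z}$ before symmetrizing (making the $x\leftrightarrow z$ symmetry of all but the last term manifest and slightly streamlining the bookkeeping), whereas you leave it intact and reassemble at the end.
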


\begin{proof}{By Lemma \ref{lem15} we have $$\barl{xyz}\,\,+\vecl{xyz}\,\,=\,\,xyz\,\,=\,\,(\bar{x}\bar{y}-\vec{x}\cdot \vec{y}+\bar{x}\vec{y}+\bar{y}\vec{x}+[\vec{x},\vec{y}])(\bar{z}+\vec{z}).$$  
Equating components in $\KG$, we get
$$\barl{xyz}\,\,=\,\,\bar{x}\,\bar{y}\,\bar{z}-(\vec{x}\cdot \vec{y})\bar{z} -(\vec{y}\cdot \vec{z})\bar{x}-(\vec{x}\cdot \vec{z})\bar{y}-[\vec{x},\vec{y}])\cdot \vec{z}\,.$$                                                    
Only the last term  changes sign when $x$ and $z$ are swapped, so $$\frac12(\barl{xyz}+\barl{zyx})=\bar{x}\,\bar{y}\,\bar{z}-(\vec{x}\cdot \vec{y})\bar{z} -(\vec{y}\cdot \vec{z})\bar{x}-(\vec{x}\cdot \vec{z})\bar{y}.$$
Finally, we may use Lemma \ref{lem12}i to substitute in expressions for the dot products.} \hfill${}$\end{proof}

\begin{lemma}{Given $\vec{x}, \vec{y},\vec{z},\vec{u},\vec{v},\vec{w}\in \Lambda_G$, we have the following identities in $\KG${\rm  :}                                                                  

\noindent i) $$([\vec{y},\vec{z}]\cdot \vec{w})(\vec{x} \cdot \vec{u})
-([\vec{x},\vec{z}]\cdot \vec{w})(\vec{y} \cdot \vec{u})
+([\vec{x},\vec{y}]\cdot \vec{w})(\vec{z} \cdot \vec{u})
-([\vec{x},\vec{y}]\cdot \vec{z})(\vec{w} \cdot \vec{u})=0.$$

\noindent ii) $$([\vec{x},\vec{y}]\cdot\vec{z})([\vec{u},\vec{v}]\cdot\vec{w})= 
\left| 
\begin{array}{ccc} 
\vec{x}\cdot\vec{u}\,\,&\,\, \vec{x}\cdot\vec{v}\,\,&\,\,\vec{x}\cdot\vec{w}\\
\vec{y}\cdot\vec{u}\,\,&\,\, \vec{y}\cdot\vec{v}\,\,&\,\,\vec{y}\cdot\vec{w}\\
\vec{z}\cdot\vec{u}\,\,&\,\, \vec{z}\cdot\vec{v}\,\,&\,\,\vec{z}\cdot\vec{w}\\
\end{array} \right|\,\,{}_{.}$$
}
\label{lem19}\end{lemma}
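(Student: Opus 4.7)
The plan is to derive both identities from the earlier results in Lemma \ref{lem13} and Corollary \ref{cor3}, treating (i) as a warm-up that feeds into (ii).

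For (i), I would apply Corollary \ref{cor3}iv with $\vec{u}$ in place of the trailing vector, obtaining
$$([\vec{x},\vec{y}]\cdot\vec{z})\vec{u}=(\vec{x}\cdot\vec{u})[\vec{y},\vec{z}] -(\vec{y}\cdot\vec{u})[\vec{x},\vec{z}] +(\vec{z}\cdot\vec{u})[\vec{x},\vec{y}].$$
This is an identity in $\Lambda_G$. Taking the dot product of both sides with $\vec{w}$, which is $\KG$-bilinear by Lemma \ref{lem13}i, converts it to an identity in $\KG$, and a simple rearrangement yields (i).

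For (ii), the idea is to recognise the $3\times 3$ determinant as the natural way of packaging a double application of Corollary \ref{cor3}iv. First I would expand the determinant along its first row; each $2\times 2$ minor has the Binet--Cauchy form $(\vec{y}\cdot\vec{a})(\vec{z}\cdot\vec{b})-(\vec{y}\cdot\vec{b})(\vec{z}\cdot\vec{a})$ with $\vec{a},\vec{b}\in\{\vec{u},\vec{v},\vec{w}\}$, and so equals $[\vec{y},\vec{z}]\cdot[\vec{a},\vec{b}]$ by Corollary \ref{cor3}ii. Pulling out the common factor using bilinearity of the dot product rewrites the determinant as $[\vec{y},\vec{z}]\cdot V$, where
$$V \;=\; (\vec{x}\cdot\vec{u})[\vec{v},\vec{w}] - (\vec{x}\cdot\vec{v})[\vec{u},\vec{w}] + (\vec{x}\cdot\vec{w})[\vec{u},\vec{v}].$$
Now a second application of Corollary \ref{cor3}iv, this time with $\vec{u},\vec{v},\vec{w},\vec{x}$ in the roles of $\vec{x},\vec{y},\vec{z},\vec{w}$, collapses $V$ to $([\vec{u},\vec{v}]\cdot\vec{w})\vec{x}$. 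Hence the determinant equals $([\vec{u},\vec{v}]\cdot\vec{w})([\vec{y},\vec{z}]\cdot\vec{x})$, and the cyclic symmetry of the scalar triple product (Corollary \ref{cor3}i) rewrites $[\vec{y},\vec{z}]\cdot\vec{x}$ as $[\vec{x},\vec{y}]\cdot\vec{z}$, giving the stated formula.

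The main obstacle is really just careful bookkeeping: matching the signs produced by the Laplace expansion of the determinant with those in Corollaries \ref{cor3}ii and \ref{cor3}iv, and tracking the variable substitution in the second application of \ref{cor3}iv. Part (i) is an almost immediate consequence of \ref{cor3}iv, so the only substantive step is the choice of how to factor the expanded determinant in (ii); once it is displayed as $[\vec{y},\vec{z}]\cdot V$, the remainder follows mechanically.
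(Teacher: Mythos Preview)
Your proof is correct. For part (i) it is essentially the same argument as the paper's: both obtain (i) by dotting a vector identity in $\Lambda_G$ with one extra vector, the only difference being that the paper starts from Corollary~\ref{cor3}iii (equating its two expressions for $[[\vec{x},\vec{y}],[\vec{z},\vec{w}]]$ to get a four-term relation among $\vec{x},\vec{y},\vec{z},\vec{w}$) and then dots with $\vec{u}$, while you start from Corollary~\ref{cor3}iv and dot with $\vec{w}$.

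For part (ii) your route is genuinely different from the paper's. The paper introduces the auxiliary quantity $[[\vec{x},\vec{y}],\vec{w}]\cdot[[\vec{u},\vec{v}],\vec{z}]$, expands it once via Corollary~\ref{cor3}ii (as a dot of two brackets) and once via Lemma~\ref{lem13}v (reducing each triple bracket), and then solves for $([\vec{x},\vec{y}]\cdot\vec{z})([\vec{u},\vec{v}]\cdot\vec{w})$; the determinant emerges from multiplying out the resulting expression. You instead work directly from the determinant: after recognising each $2\times 2$ minor as $[\vec{y},\vec{z}]\cdot[\vec{a},\vec{b}]$ via Corollary~\ref{cor3}ii, you factor the Laplace expansion as $[\vec{y},\vec{z}]\cdot V$ and collapse $V$ to $([\vec{u},\vec{v}]\cdot\vec{w})\vec{x}$ by Corollary~\ref{cor3}iv. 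Your approach is arguably tidier, since it avoids the auxiliary expression and the final ``multiply out the dot product'' step; the paper's approach, on the other hand, uses only Corollary~\ref{cor3}ii and Lemma~\ref{lem13}v and so does not rely on the slightly deeper identity~\ref{cor3}iv.
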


\begin{proof}{i) From Corollary \ref{cor3}iii we know 
$$([\vec{y},\vec{z}]\cdot \vec{w})\vec{x} 
-([\vec{x},\vec{z}]\cdot \vec{w})\vec{y}
+([\vec{x},\vec{y}]\cdot \vec{w})\vec{z}
-([\vec{x},\vec{y}]\cdot \vec{z})\vec{w} =0.$$   We then take  the dot product of both sides with $\vec{u}$ to obtain the result.

\bigskip                                                                  
\noindent ii) By Corollary \ref{cor3}ii we have \begin{eqnarray}[[\vec{x},\vec{y}],\vec{w}]\cdot [[\vec{u},\vec{v}],\vec{z}]\,\,=\,\,([\vec{x},\vec{y}]\cdot[\vec{u},\vec{v}])(\vec{z}\cdot\vec{w})-([\vec{x},\vec{y}]\cdot\vec{z})([\vec{u},\vec{v}]\cdot\vec{w}).\label{det}\end{eqnarray}                                     From Lemma \ref{lem13}v we have $$[[\vec{x},\vec{y}],\vec{w}]\,\,=\,\,(\vec{x}  \cdot \vec{w})\vec{y}-(\vec{y}\cdot \vec{w})\vec{x}\qquad {\rm and}\qquad [[\vec{u},\vec{v}],\vec{z}]\,\,=\,\,(\vec{u}  \cdot \vec{z})\vec{v}-(\vec{v}\cdot \vec{z})\vec{u}.$$  
Also by Corollary \ref{cor3}ii we have $$([\vec{x},\vec{y}]\cdot[\vec{u},\vec{v}])\,\,=\,\,(\vec{x}\cdot\vec{u})(\vec{y}\cdot\vec{v})-(\vec{x}\cdot\vec{v})(\vec{y}\cdot\vec{u}) .$$  
Substituting these three expressions into (\ref{det}) and rearranging gives:
\begin{eqnarray*}
([\vec{x},\vec{y}]\cdot\vec{z})([\vec{u},\vec{v}]\cdot\vec{w})&=&
\biggl((\vec{x}\cdot\vec{u})(\vec{y}\cdot\vec{v})-(\vec{x}\cdot\vec{v})(\vec{y}\cdot\vec{u})\biggr) 
(\vec{z}\cdot\vec{w})\\&-&
\biggl((\vec{x}  \cdot \vec{w})\vec{y}-(\vec{y}\cdot \vec{w})\vec{x}\biggr) \cdot
\biggl((\vec{u}  \cdot \vec{z})\vec{v}-(\vec{v}\cdot \vec{z})\vec{u}\biggr)
\end{eqnarray*}
Multiplying out the dot product then gives the result.
}\hfill${}$\end{proof}

This lemma will be sufficient for describing the ring structure of $\KG$ in \S\ref{desc}.  We note one further identity, this  time between dot products only:  

\begin{corollary}{Let $\vec{x}, \vec{y},\vec{z},\vec{w},\vec{u},\vec{v},\vec{s},\vec{t} \in \Lambda_G$.  We have{\rm :}$$
\left| 
\begin{array}{cccc} 
\vec{x}\cdot\vec{u}\,\,\,&\,\,\, \vec{x}\cdot\vec{v}\,\,\,&\,\,\,\vec{x}\cdot\vec{s}\,\,\,&\,\,\,\vec{x}\cdot\vec{t}\\
\vec{y}\cdot\vec{u}\,\,\,&\,\,\, \vec{y}\cdot\vec{v}\,\,\,&\,\,\,\vec{y}\cdot\vec{s}\,\,\,&\,\,\,\vec{y}\cdot\vec{t}\\
\vec{z}\cdot\vec{u}\,\,\,&\,\,\, \vec{z}\cdot\vec{v}\,\,\,&\,\,\,\vec{z}\cdot\vec{s}\,\,\,&\,\,\,\vec{z}\cdot\vec{t}\\
\vec{w}\cdot\vec{u}\,\,\,&\,\,\, \vec{w}\cdot\vec{v}\,\,\,&\,\,\,\vec{w}\cdot\vec{s}\,\,\,&\,\,\,\vec{w}\cdot\vec{t}\\
\end{array} \right| =0.
$$}\label{cor4}\end{corollary}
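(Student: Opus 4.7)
The plan is to expand the $4\times 4$ determinant along its first column and recognise each of the resulting $3\times 3$ minors via the identity in Lemma \ref{lem19}ii. The first column has entries $\vec{x}\cdot\vec{u},\vec{y}\cdot\vec{u},\vec{z}\cdot\vec{u},\vec{w}\cdot\vec{u}$, and after deleting column $1$ every remaining minor is precisely the $3\times 3$ Gram-style determinant whose columns are indexed by $\vec{v},\vec{s},\vec{t}$. Hence by Lemma \ref{lem19}ii each such minor equals a product of the form $([\vec{a}_1,\vec{a}_2]\cdot\vec{a}_3)\,([\vec{v},\vec{s}]\cdot\vec{t})$, where $\{\vec{a}_1,\vec{a}_2,\vec{a}_3\}$ runs over the three-element subsets of $\{\vec{x},\vec{y},\vec{z},\vec{w}\}$.

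The common factor $([\vec{v},\vec{s}]\cdot\vec{t})$ can then be pulled out of the whole expansion, leaving
$$
\det M = ([\vec{v},\vec{s}]\cdot\vec{t}) \Bigl( (\vec{x}\cdot\vec{u})([\vec{y},\vec{z}]\cdot\vec{w}) - (\vec{y}\cdot\vec{u})([\vec{x},\vec{z}]\cdot\vec{w}) + (\vec{z}\cdot\vec{u})([\vec{x},\vec{y}]\cdot\vec{w}) - (\vec{w}\cdot\vec{u})([\vec{x},\vec{y}]\cdot\vec{z}) \Bigr).
$$
The bracketed expression is exactly the left-hand side of the identity in Lemma \ref{lem19}i, hence it vanishes and the whole determinant is zero.

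I do not expect any genuine obstacle: once the minors have been rewritten via Lemma \ref{lem19}ii the common scalar triple product factors out uniformly, and Lemma \ref{lem19}i is tailored to annihilate the remaining alternating sum. The only care needed is to keep track of the signs in the cofactor expansion, and to verify that the assignment of the three ``$\vec{x},\vec{y},\vec{z}$'' slots in Lemma \ref{lem19}ii is done consistently so that the four minors really do share $([\vec{v},\vec{s}]\cdot\vec{t})$ as a common factor with the correct signs matching those in Lemma \ref{lem19}i.
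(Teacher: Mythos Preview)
Your argument is correct and essentially identical to the paper's: both expand the determinant, rewrite each $3\times 3$ minor via Lemma~\ref{lem19}ii, pull out the common scalar triple product factor, and kill the remaining alternating sum with Lemma~\ref{lem19}i. The only difference is that the paper expands along the first \emph{row} (so the common factor is $[\vec{y},\vec{z}]\cdot\vec{w}$ and Lemma~\ref{lem19}i is applied to $\vec{u},\vec{v},\vec{s},\vec{t}$), whereas you expand along the first \emph{column}; this is a trivial variant.
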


\begin{proof}{We expand the determinant along the top row, using Lemma \ref{lem19}ii to express the minors:                                   $$
{\Big (}[\vec{y},\vec{z}]\cdot \vec{w}{\Big)}{\Big (} 
([\vec{v},\vec{s}]\cdot \vec{t})(\vec{u}\cdot \vec{x})
-([\vec{u},\vec{s}]\cdot \vec{t})(\vec{v}\cdot \vec{x})
+([\vec{u},\vec{v}]\cdot \vec{t})(\vec{s}\cdot \vec{x})
-([\vec{u},\vec{v}]\cdot \vec{s})(\vec{t}\cdot \vec{x})
{\Big )}$$ Lemma \ref{lem19}i then implies that the second factor here is 0.
}\hfill${}$\end{proof}

We finish this section with some identities relating to powers and commutators of group elements.

\begin{definition}{For $n      \in    \Z$ let $P_n$ be the polynomial over $\kappa$ determined by: $$P_0         =        0,\qquad P_1       =        1, \qquad 2xP_n(x)=P_{n-1}(x)+P_{n+1 }(x).$$}\label{poly}\end{definition}
 
We may break down powers of group elements using Lemma \ref{lem10}ii:

\begin{lemma}{Given $g,h,k \in G$ we have $\overline{gh^nk}=\overline{ghk}P_n(\bar{h})-\overline{gk}P_{n-1}(\bar{h})$.} \label{cor1}\end{lemma}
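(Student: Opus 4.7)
The plan is to prove the identity by induction, exploiting the fact that Lemma \ref{lem10}ii supplies exactly the same three-term recurrence as the one defining $P_n$.

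First I would fix $g, h, k$ and apply Lemma \ref{lem10}ii with the substitution $x \mapsto g$, $y \mapsto h$ and $z \mapsto h^{n}k$. Since $h^* = h^{-1}$, this yields the key recurrence
$$
2\bar{h}\,\overline{gh^{n}k} \;=\; \overline{gh^{n+1}k} + \overline{gh^{n-1}k},
$$
which rearranges to $\overline{gh^{n+1}k} = 2\bar{h}\,\overline{gh^{n}k} - \overline{gh^{n-1}k}$. This is identical in form to the defining relation $P_{n+1}(x) = 2xP_{n}(x) - P_{n-1}(x)$, so the two sides of the claimed identity satisfy the same recurrence in $n$.

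Next I would dispatch the base cases. For $n=1$ the claim reduces to $\overline{ghk} = \overline{ghk}\cdot 1 - \overline{gk}\cdot 0$, which is trivial. For $n=0$ one needs $P_{-1}(x) = -1$; this follows from the recurrence at $n=0$: $0 = 2x P_0(x) = P_{-1}(x) + P_1(x) = P_{-1}(x) + 1$. Then the right-hand side is $\overline{ghk}\cdot 0 - \overline{gk}\cdot(-1) = \overline{gk} = \overline{gh^{0}k}$, as required. With these two base cases established, a straightforward induction upwards on $n$ using the recurrence above proves the identity for all $n \geq 0$, and the symmetric induction downwards (using the same recurrence rearranged as $\overline{gh^{n-1}k} = 2\bar{h}\,\overline{gh^{n}k} - \overline{gh^{n+1}k}$) handles negative $n$.

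There is no genuine obstacle here; the content of the lemma is simply the observation that the map $n \mapsto \overline{gh^{n}k}$ lies in the two-dimensional solution space of the Chebyshev-type recurrence defining $P_n$, pinned down by the two initial values at $n=0$ and $n=1$, which are $\overline{gk}$ and $\overline{ghk}$ respectively. The only mild care required is in treating negative $n$, which is handled by running the induction in both directions.
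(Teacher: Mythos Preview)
Your proof is correct and essentially identical to the paper's own argument: both establish the base cases $n=0,1$ using $P_{-1}=-1,\ P_0=0,\ P_1=1$, derive the three-term recurrence $2\bar{h}\,\overline{gh^{n}k}=\overline{gh^{n+1}k}+\overline{gh^{n-1}k}$ from Lemma~\ref{lem10}ii, and then induct in both directions. The only cosmetic difference is that the paper phrases the inductive step as ``if the statement holds for $n=r,r+1$ then also for $n=r-1,r+2$'', which is exactly your upward/downward induction.
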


\begin{proof}{The cases $n=0,1$ follow from $P_{-1}=-1, P_0=0,P_1=1$.  For $r \in \Z$ we have 
\begin{eqnarray*}\barl{gh^{r-1}k}&=&2\bar{h}\barl{gh^rk}-\barl{gh^{r+1}k},\\ \barl{gh^{r+2}k}&=&2\bar{h}\barl{gh^{r+1}k}-\barl{gh^{r}k},\end{eqnarray*}  by Lemma                  \ref{lem10}ii. Thus if the statement of the lemma  holds for $n=r, r+1$, then  it also holds for $n=r-1,r+2$ and we are done by induction.}
\end{proof}

\begin{lemma}
Given $g,h \in G$ we have $\barl{gh^n}-\barl{gh^{-n}}=(\barl{gh}-\barl{gh^{-1}})P_n(\bar{h})$.
\end{lemma}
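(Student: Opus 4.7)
The plan is to apply Lemma \ref{cor1} twice, once to $\barl{gh^n}$ directly and once to $\barl{gh^{-n}}$ after rewriting it as $\barl{g(h^{-1})^n}$, and then subtract.

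First, set $k = e$ in Lemma \ref{cor1} to obtain
$$\barl{gh^n} = \barl{gh}\,P_n(\bar{h}) - \bar{g}\,P_{n-1}(\bar{h}).$$
Second, write $h^{-n} = (h^{-1})^n$ and apply Lemma \ref{cor1} with $h$ replaced by $h^{-1}$ and $k = e$. Since $h^* = h^{-1}$ in $\kG$, Lemma \ref{lem10}iii gives $\overline{h^{-1}} = \bar{h}$, so the polynomial argument does not change, and I get
$$\barl{gh^{-n}} = \barl{gh^{-1}}\,P_n(\bar{h}) - \bar{g}\,P_{n-1}(\bar{h}).$$
Subtracting these two identities, the $\bar{g}\,P_{n-1}(\bar{h})$ terms cancel and $P_n(\bar{h})$ factors out, yielding the claimed identity $\barl{gh^n} - \barl{gh^{-n}} = (\barl{gh} - \barl{gh^{-1}})\,P_n(\bar{h})$.

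There is essentially no obstacle here once one notices the trick of applying Lemma \ref{cor1} with base $h^{-1}$ rather than trying to extend it to negative exponents directly; the only point requiring any care is confirming $\overline{h^{-1}} = \bar{h}$, which is immediate from the definition of $*$ on group elements together with Lemma \ref{lem10}iii.
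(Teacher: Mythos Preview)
Your proof is correct and follows essentially the same route as the paper: apply Lemma \ref{cor1} with $k=e$ once to $\barl{gh^n}$ and once to $\barl{gh^{-n}}=\barl{g(h^{-1})^n}$, using $\overline{h^{-1}}=\bar{h}$, then subtract. The paper simply presents this more tersely, writing the two expansions side by side without spelling out the substitution $h\mapsto h^{-1}$.
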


\begin{proof}
Apply Lemma \ref{cor1} with $k=e$ to get: $\overline{gh^n}-\overline{gh^{-n}}=$ $$
(\overline{gh}P_n(\bar{h})-\overline{g}P_{n-1}(\bar{h}))-(\overline{gh^{-1}}P_n(\bar{h})-\overline{g}P_{n-1}(\bar{h}))=(\barl{gh}-\barl{gh^{-1}})P_n(\bar{h}).
$$ 
\end{proof}

\begin{corollary}
Given $h \in G$ the ideal $\{h^n\}^{\#\#}\lhd \KG$ is contained in the ideal generated by $P_n(\bar{h})$. \label{power}
\end{corollary}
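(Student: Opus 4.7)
The plan is to apply the previous lemma to each generator of the ideal $\{h^n\}^{\#\#}$ and observe that the factor $P_n(\bar{h})$ can be pulled out. By Definition \ref{defhashhash}, if $B$ is a generating set for $\Lambda_G$ over $\KG$, then $\{h^n\}^{\#\#}$ is generated by the elements $\barl{bh^n} - \barl{bh^{-n}}$ for $\vec{b} \in B$. So it suffices to show each such generator lies in the principal ideal $(P_n(\bar{h})) \lhd \KG$.

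The previous lemma gives, for group elements $g \in G$, the identity
$$\barl{gh^n} - \barl{gh^{-n}} = (\barl{gh} - \barl{gh^{-1}})\, P_n(\bar{h}).$$
First I would extend this identity from $g \in G$ to an arbitrary $x \in A_G$ by $\kappa$--linearity. Both sides are defined via the $\kappa$--linear bar map applied to a $\kappa$--linear function of the first argument (right multiplication by $h^n$, $h^{-n}$, $h$, $h^{\pm 1}$ commutes with linear combinations), so writing $x$ as a linear combination of group elements and summing term by term yields
$$\barl{xh^n} - \barl{xh^{-n}} = (\barl{xh} - \barl{xh^{-1}})\, P_n(\bar{h})$$
for every $x \in A_G$.

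Applying this with $x = \vec{b}$ for each $\vec{b} \in B$, we see that the generator $\barl{bh^n} - \barl{bh^{-n}}$ equals $(\barl{bh} - \barl{bh^{-1}})P_n(\bar{h})$, which lies in $(P_n(\bar{h}))$. Hence every generator of $\{h^n\}^{\#\#}$ is a $\KG$--multiple of $P_n(\bar{h})$, and the containment $\{h^n\}^{\#\#} \subset (P_n(\bar{h}))$ follows. There is no real obstacle here; the only point to verify is that the preceding lemma extends from $G$ to $A_G$ by linearity, which is immediate from the construction of $\barl{\,\cdot\,}$ as a $\kappa$--linear map.
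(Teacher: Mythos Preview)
Your proof is correct and follows the approach the paper intends: the corollary is stated without its own proof precisely because it drops out of the preceding lemma once one observes that the identity extends $\kappa$--linearly from group elements $g$ to arbitrary $x\in A_G$. Your justification for that extension (that $\barl{\,\cdot\,}$ and right multiplication by fixed group elements are $\kappa$--linear) is exactly what is needed, and applying the resulting identity to each generator $[\barl{bh^n}]-[\barl{bh^{-n}}]$ of $\{h^n\}^{\#\#}$ finishes the argument.
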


This will be important when we consider free products of cyclic groups in Sections \ref{cyc} and \ref{Scott}, as they may be expressed as free groups, quotiented out by the normal subgroup generated by a power of each generator. 

\bigskip
Let $a,b \in G$ and let $I \lhd \KG$ be the ideal generated by scalar triple products $\{[\vec{a},\vec{b}]\cdot\vec{c}\vert\,\,\vec{c} \in \Lambda_G\}$.  The following lemma shows that if $x,y \in G$ map to the same group element when the relation that $a$ commutes with $b$ is added, then $\bar{x}-\bar{y}\in I$.

\begin{lemma}
We have an equality of ideals: $\{a^{-1}b^{-1}ab\}^\#=I$.\label{lem:com}
\end{lemma}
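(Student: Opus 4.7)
The plan is to prove both inclusions, each of which reduces to a single algebraic identity. The key calculation, valid for any $y \in A_G$, is
\[
\overline{y \cdot ab} - \overline{y \cdot ba} \;=\; -2\,[\vec{a},\vec{b}] \cdot \vec{y}.
\]
To verify this, apply Lemma \ref{lem15} to both $y \cdot ab$ and $y \cdot ba$: the symmetric parts give $\bar{y}\,\overline{ab} - \bar{y}\,\overline{ba}$, which vanishes by Lemma \ref{lem10}i since $\overline{ab} = \overline{ba}$; what remains is $-\vec{y} \cdot (\vec{ab} - \vec{ba})$. Applying Lemma \ref{lem15} once more gives $\vec{ab} - \vec{ba} = [\vec{a},\vec{b}] - [\vec{b},\vec{a}] = 2[\vec{a},\vec{b}]$.

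For the inclusion $I \subseteq \{a^{-1}b^{-1}ab\}^\#$, I would take an arbitrary generator $[\vec{a},\vec{b}]\cdot\vec{c}$ of $I$ and exploit the group-theoretic identity $ba \cdot (a^{-1}b^{-1}ab) = ab$. Multiplying by $\vec{c}$ on the left gives $(\vec{c}\cdot ba)(a^{-1}b^{-1}ab) = \vec{c}\cdot ab$ in $A_G$, so by Lemma \ref{lem8} the element $\overline{\vec{c}\cdot ab} - \overline{\vec{c}\cdot ba}$ lies in $\{a^{-1}b^{-1}ab\}^\#$. The key calculation (with $y = \vec{c}$, noting $\vec{\vec{c}} = \vec{c}$) identifies this element with $-2[\vec{a},\vec{b}]\cdot \vec{c}$, so $[\vec{a},\vec{b}]\cdot\vec{c} \in \{a^{-1}b^{-1}ab\}^\#$.

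For the reverse inclusion $\{a^{-1}b^{-1}ab\}^\# \subseteq I$, I would argue that by Lemma \ref{lem8} every generator of $\{a^{-1}b^{-1}ab\}^\#$ (in fact every element of the form $\overline{x(a^{-1}b^{-1}ab)} - \bar{x}$ with $x \in A_G$) can be rewritten via the substitution $x = y \cdot ba$: then $x \cdot a^{-1}b^{-1}ab = y \cdot ab$, so the generator becomes $\overline{y\cdot ab} - \overline{y\cdot ba}$, which by the key calculation equals $-2[\vec{a},\vec{b}]\cdot \vec{y} \in I$.

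The only real content is spotting the rewriting trick $ba \cdot (a^{-1}b^{-1}ab) = ab$, which makes the commutator generator interact cleanly with Lemma \ref{lem15}. Once that observation is in place, both inclusions fall out of the same short calculation without any need for induction on the group-theoretic structure or invocation of the more elaborate identities in Section \ref{iden}.
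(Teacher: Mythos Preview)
Your proof is correct and follows essentially the same approach as the paper. Your ``key calculation'' $\overline{y\cdot ab}-\overline{y\cdot ba}=-2[\vec a,\vec b]\cdot\vec y$ is exactly Lemma~\ref{lem12}iii combined with the cyclicity of Lemma~\ref{lem10}i (the paper cites Lemma~\ref{lem12}iii directly rather than rederiving it via Lemma~\ref{lem15}), and both inclusions then use the identical substitution trick $ba\cdot(a^{-1}b^{-1}ab)=ab$, i.e.\ setting $y=xa^{-1}b^{-1}$.
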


\begin{proof}
Given $\vec{c} \in \Lambda_G$, Lemma \ref{lem12}iii gives:
$$
[\vec{a},\vec{b}]\cdot\vec{c}=-\frac12 (\barl{cab}-\barl{cba})=-\frac12 (\,\barl{cba(a^{-1}b^{-1}ab)}-\barl{cba}\,)\in \{a^{-1}b^{-1}ab\}^\#.
$$
Thus $I \subset \{a^{-1}b^{-1}ab\}^\#$.  Conversely, given $c\in A_G$, Lemma \ref{lem12}iii gives:
$$
-\frac12 (\,\barl{c(a^{-1}b^{-1}ab)}-\barl{c}\,)=-\frac12 (\,\barl{(ca^{-1}b^{-1})ab}-\barl{(ca^{-1}b^{-1})ba}\,)=[\vec{a},\vec{b}]\cdot\vecl{(ca^{-1}b^{-1})}\in I.
$$ Thus $\{a^{-1}b^{-1}ab\}^\#\subset I$.
\end{proof}

\begin{lemma} i) We have: $[\vec{a},\vec{b}]\cdot[\vec{a},\vec{b}]=(\vec{a}\cdot\vec{a})(\vec{b}\cdot\vec{b})-(\vec{a}\cdot\vec{b})^2$.

\hspace{16mm}ii) Commutators in $\KG$ break down as follows: $$\overline{aba^{-1}b^{-1}}= 2 \overline{ab}^2- 4 \bar{a} \bar{b}\overline{ab}+2\bar{a}^2+2\bar{b}^2-1.$$\label{lem:comdotcom}
\end{lemma}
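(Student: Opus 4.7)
\medskip
\noindent\textbf{Proof plan.}

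For part (i), the identity is essentially the Lagrange/Cauchy--Binet identity in disguise, and it falls straight out of Corollary \ref{cor3}ii by taking $\vec{z}=\vec{a}$ and $\vec{w}=\vec{b}$. Nothing more is required.

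For part (ii), my plan is to factor $aba^{-1}b^{-1}$ as the product $p\cdot q$ with $p=ab$ and $q=a^{-1}b^{-1}$ and apply Lemma \ref{lem15}, which gives $\overline{aba^{-1}b^{-1}}=\bar p\bar q-\vec p\cdot\vec q$. By Lemma \ref{lem10}(iii) combined with \ref{lem10}(i), $\overline{a^{-1}b^{-1}}=\overline{ba}=\overline{ab}$, so $\bar p\bar q=\overline{ab}^{\,2}$ and the whole problem reduces to evaluating the single dot product $\vec p\cdot\vec q$.

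To evaluate it, I will expand $\vec p$ and $\vec q$ using Lemma \ref{lem15} together with the obvious relations $\overline{a^{-1}}=\bar a$ and $\vec{a^{-1}}=-\vec a$ (and similarly for $b$), yielding
$$
\vec p=\bar a\vec b+\bar b\vec a+[\vec a,\vec b],\qquad
\vec q=-(\bar a\vec b+\bar b\vec a)+[\vec a,\vec b].
$$
Setting $A=\bar a\vec b+\bar b\vec a$ and $C=[\vec a,\vec b]$, the alternating property of the triple product (Lemma \ref{lem13}(iv)) gives $\vec a\cdot C=\vec b\cdot C=0$, so all cross terms die and $\vec p\cdot\vec q=-A\cdot A+C\cdot C$. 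Expanding $A\cdot A$ by bilinearity and rewriting $C\cdot C$ via part (i) produces an expression purely in the scalars $\vec a\cdot\vec a$, $\vec b\cdot\vec b$ and $\vec a\cdot\vec b$.

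Finally, I will convert these three scalars into the target variables $\bar a,\bar b,\overline{ab}$ using two elementary substitutions: $\vec a\cdot\vec b=\bar a\bar b-\overline{ab}$ from Lemma \ref{lem12}(i); and $\vec a\cdot\vec a=1-\bar a^{2}$ (respectively $\vec b\cdot\vec b=1-\bar b^{2}$), which follows from $aa^{-1}=e$ giving $\bar a^{2}-\vec a^{2}=1$ after squaring the decomposition $a=\bar a+\vec a$. Substituting and collecting terms will produce the desired formula; all $\bar a^{2}\bar b^{2}$ contributions cancel, leaving exactly $2\,\overline{ab}^{\,2}-4\bar a\bar b\,\overline{ab}+2\bar a^{2}+2\bar b^{2}-1$. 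The only real hazard is arithmetic bookkeeping in this final cancellation, since five separate terms contribute to the coefficient of $\bar a^{2}\bar b^{2}$ and they must sum to zero.
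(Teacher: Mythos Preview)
Your argument is correct. Part~(i) is identical to the paper's: both invoke Corollary~\ref{cor3}ii directly.

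For part~(ii) you take a genuinely different, though closely related, route. The paper does not split $aba^{-1}b^{-1}$ as $(ab)(a^{-1}b^{-1})$ and compute $\bar p\bar q-\vec p\cdot\vec q$; instead it applies Lemma~\ref{lem12}iii with $z=a^{-1}b^{-1}$ to obtain
\[
[\vec a,\vec b]\cdot\vecl{a^{-1}b^{-1}}=-\tfrac12\bigl(\overline{aba^{-1}b^{-1}}-1\bigr),
\]
and then observes (via Lemma~\ref{lem15} and the vanishing $[\vec a,\vec b]\cdot\vec a=[\vec a,\vec b]\cdot\vec b=0$) that the left side equals $[\vec a,\vec b]\cdot[\vec a,\vec b]$. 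This immediately gives $\overline{aba^{-1}b^{-1}}=1-2\,[\vec a,\vec b]\cdot[\vec a,\vec b]$, and then part~(i) plus the same substitutions you use finish the job. The paper's route is a step shorter because it never needs to compute $A\cdot A$ or the product $\bar p\bar q$; your route trades that shortcut for a more mechanical application of Lemma~\ref{lem15} with somewhat more algebraic bookkeeping at the end. Both reach the same final substitution stage and are equally valid.
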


\begin{proof}
i) This follows immediately from Corollary {\ref{cor3}}ii.

\bigskip
ii) Lemma \ref{lem15} yields an expression for $\vecl{ba}$.  Noting that $[\vec{a},\vec{b}]\cdot\vec{a}=[\vec{a},\vec{b}]\cdot\vec{b}=0$ by Lemma \ref{lem13}iv, this gives:  $[\vec{a},\vec{b}]\cdot[\vec{a},\vec{b}]=-[\vec{a},\vec{b}]\cdot\vecl{ba}=[\vec{a},\vec{b}]\cdot \vecl{(a^{-1}b^{-1})}$.

Thus Lemma \ref{lem12}iii gives $$[\vec{a},\vec{b}]\cdot[\vec{a},\vec{b}]= [\vec{a},\vec{b}]\cdot \vecl{(a^{-1}b^{-1})}=-\frac12(\,\barl{aba^{-1}b^{-1}}-1 \,).$$
Thus from (i): $$\barl{aba^{-1}b^{-1}}=1-2([\vec{a},\vec{b}]\cdot[\vec{a},\vec{b}])=1-2((\vec{a}\cdot\vec{a})(\vec{b}\cdot\vec{b})-(\vec{a}\cdot\vec{b})^2).$$

We complete the proof by substituting in the identities:$$
\vec{a}\cdot\vec{a}=-(\vec{a}\cdot\vecl{a^{-1}})=1-\bar{a}^2,\qquad \vec{b}\cdot\vec{b}=1-\bar{b}^2,\qquad 
\vec{a}\cdot\vec{b}=\bar{a}\bar{b}-\barl{ab},
$$ which all follow from Lemma \ref{lem12}i.
\end{proof}

 \section{A complete description of $\KG$}\label{desc}

By a complete description of $\KG$ we mean a set of elements which generate $\KG$ as a ring over $\kappa$, together with a defining set of relations which those generators satisfy.

Suppose that $K$ is a normal subgroup of $G$ with $G/K\cong H$, and suppose that $L\subset K$  normally generates $K$ in $G$.  By Theorem \ref{thm:norm} we know that if we have a complete description of $\KG$ and a generating set for $A_G$ over $\KG$, then we also have a complete description of $\KH=\KG/L^\#$.  We merely need to add the generating set for $L^\#$ (from Definition \ref{gen}) to the set of relations describing $\KG$.

Given an indexing set $I$, for each $i \in I$,  let $g_i\in F_I$ denote the corresponding letter in the free group on $I$.   Let $L=\{r_j \in F_I\vert\,\, j \in J\}$, for some indexing set $J$ and elements $r_J \in J$.  As before let $K\lhd G$ be the normal closure of $L$.

\begin{definition}{A presentation of $G$ is a pair of lists:  $\langle g_i, i\in I \vert\, r_j, j \in J \rangle$, together with a group isomorphism $F_I/K \cong G$ (with $L, K$ defined as above).}\label{pres}\end{definition}

We now fix a presentation $\langle g_i, i\in I \vert\, r_j, j \in J \rangle$ of a group $G$ with $K,L$ as before.  Elements of $F_I$ may  be regarded as elements of $G$, via the isomorphism $F_I/K\cong G$.   We will make clear which of these we mean from context.

The purpose of this section is to produce a complete description of $\KG$ from this data.   From the discussion above we only need a complete description of $\kappa[F_I]^\#$ and a generating set for $A_{F_I}$ as a module over $\KF$.  Then, using $\KG \cong \KF/L^\#$, we have a complete description of $\KG$.

If the indexing set $I$ is finite, then we will show that we only require a finite number of generators for $\KG$ as a ring over $\kappa$.  Further, if $J$ is also finite, then our description of $\KG$ will only have a finite number of relations.

We begin by defining an abstract ring $R_I$ which we will later  identify with $\kappa[F_I]^\#$.  Let $R_I$ denote the quotient of the polynomial ring $\kappa[\lambda_i,m_{ij}, w_{ijk} \vert\, i,j,k \in I]$ by the following relations for $i,j,k,l,s,t \in I$:

\bigskip

\noindent R1: $\qquad m_{ij}=m_{ji},\quad w_{ijk}=w_{kij},\quad w_{ijk}=-w_{kji}$

\noindent R2: $\qquad m_{ii}=1- \lambda_i^2$

\noindent R3: $\qquad w_{jkl}m_{is}-w_{ikl}m_{js}+w_{ijl}m_{ks}-w_{ijk}m_{ls}=0$

\bigskip
\noindent R4: $\qquad w_{ijk}w_{lst}=\left| 
\begin{array}{ccc} 
m_{il}& m_{is}&m_{it}\\
m_{jl}& m_{js}&m_{jt}\\
m_{kl}& m_{ks}&m_{kt}
\end{array} \right|$

\bigskip
\noindent We define a ring homomorphism $\phi\colon R_I \to \KG$ by:
\begin{eqnarray*}\lambda_i\,\,\,\,\,\,&\mapsto& \,\,\,\,\,\,\bar{g_i},\qquad\qquad\qquad\quad\,  i \in I,\\
 \phi\colon\qquad m_{ij} \,\,&\mapsto& \quad\vec{g_i} \cdot \vec{g_j}, \qquad\qquad\,\,\,\, i,j \in I,\\
w_{ijk}  &\mapsto&\,\,\,\,[\vec{g_i},\vec{g_j}]\cdot \vec{g_k}, \,\qquad i,j,k \in I.
\end{eqnarray*}

\begin{lemma}{The ring homomorphism $\phi$ is well defined.}\label{lem20}\end{lemma}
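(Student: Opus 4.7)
The plan is to verify that each of the defining relations R1--R4 of $R_I$ is satisfied by the proposed images in $\KG$; since the polynomial ring $\kappa[\lambda_i, m_{ij}, w_{ijk}]$ maps naturally to $\KG$ by sending each generator to its prescribed image, all we need to check is that the kernel of this map contains the four families of relations. Each of these is essentially a restatement of an identity already established in Section \ref{iden}, evaluated on the specific elements $\vec{g_i}, \vec{g_j}, \vec{g_k}, \ldots \in \Lambda_G$.

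First I would dispatch R1 and R2. The three symmetries in R1 translate to the symmetry of the dot product (Lemma \ref{lem13}ii, giving $\vec{g_i}\cdot\vec{g_j} = \vec{g_j}\cdot\vec{g_i}$), cyclic invariance of the scalar triple product (Corollary \ref{cor3}i, giving $[\vec{g_i},\vec{g_j}]\cdot\vec{g_k} = [\vec{g_k},\vec{g_i}]\cdot\vec{g_j}$), and the alternating property (Lemma \ref{lem13}iv applied to the transposition swapping the first and third entries, giving $[\vec{g_i},\vec{g_j}]\cdot\vec{g_k} = -[\vec{g_k},\vec{g_j}]\cdot\vec{g_i}$). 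For R2, I would use $\vecl{g_i^{-1}} = -\vec{g_i}$ together with Lemma \ref{lem12}i to compute
\[
\vec{g_i}\cdot\vec{g_i} \;=\; -\vec{g_i}\cdot\vecl{g_i^{-1}} \;=\; -\bigl(\bar{g_i}\,\barl{g_i^{-1}}-\barl{g_i\cdot g_i^{-1}}\bigr) \;=\; 1-\bar{g_i}^{\,2},
\]
as already noted inside the proof of Lemma \ref{lem:comdotcom}. This matches $\phi(1-\lambda_i^2)$.

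Next, R3 and R4 are direct specialisations of the two identities in Lemma \ref{lem19}. For R3, take the identity in Lemma \ref{lem19}i with $\vec{x}=\vec{g_i}, \vec{y}=\vec{g_j}, \vec{z}=\vec{g_k}, \vec{w}=\vec{g_l}, \vec{u}=\vec{g_s}$; the four terms in the resulting equation are precisely $\phi(w_{jkl}m_{is})$, $\phi(w_{ikl}m_{js})$, $\phi(w_{ijl}m_{ks})$ and $\phi(w_{ijk}m_{ls})$ with the correct signs. For R4, apply Lemma \ref{lem19}ii with $(\vec{x},\vec{y},\vec{z})=(\vec{g_i},\vec{g_j},\vec{g_k})$ and $(\vec{u},\vec{v},\vec{w})=(\vec{g_l},\vec{g_s},\vec{g_t})$; this gives $\phi(w_{ijk})\phi(w_{lst})$ as exactly the determinant of the $3 \times 3$ matrix of dot products, which is $\phi$ of the right-hand side of R4.

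I do not expect any genuine obstacle here: the whole content of the lemma is bookkeeping, matching each relation to the already proven identity that makes it hold. The only minor care needed is in R1, where one must carefully track which permutations of the scalar triple product correspond to which signs; but this is immediate from Lemma \ref{lem13}iv once one observes that the map $(i,j,k)\mapsto(k,j,i)$ is a transposition, hence odd.
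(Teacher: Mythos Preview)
Your proposal is correct and follows essentially the same approach as the paper: both verify R1 via Lemma~\ref{lem13}ii, Lemma~\ref{lem13}iv and Corollary~\ref{cor3}i, R2 via the computation $\vec{g_i}\cdot\vec{g_i}=-\vec{g_i}\cdot\vecl{g_i^{-1}}=1-\bar{g_i}^{\,2}$ using Lemma~\ref{lem12}i, and R3, R4 via the two parts of Lemma~\ref{lem19}. Your write-up is slightly more explicit about which substitutions are made, but the argument is identical.
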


\begin{proof}{We must check that the relations R1, R2, R3, R4 are respected by $\phi$.  This is the case for R1 by Lemma                  \ref{lem13}ii, Lemma                  \ref{lem13}iv and Corollary \ref{cor3}i.   

It is also the case for R2 as by Lemma \ref{lem12}i we have $$\phi(m_{ii})= \vec{g_i} \cdot \vec{g_i}= - \vec{g_i} \cdot \vec{{g_i^{-1}}}= 1-\bar{g_i}^2=\phi(1-\lambda_i^2).$$   
Lemma                  \ref{lem19}i implies $\phi$ respects R3 and Lemma \ref{lem19}ii implies $\phi$ respects R4.
} \hfill${}$\end{proof}

\noindent Let $B$ denote the set: $\{1\} \cup \{\vec{g_i} \vert\, i\in I\} \cup \{[\vec{g_i},\vec{g_j}]\vert\,i,j \in I\} \subset A_G$.

\begin{lemma}{$\phi$ is surjective and $B$ is a generating set for $A_G$ as a module over $\KG$.}\label{lem21}\end{lemma}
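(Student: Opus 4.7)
The plan is to prove both claims simultaneously by induction on word length. Let $R = \phi(R_I) \subseteq \KG$, which is a subring of $\KG$ since $\phi$ is a ring homomorphism. I claim that for every $g \in G$: (a) $\bar{g} \in R$, and (b) $\vec{g}$ lies in the $R$-linear span of $\{\vec{g_i} \mid i\in I\} \cup \{[\vec{g_i},\vec{g_j}] \mid i,j\in I\}$. Claim (a), combined with the fact that $\KG$ is $\kappa$-spanned by $\{\bar{g} : g\in G\}$ (since $A_G$ is $\kappa$-spanned by the group elements), immediately gives surjectivity of $\phi$. Claim (b), combined with Lemma \ref{lem2}, gives that $B$ generates $A_G$ as a $\KG$-module.

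I would induct on the length of a word in the $g_i^{\pm1}$ representing $g$. The base cases $g=e$ and $g=g_i^{\pm 1}$ are immediate, since $\bar{g_i^{\pm 1}} = \bar{g_i} = \phi(\lambda_i)$ and $\vec{g_i^{\pm 1}} = \pm\vec{g_i}$. For the inductive step write $g = g_i^\epsilon y$ with $y$ a shorter word. Lemma \ref{lem15} gives
$$\bar{g} = \bar{g_i}\,\bar{y} - \vec{g_i^\epsilon}\cdot\vec{y}, \qquad \vec{g} = \bar{g_i}\,\vec{y} + \bar{y}\,\vec{g_i^\epsilon} + [\vec{g_i^\epsilon},\vec{y}].$$
By induction $\bar{y}\in R$ and $\vec{y}$ is an $R$-combination of $\{\vec{g_j}\}\cup\{[\vec{g_j},\vec{g_k}]\}$, so the terms $\bar{g_i}\bar{y}$, $\bar{g_i}\vec{y}$, and $\bar{y}\,\vec{g_i^\epsilon}$ are immediately in the required form.

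The crux is reducing $\vec{g_i^\epsilon}\cdot\vec{y}$ and $[\vec{g_i^\epsilon},\vec{y}]$ using the identities of Section \ref{iden}. Distributing over the expansion of $\vec{y}$ by bilinearity, the dot product reduces to the two subcases $\vec{g_i}\cdot\vec{g_j} = \phi(m_{ij})$ and $\vec{g_i}\cdot[\vec{g_j},\vec{g_k}] = [\vec{g_j},\vec{g_k}]\cdot\vec{g_i} = \phi(w_{jki})$ (using symmetry of the dot product together with Corollary \ref{cor3}i), both of which lie in $R$; this establishes (a). The bracket reduces to $[\vec{g_i},\vec{g_j}]\in B$ and $[\vec{g_i},[\vec{g_j},\vec{g_k}]]$; for the latter, Lemma \ref{lem13}v yields
$$[\vec{g_i},[\vec{g_j},\vec{g_k}]] \;=\; (\vec{g_k}\cdot\vec{g_i})\vec{g_j} - (\vec{g_j}\cdot\vec{g_i})\vec{g_k},$$
an $R$-combination of $\{\vec{g_j},\vec{g_k}\}$. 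This establishes (b) and completes the induction. The only potential obstacle is ensuring that every product and bracket arising in the recursion falls back into the $R$-span of $B$, and the identities of Lemma \ref{lem13} and Corollary \ref{cor3} are precisely tailored to make this closure work.
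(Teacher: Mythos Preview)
Your proof is correct and follows essentially the same strategy as the paper: both show that the $\phi(R_I)$-span of $B$ contains every group element, then deduce surjectivity of $\phi$ via the decomposition $A_G=\KG\oplus\Lambda_G$. The only difference is packaging: the paper observes in one line that this span contains $g_i^{\pm1}$ and is closed under multiplication by Lemma~\ref{lem16}, whereas you unfold this into an explicit induction on word length using Lemma~\ref{lem15} and Lemma~\ref{lem13}v (which in fact lets you avoid the bracket--times--bracket case of Lemma~\ref{lem16}iii, since you only ever multiply by a single generator at each step).
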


\begin{proof}{We will show that any element of $A_G$ is a linear combination of elements of $B$ over $\phi(R_I)$.  Then given $x\in\KG$ we will have $x=1 \phi(\alpha)+y$, for some $\alpha \in R_I, y \in \Lambda_G$.  By Lemma \ref{lem2} $y=0$ and $x=\phi(\alpha)$ so we conclude $\phi$ is surjective.
                                                    
Let $\langle B \rangle\phi(R_I)$ denote the set of linear combinations of $B$ over $\phi(R_I)$.  Clearly  $\langle B \rangle\phi(R_I)$ is a vector space over $\kappa$.  For $i \in I$ we have $g_i,g_i^{-1} \in \langle B \rangle\phi(R_I)$ as $$g_i= 1 \phi(\lambda_i) + \vec{g_i},\qquad g_i^{-1}= 1 \phi(\lambda_i) - \vec{g_i}.$$
                                                    
Further, by Lemma \ref{lem16} we know $\langle B \rangle\phi(R_I)$ is closed under multiplication.
}\hfill${}$\end{proof}

Thus we have an explicit generating set $B$ for $A_G$ as a module over $\KG$.  Note that if $I$ is finite, then $B$ is also a finite set.  Further, given any finite set $S \subset G$, the ideal $S^\#$  is then generated by a finite set (given in Definition \ref{gen}).

Also, if $I$ is finite then $R_I$ is a finitely generated commutative ring over $\kappa$ and hence Noetherian.   By Lemma                  \ref{lem21} $\KG$ is a quotient of $R_I$, so we also have that $\KG$ is a Noetherian ring, finitely generated over $\kappa$.  This is a stark contrast with the usual group ring, which need not be Noetherian for a finitely generated group.

Let $C\subset\{g_ig_j \vert\, i,j \in I\}$ satisfy that for all $i,j \in I$, either $g_ig_j \in C$ or $g_jg_i \in C$.  
Let  $B'=\{1\} \cup \{{g_i} \vert\, i\in I\} \cup C \subset A_G$.  This can be a convenient alternative to $B$:

\begin{lemma}{$B'$ is also a generating set for $A_G$ as a module over $\KG$.
}
\label{lem22}\end{lemma}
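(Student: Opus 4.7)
The plan is to show that every element of the generating set $B=\{1\}\cup\{\vec{g_i}\}\cup\{[\vec{g_i},\vec{g_j}]\}$ from Lemma \ref{lem21} lies in the $\KG$-span of $B'$. Since $B$ generates $A_G$ over $\KG$, this will imply the same for $B'$.

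First I would handle the easy entries of $B$. The element $1$ is in $B'$ directly. For each $i \in I$, the decomposition $g_i = \bar{g_i}\cdot 1 + \vec{g_i}$ gives $\vec{g_i}= g_i - \bar{g_i}\cdot 1$, and since $\bar{g_i}\in \KG$ this expresses $\vec{g_i}$ as a $\KG$-linear combination of $g_i \in B'$ and $1 \in B'$.

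The main step is to express each bracket $[\vec{g_i},\vec{g_j}]$ in the $\KG$-span of $B'$. Here I would apply Lemma \ref{lem15} to the product $g_ig_j$, obtaining
\[
g_ig_j \;=\; \bar{g_i}\bar{g_j} - \vec{g_i}\cdot\vec{g_j} + \bar{g_i}\vec{g_j} + \bar{g_j}\vec{g_i} + [\vec{g_i},\vec{g_j}],
\]
and then solve for $[\vec{g_i},\vec{g_j}]$. The terms $\bar{g_i}\bar{g_j}$ and $\vec{g_i}\cdot\vec{g_j}$ lie in $\KG$, while $\vec{g_i}$ and $\vec{g_j}$ have already been shown to lie in the $\KG$-span of $B'$. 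Thus whenever $g_ig_j \in C$, the bracket $[\vec{g_i},\vec{g_j}]$ is a $\KG$-combination of elements of $B'$.

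The only subtlety is what to do when $g_ig_j \notin C$ but $g_jg_i \in C$; I would resolve this using the skew-symmetry of the bracket from Lemma \ref{lem13}ii, namely $[\vec{g_i},\vec{g_j}] = -[\vec{g_j},\vec{g_i}]$, and apply the preceding argument to $g_jg_i$ instead. (The diagonal case $i=j$ is automatic, since $[\vec{g_i},\vec{g_i}]=0$.) This exhausts all elements of $B$ and completes the verification; there is no real obstacle here, only bookkeeping between the two possible orderings permitted by the hypothesis on $C$.
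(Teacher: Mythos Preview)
Your proposal is correct and follows essentially the same approach as the paper: show that each element of $B$ lies in the $\KG$-span of $B'$, handling $\vec{g_i}$ via $g_i=\bar{g_i}+\vec{g_i}$ and $[\vec{g_i},\vec{g_j}]$ via the product expansion of Lemma~\ref{lem15}. The paper simply writes down both expressions for $[\vec{g_i},\vec{g_j}]$ (one using $g_ig_j$, one using $g_jg_i$) side by side rather than invoking skew-symmetry explicitly, but this is merely a cosmetic difference.
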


\begin{proof}{For $i,j \in I$ we have $\vec{g_i}=-1\bar{g_i}+g_i$.   By Lemma \ref{lem15},
\begin{eqnarray*}
[\vec{g_i},\vec{g_j}]&=&  \,\,\, \, 1(\vec{g_i}\cdot\vec{g_j}-\bar{g_i}\bar{g_j} )+g_ig_j-\vec{g_i}\bar{g_j}-\vec{g_j}\bar{g_i}\\
&= &-1(\vec{g_i}\cdot\vec{g_j}-\bar{g_i}\bar{g_j} )-g_jg_i +\vec{g_i}\bar{g_j}+\vec{g_j}\bar{g_i}.
\end{eqnarray*}  
Hence the span of $B'$ contains $B$, which (by Lemma                  \ref{lem21}) generates $A_G$.
}\hfill${}$\end{proof}

We  have sets:
\begin{eqnarray*}
\widehat{B} \,&=& \{1\} \cup \{\vec{g_i} \vert\, i\in I\} \cup \{[\vec{g_i},\vec{g_j}]\vert\,i,j \in I\} \subset A_{F_I},\\
\widehat{B'}&=& \{1\} \cup \{{g_i} \vert\, i\in I\} \cup \widehat{C} \subset A_{F_I},
\end{eqnarray*}
where $\widehat{C}\subset\{g_ig_j \vert\, i,j \in I\}\subset A_{F_I}$ satisfies that for all $i,j \in I$, either $g_ig_j \in \widehat{C}$ or $g_jg_i \in \widehat{C}$.  (Note that here we are no longer identifying the $g_i$ with their images in $G$).

Taking Lemma \ref{lem20} with $J=\phi$ gives a ring homomorphism  $\widehat{\phi} \colon R_I \to \KF$:
\begin{eqnarray*}\lambda_i\,\,\,\,\,\,&\mapsto& \,\,\,\,\,\bar{g_i},\qquad\qquad\qquad\quad\,\,  i \in I,\\
 \hat\phi\colon\qquad m_{ij} \,\,&\mapsto& \,\,\,\,\,\vec{g_i} \cdot \vec{g_j}, \qquad\qquad\,\,\,\, i,j \in I,\\
w_{ijk}  &\mapsto&\,\,\,[\vec{g_i},\vec{g_j}]\cdot \vec{g_k},\, \qquad i,j,k \in I.
\end{eqnarray*}

\noindent Taking the $J=\emptyset$ case of Lemma \ref{lem21} and Lemma \ref{lem22} we get:

\begin{lemma} $\widehat{\phi}$ is surjective and $\widehat{B},\widehat{B'}$ generate $A_{F_I}$ as a module over $\KF$. 
\label{lem23}\end{lemma}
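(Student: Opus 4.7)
The plan is to recognize that Lemma \ref{lem23} is precisely the specialization of Lemma \ref{lem21} and Lemma \ref{lem22} to the case where the presentation contains no relations, i.e.\ $J = \emptyset$. In that case $L = \emptyset$, the normal closure $K$ of $L$ in $F_I$ is trivial, and the isomorphism $F_I/K \cong G$ simply identifies $G$ with $F_I$. Under this identification, $\KG$ becomes $\KF$ and $A_G$ becomes $A_{F_I}$.

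First I would check that the ring map $\phi \colon R_I \to \KG$ constructed in Section \ref{desc} coincides with $\widehat{\phi} \colon R_I \to \KF$ when $G = F_I$. This is immediate from the definitions, since both maps are defined by the same assignment $\lambda_i \mapsto \bar{g_i}$, $m_{ij} \mapsto \vec{g_i}\cdot\vec{g_j}$, $w_{ijk} \mapsto [\vec{g_i},\vec{g_j}]\cdot\vec{g_k}$ on the generators of $R_I$, and the $g_i$ live in $F_I$ before being passed to $G$. Likewise the sets $B,B'$ of Lemmas \ref{lem21}--\ref{lem22} become $\widehat{B},\widehat{B'}$ verbatim in this special case (the only change is that the $g_i$ are no longer identified with their images in a quotient).

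Having made these identifications, the two conclusions of Lemma \ref{lem23} follow directly: applying Lemma \ref{lem21} with $J = \emptyset$ gives that $\widehat{\phi}$ is surjective and that $\widehat{B}$ generates $A_{F_I}$ as a module over $\KF$, and applying Lemma \ref{lem22} with $J = \emptyset$ gives that $\widehat{B'}$ also generates $A_{F_I}$ as a module over $\KF$.

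There is essentially no obstacle here; the content of the lemma is notational, isolating the free-group case so that in subsequent sections one can write a general $\KG$ as a quotient $\KF/L^\#$ and use $\widehat{B}$ or $\widehat{B'}$ (via Theorem \ref{thm:norm} and Lemma \ref{lem9}) to describe $L^\#$ by finitely many generators when $I$ is finite.
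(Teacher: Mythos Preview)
Your proposal is correct and matches the paper's own argument exactly: the paper simply states that Lemma \ref{lem23} is the $J=\emptyset$ case of Lemma \ref{lem21} and Lemma \ref{lem22}, which is precisely the specialization you carry out (with more explicit detail about the identifications).
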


The remainder of this section will be devoted to showing that $\widehat{\phi}$ is injective and hence a ring isomorphism.  As $R_I$ was defined in terms of generators and relations, we will have a complete description for $\KF$.  Lemma \ref{lem23} gives generating sets   $\widehat{B}, \widehat{B'}$ for $A_{F_I}$ as a $\KF$--module.  Definition \ref{gen} provides generating sets for the ideal $L^\# \lhd \KF$.  Thus we will then have a complete description for $\KG \cong \KF/L^\#$.

In particular if $I$ is a finite set then $R_I$ is finitely generated and our presentation for it has a finite set of generators and relations.    Thus once we have proved the injectivity of $\widehat{\phi}$, we will have a finite description of $\KF$.  Further if $I$ is finite then so are $\widehat{B}$, $\widehat{B'}$.  Hence if $J$ is also finite, we will have a finite generating set for the ideal $L^\# \lhd \KF$ and thus a finite description of $\KG$.

Our strategy in proving that $\widehat{\phi} \colon R_I \to \KF$ is injective will be to construct another ring homomorphism $\psi\colon \KF  \to S_I$, where $S_I$ is an algebraic extension of a ring of invariants of ${\rm SO}_3(\kappa)$.  As descriptions of such rings are provided by the fundamental theorems of classical invariant theory, we may then verify that the composition $\psi\widehat{\phi}\colon R_I \to S_I$ is injective.  Thus we will have shown that $\widehat{\phi}$ is injective.

 Let $T_I$ be the polynomial  ring $\kappa[\mu_i, x_i,y_i,z_i \vert\, i \in I]$, quotiented by the relations:
$$\mu_i^2+x_i^2+y_i^2+z_i^2=1,\qquad \forall i \in I.$$
Let $H_I=T_I[e_1,e_2,e_3]$, where $e_1,e_2,e_3$ are non-commuting variables satisfying: 
$${e_1}^2={e_2}^2={e_3}^2=e_1e_2e_3=-1.$$
Note in particular that $e_1e_2=-e_2e_1=e_3$.  Also $H_I$ has a basis $\{1,e_1,e_2,e_3\}$ over $T_I$.

We define a ring homomorphism $\widehat{\psi}\colon\kF\to H_I$ by:

\begin{displaymath} 
\widehat{\psi}\colon \begin{array}{ccccc}
g_i&\mapsto& \mu_i+x_ie_1+y_ie_2+z_ie_3,&\qquad\qquad&  i \in I,\\
\quad g_i^{-1}  &\mapsto& \mu_i-x_ie_1-y_ie_2-z_ie_3, &\qquad\qquad& i \in I.
\end{array}
\end{displaymath}

\begin{lemma}{$\widehat{\psi}\colon\kF\to H_I$ is a well defined ring homomorphism.}\label{lem24}\end{lemma}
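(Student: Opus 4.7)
The plan is to exploit the fact that $F_I$ is a free group: any assignment of the generators $g_i$ to invertible elements of $H_I$ extends uniquely to a group homomorphism $F_I \to H_I^{\times}$, and then $\kappa$--linearly to a ring homomorphism $\kappa[F_I] \to H_I$. So it suffices to check that the prescribed images $u_i := \mu_i + x_i e_1 + y_i e_2 + z_i e_3$ and $u_i' := \mu_i - x_i e_1 - y_i e_2 - z_i e_3$ are mutually inverse in $H_I$, i.e.\ that $u_i u_i' = u_i' u_i = 1$.

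The main step is a direct quaternion-style computation. First I would derive the standard anticommutation identities from the defining relations $e_1^2 = e_2^2 = e_3^2 = e_1 e_2 e_3 = -1$: multiplying $e_1 e_2 e_3 = -1$ on the right by $e_3$ gives $e_1 e_2 = e_3$, and cyclic variants give $e_2 e_3 = e_1$ and $e_3 e_1 = e_2$; combining these with $e_j^2 = -1$ yields $e_j e_k = -e_k e_j$ for $j \neq k$. Throughout I am implicitly using that elements of $T_I$ commute with the $e_j$ in $H_I = T_I[e_1,e_2,e_3]$, which is the standard convention for such a quaternionic extension.

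Expanding $u_i u_i'$ and collecting terms, the three pairs $\pm \mu_i x_i e_1$, $\pm \mu_i y_i e_2$, $\pm \mu_i z_i e_3$ cancel outright; the diagonal contributions $-x_i^2 e_1^2$, $-y_i^2 e_2^2$, $-z_i^2 e_3^2$ each collapse to $+x_i^2$, $+y_i^2$, $+z_i^2$; and each mixed pair $-x_i y_i(e_1 e_2 + e_2 e_1)$, $-x_i z_i(e_1 e_3 + e_3 e_1)$, $-y_i z_i(e_2 e_3 + e_3 e_2)$ vanishes by the anticommutation identities. The residue is $\mu_i^2 + x_i^2 + y_i^2 + z_i^2$, which equals $1$ precisely by the defining relation of $T_I$. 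The calculation for $u_i' u_i$ is symmetric, giving the same result.

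I do not anticipate a genuine obstacle here: the only thing that could go wrong is an inconsistency in the quaternion multiplication table or a sign error in the cross terms, and the defining relations have been arranged so that the resulting element of $H_I$ is exactly the quaternionic norm. Once $u_i$ and $u_i'$ are shown to be mutual inverses, the universal property of the free group and the universal property of the group ring complete the verification with no further content.
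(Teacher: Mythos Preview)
Your proposal is correct and takes essentially the same approach as the paper: both verify that the prescribed images of $g_i$ and $g_i^{-1}$ are mutual inverses in $H_I$, reducing to the identity $\mu_i^2+x_i^2+y_i^2+z_i^2=1$. The paper simply states the two products equal $1$ without expanding, whereas you supply the intermediate quaternion computation and make the appeal to the universal properties of the free group and group ring explicit.
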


\begin{proof}Note that:  \begin{eqnarray*} \widehat{\psi}(g_i\,\,\,\,) \widehat{\psi}(g_i^{-1})&=&( \mu_i+x_ie_1+y_ie_2+z_ie_3)(\mu_i-x_ie_1-y_ie_2-z_ie_3)=1,\\
\widehat{\psi}(g_i^{-1}) \widehat{\psi}(g_i\,\,\,\,)&=&( \mu_i-x_ie_1-y_ie_2-z_ie_3)(\mu_i+x_ie_1+y_ie_2+z_ie_3)=1.\end{eqnarray*}

\end{proof}

\begin{definition}For any $q=u+ae_1+be_2+ce_3 \in H_I$, with $u,a,b,c \in T_I$, we define its \emph{conjugate} to be $q^*=u-ae_1-be_2-ce_3$.\end{definition}

One may directly verify that for  $q_1,\,q_2\in H_I$, one has $(q_1q_2)^*=q_2^*q_1^*$.

\begin{lemma}{For $\alpha \in\kF$ we have $\widehat{\psi}(\alpha^*)=(\widehat{\psi}(\alpha))^*$.}
\label{lem25}\end{lemma}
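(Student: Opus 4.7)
The plan is to verify the identity first for group elements $\alpha = g \in F_I$ and then extend by $\kappa$--linearity, using the two anti-multiplicative properties already available: the $*$ on $\kF$ satisfies $(xy)^* = y^*x^*$ (since it extends inversion in $F_I$), and the $*$ on $H_I$ satisfies $(q_1q_2)^* = q_2^* q_1^*$ (as the paper notes just before the lemma, this can be checked by direct computation on the basis $\{1, e_1, e_2, e_3\}$).

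First I would reduce to the case $\alpha = g \in F_I$. Both $\widehat{\psi}(\alpha^*)$ and $(\widehat{\psi}(\alpha))^*$ are $\kappa$--linear in $\alpha$ (the first because $\widehat{\psi}$ and $*$ on $\kF$ are $\kappa$--linear; the second because conjugation on $H_I$ is visibly $\kappa$--linear). Since $F_I$ spans $\kF$ over $\kappa$, it suffices to prove the statement for each $g \in F_I$.

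Next I would write $g = g_{i_1}^{\epsilon_1}\cdots g_{i_n}^{\epsilon_n}$ with $\epsilon_k \in \{\pm 1\}$, so that $g^* = g^{-1} = g_{i_n}^{-\epsilon_n}\cdots g_{i_1}^{-\epsilon_1}$. Applying $\widehat{\psi}$ and using that it is a ring homomorphism (Lemma \ref{lem24}),
\begin{equation*}
\widehat{\psi}(g^*) = \widehat{\psi}(g_{i_n}^{-\epsilon_n})\cdots \widehat{\psi}(g_{i_1}^{-\epsilon_1}),
\qquad
(\widehat{\psi}(g))^* = \bigl(\widehat{\psi}(g_{i_1}^{\epsilon_1})\cdots \widehat{\psi}(g_{i_n}^{\epsilon_n})\bigr)^*.
\end{equation*}
By the anti-multiplicativity of $*$ on $H_I$, the right-hand expression equals $(\widehat{\psi}(g_{i_n}^{\epsilon_n}))^* \cdots (\widehat{\psi}(g_{i_1}^{\epsilon_1}))^*$. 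Thus the whole claim reduces to verifying the single-generator identity $(\widehat{\psi}(g_i^{\epsilon}))^* = \widehat{\psi}(g_i^{-\epsilon})$ for each $i \in I$ and each $\epsilon \in \{\pm 1\}$.

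Finally, this last identity is immediate from the defining formulas for $\widehat{\psi}$: writing $\widehat{\psi}(g_i) = \mu_i + x_ie_1+y_ie_2+z_ie_3$, its conjugate is $\mu_i - x_ie_1-y_ie_2-z_ie_3$, which is exactly $\widehat{\psi}(g_i^{-1})$; applying $*$ again (or reading the formulas in the other direction) gives the $\epsilon=-1$ case. There is no real obstacle here, only a bookkeeping point: the compatibility works precisely because both involutions reverse order, so the word-reversal produced by $g \mapsto g^{-1}$ on $\kF$ matches the word-reversal produced by $q \mapsto q^*$ on $H_I$, and on single generators the two operations agree by construction.
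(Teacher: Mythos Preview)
Your proof is correct and follows essentially the same approach as the paper's: both verify the identity on the generators $g_i^{\pm1}$ directly from the definition of $\widehat{\psi}$, then extend using $\kappa$--linearity together with the anti-multiplicativity of the two involutions. The only cosmetic difference is organizational---the paper phrases the multiplicative step as ``if the identity holds for $\alpha$ and $\beta$ then it holds for $\alpha\beta$,'' whereas you first reduce to group elements and then decompose a word into its letters---but the content is the same.
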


\begin{proof}{This is true when $\alpha=g_i$ or $\alpha=g_i^{-1}$ for any $i\in I$.  Clearly if it is true for $\alpha,\beta\in \kF$ then it is true for any $\kappa$--linear combination of $\alpha$ and $\beta$.  It remains to show that it is true for $\alpha\beta$:
\begin{eqnarray*}(\widehat{\psi}(\alpha\beta))^*&=&(\widehat{\psi}(\alpha)\widehat{\psi}(\beta))^*=(\widehat{\psi}(\beta))^*(\widehat{\psi}(\alpha))^*\\&=&\widehat{\psi}(\beta^*)\widehat{\psi}(\alpha^*)=
\widehat{\psi}(\beta^*\alpha^*)=\widehat{\psi}((\alpha\beta)^*).\end{eqnarray*}
}
\end{proof}

If $\alpha \in \kF^*$, then $$\widehat{\psi}(\alpha)=\widehat{\psi}(\alpha^*)=(\widehat{\psi}(\alpha))^*,$$ so
$\widehat{\psi}(\alpha)\in T_I$ and is central in $H_I$.   Thus $\widehat{\psi}$ induces a well defined ring homomorphism:
$$\widetilde{\psi}\colon A_{F_I}\to H_I.$$  

Let $S_I \subset T_I$ denote the image of $\KF \subset A_{F_I}$ under this induced map.  Then let $$\psi\colon\KF \to S_I$$ denote the restriction of $\widetilde{\psi}$ to $\KF$.

For $\alpha,\alpha' \in A_{F_I}$, we may pick $u,u',a,a',b,b',c,c'\in T_I$ such that 
\begin{eqnarray*} \widetilde{\psi}(\alpha)\,\,&=&u+ae_1+be_2+ce_3,\\\widetilde{\psi}(\alpha')&=&u'+a'e_1+b'e_2+c'e_3.\end{eqnarray*}
Then \begin{eqnarray*}{\psi}(\bar{\alpha})&=&
\frac12\biggl(\widetilde{\psi}(\alpha)+\widetilde{\psi}(\alpha^*)\biggr)=\frac12\biggl(\widetilde{\psi}(\alpha)+(\widetilde{\psi}(\alpha))^*\biggr)= u,\\
\widetilde{\psi}(\vec{\alpha})&=&
\frac12\biggl(\widetilde{\psi}(\alpha)-\widetilde{\psi}(\alpha^*)\biggr)=\frac12\biggl(\widetilde{\psi}(\alpha)-(\widetilde{\psi}(\alpha))^*\biggr)= ae_1+be_2+ce_3.
\end{eqnarray*}
Similarly, ${\psi}(\bar{\alpha}')=u'$ and $\widetilde{\psi}(\vec{\alpha}')=a'e_1+b'e_2+c'e_3$.

\begin{lemma}{Continuing with this notation we have: 

\noindent i)\,\, ${\psi}(\,\vec\alpha \cdot \vec{\alpha'}) \,= \,\,aa'+bb'+cc'$.

\noindent       ii) ${\widetilde\psi}([\vec\alpha , \vec{\alpha'}]) = (bc'-cb')e_1-(ac'-ca')e_2+(ab'-ba')e_3$.}
\label{lem26}\end{lemma}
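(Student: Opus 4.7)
The plan is to compute both formulas by brute force, using the fact that $\widetilde{\psi}$ is a ring homomorphism: the dot product and bracket are defined in terms of $\vec{\alpha}\vec{\alpha'}$ and $\vec{\alpha'}\vec{\alpha}$, so I just need to expand $\widetilde{\psi}(\vec{\alpha})\widetilde{\psi}(\vec{\alpha'})$ and $\widetilde{\psi}(\vec{\alpha'})\widetilde{\psi}(\vec{\alpha})$ in $H_I$ and read off the symmetric and antisymmetric parts.

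The first step is to extract the multiplication table of $e_1,e_2,e_3$ from the defining relations $e_i^2=-1$ and $e_1e_2e_3=-1$. Right-multiplying $e_1e_2e_3=-1$ by $e_3$ gives $e_1e_2=e_3$; similarly $e_2e_3=e_1$ and $e_3e_1=e_2$. From these together with $e_i^2=-1$ one derives $e_2e_1=-e_3$, $e_3e_2=-e_1$, $e_1e_3=-e_2$, i.e.\ the $e_i$ anti-commute pairwise. (This is exactly the multiplication in the imaginary quaternions.)

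The second step is the expansion. Writing $\widetilde{\psi}(\vec{\alpha})=ae_1+be_2+ce_3$ and $\widetilde{\psi}(\vec{\alpha'})=a'e_1+b'e_2+c'e_3$, the nine-term product gives
\[
\widetilde{\psi}(\vec{\alpha}\vec{\alpha'})=-(aa'+bb'+cc')+(bc'-cb')e_1-(ac'-ca')e_2+(ab'-ba')e_3,
\]
and swapping the roles of $\alpha$ and $\alpha'$ (which flips the sign of the $e_i$ terms since they come from the anti-symmetric pairs $e_ie_j-e_je_i$) yields
\[
\widetilde{\psi}(\vec{\alpha'}\vec{\alpha})=-(aa'+bb'+cc')-(bc'-cb')e_1+(ac'-ca')e_2-(ab'-ba')e_3.
\]

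The third step is to combine. By Definition \ref{dot} and the fact that $\widetilde{\psi}$ is a ring homomorphism restricting to $\psi$ on $\KF$,
\[
\psi(\vec{\alpha}\cdot\vec{\alpha'})=-\tfrac12\widetilde{\psi}(\vec{\alpha}\vec{\alpha'}+\vec{\alpha'}\vec{\alpha})=aa'+bb'+cc',
\]
giving (i), while
\[
\widetilde{\psi}([\vec{\alpha},\vec{\alpha'}])=\tfrac12\widetilde{\psi}(\vec{\alpha}\vec{\alpha'}-\vec{\alpha'}\vec{\alpha})=(bc'-cb')e_1-(ac'-ca')e_2+(ab'-ba')e_3,
\]
giving (ii). There is no real obstacle here; the only thing to be careful about is the sign bookkeeping in the $e_i e_j$ products, and checking that when $\vec\alpha\cdot\vec{\alpha'}$ is computed via $\psi$ (not $\widetilde\psi$) the answer lands in $T_I$, which it manifestly does since the $e_i$ components of $\vec{\alpha}\vec{\alpha'}+\vec{\alpha'}\vec{\alpha}$ cancel.
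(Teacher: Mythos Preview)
Your proof is correct and follows essentially the same approach as the paper: apply $\widetilde\psi$ to the defining expressions in Definition \ref{dot} and expand the quaternion products. The paper's proof is terser (it just writes down the two lines from Definition \ref{dot} and the final answers), whereas you spell out the multiplication table of the $e_i$ and the nine-term expansion explicitly, but the underlying argument is identical.
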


\begin{proof}From Definition \ref{dot} we have:$$ 
\begin{array}{ccccc}{\psi}(\,\vec{\alpha} \cdot \vec{\alpha'}\,) &=&-\frac12\biggl(\widetilde\psi(\vec{\alpha})\widetilde\psi(\vec{\alpha'})+
\widetilde\psi(\vec{\alpha})\widetilde\psi(\vec{\alpha'})\biggr) \,\,\,&=& aa'+bb'+cc',\\
\widetilde{\psi}([\vec{\alpha} , \vec{\alpha'}]) &=&\,\, \frac12\biggl(\widetilde\psi(\vec{\alpha})\widetilde\psi(\vec{\alpha'})-\widetilde\psi(\vec{\alpha})\widetilde\psi(\vec{\alpha'})\biggr)
&=& \\&&(bc'-cb')e_1\quad-\quad(ac'-ca')e_2&+&(ab'-ba')e_3.\end{array}
$$

\end{proof}

We now need to show that the composition $\psi\widehat{\phi}\colon R_I \to S_I$ is injective. We know that $\widehat{\phi}$ is surjective.  By construction $\psi$ is also surjective.  Therefore $S_I$ is generated by the $\psi\widehat{\phi}(\lambda_i),\,\,\psi\widehat{\phi}(m_{ij}),\,\, \psi\widehat{\phi}(w_{ijk})$ over all $i,j,k \in I$.  By Lemma                  \ref{lem26} we have:

$$
\begin{array}{ccccccccc}
\psi\widehat{\phi}(\lambda_i)\,\,\,\,&=&{\psi}(\bar{g_i})\qquad\quad\,\,\,\,&=&\mu_i,&&i&\in& I,\\
\psi\widehat{\phi}(m_{ij})\,&=&{\psi}(\vec{g_i}\cdot \vec{g_j})\qquad&=&(x_ix_j+y_iy_j+z_iz_j),&& i,j&\in& I,\\
\psi\widehat{\phi}(w_{ijk})&=&{\psi}([\vec{g_i},\vec{g_j}]\cdot \vec{g_k})&=&\left| 
\begin{array}{ccc} 
x_i&y_i&z_i\\
x_j&y_j&z_j\\
x_k&y_k&z_k\\
\end{array} \right|, && i,j,k &\in& I.\\
\end{array}
$$

Let $V_I\subset S_I$ be the subring generated over $\kappa$ by the $\psi\widehat{\phi}(m_{ij})$ and $\psi\widehat{\phi}(w_{ijk})$.
The second fundamental theorem of the invariant theory of ${\rm SO}_3(\kappa)$ \cite [Chap. II \S17]{Weyl} states that the relations between these elements are precisely those implied by R1, R3, R4.  Note that the proof in \cite[Chap. II \S17]{Weyl} is for the case $\kappa                 =                 \R$.  However the result is clearly independent of the field of characteristic zero, $\kappa$.

From the construction of $T_I$, we have that $S_I$ is a multiple quadratic extension of $V_I$ by the elements $\{\mu_i \vert \, i\in I\}$, where each $\mu_i$ satisfies the relation implied by R2: $\mu_i^2+\psi\widehat{\phi}(m_{ii})=1$.

\begin{lemma}{The relations {\rm R1, R2, R3, R4} imply all relations between the elements $$\psi\widehat{\phi}(\lambda_i),\,\qquad\psi\widehat{\phi}(m_{ij}),\,\qquad \psi\widehat{\phi}(w_{ijk}).$$} \label{lem27} \end{lemma}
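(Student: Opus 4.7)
The plan is to show that the surjective ring homomorphism $\psi\widehat\phi\colon R_I \to S_I$ is injective; this is equivalent to the lemma. I would split the argument into two layers, corresponding to the two kinds of generators of $R_I$: the invariant-theoretic generators $m_{ij}, w_{ijk}$, and the quadratic generators $\lambda_i$.

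For the first layer, I would invoke the second fundamental theorem of $\mathrm{SO}_3(\kappa)$-invariant theory (cited from Weyl just above): the subring $V_I \subseteq S_I$ generated by the $\psi\widehat\phi(m_{ij})$ and $\psi\widehat\phi(w_{ijk})$ admits the presentation $\kappa[m_{ij}, w_{ijk}]$ modulo the relations R1, R3, R4. Denote this abstract presentation by $\widetilde V_I$; the natural map $\widetilde V_I \to V_I$ is then an isomorphism.

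For the second layer, observe that by the definition of $R_I$ one has $R_I \cong \widetilde V_I[\lambda_i : i \in I]/(\lambda_i^2 - 1 + m_{ii})$, so every element of $R_I$ may be reduced to a finite sum $\sum_{J} a_J \prod_{i \in J} \lambda_i$ over finite subsets $J \subseteq I$, with coefficients $a_J \in \widetilde V_I$. The key step is to show that the corresponding expressions in $T_I$ are $V_I$-linearly independent on the squarefree monomials $\prod_{i \in J} \mu_i$. For this I would argue that $T_I$ is itself free as a module over $\kappa[x_i, y_i, z_i : i \in I]$ on the basis $\{\prod_{i \in J} \mu_i\}_J$: since the defining relations $\mu_i^2 = 1 - x_i^2 - y_i^2 - z_i^2$ involve disjoint sets of auxiliary variables for distinct $i$, one may realise $T_I$ as the tensor product over $\kappa[x_i, y_i, z_i : i \in I]$ of the individual rank-two free extensions obtained by adjoining each single $\mu_i$. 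Hence if $\sum_J \psi\widehat\phi(a_J)\prod_{i \in J} \mu_i = 0$ in $T_I$, each coefficient $\psi\widehat\phi(a_J) \in V_I \subseteq \kappa[x_i, y_i, z_i : i \in I]$ must vanish, and the first layer then forces $a_J = 0$ in $\widetilde V_I$. Injectivity of $\psi\widehat\phi$ follows.

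The real content resides in the invariant-theoretic input of the first layer; the quadratic-extension step in the second layer is essentially automatic from variable-disjointness. Thus in a self-contained proof the main obstacle would be Weyl's theorem, which the paper cites as a black box; given that, the remaining bookkeeping about freeness of $T_I$ over $\kappa[x_i, y_i, z_i : i \in I]$ is straightforward.
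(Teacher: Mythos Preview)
Your proposal is correct and follows essentially the same two-layer strategy as the paper: first appeal to Weyl's second fundamental theorem for $\mathrm{SO}_3(\kappa)$ to handle $V_I$, then reduce modulo R2 to a $V_I$-linear combination of squarefree monomials in the $\mu_i$ and use their linear independence. The only difference is that the paper simply asserts this linear independence (``As these are linearly independent over $V_I$''), whereas you supply the tensor-product/freeness justification explicitly; this is a welcome elaboration rather than a different approach.
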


\begin{proof}{
Suppose some polynomial expression in the $\psi\widehat{\phi}(\lambda_i),\,\,\psi\widehat{\phi}(m_{ij}),\,\, \psi\widehat{\phi}(w_{ijk})$ is zero in $S_I$.  Under the relation implied by R2, this expression may be written as a $V_I$--linear combination of products of distinct $\psi\widehat{\phi}(\lambda_i)$.  As these are linearly independent over $V_I$, each coefficient must be zero as an element of $V_I$ and hence trivial under the relations implied by R1, R3, R4.}  \hfill${}$\end{proof}

\begin{lemma}{The composition  $\psi\widehat{\phi}\colon R_I \to S_I$ is an isomorphism.}
\label{lem28}\end{lemma}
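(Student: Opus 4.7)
The plan is to deduce the isomorphism directly from Lemma \ref{lem27} together with the surjectivity that has already been assembled in the preceding discussion. The substantive content — namely, that R1--R4 are enough to cut out every polynomial relation among the three families of images — is exactly Lemma \ref{lem27}; what remains is bookkeeping.

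First I would record surjectivity. The map $\widehat{\phi}\colon R_I \to \KF$ is surjective by Lemma \ref{lem23}, and $\psi\colon \KF \to S_I$ is surjective because $S_I$ was \emph{defined} to be the image of $\KF$ under $\widetilde{\psi}$. Composing gives that $\psi\widehat{\phi}$ is surjective.

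For injectivity I would argue as follows. Suppose $r\in R_I$ satisfies $\psi\widehat{\phi}(r)=0$. Lift $r$ to a polynomial $P$ in the variables $\lambda_i,m_{ij},w_{ijk}$, so that $r$ is the class of $P$ modulo the ideal generated by R1--R4. The hypothesis says that $P$, evaluated at $\psi\widehat{\phi}(\lambda_i),\psi\widehat{\phi}(m_{ij}),\psi\widehat{\phi}(w_{ijk})$, vanishes in $S_I$. This is precisely a polynomial relation between these elements, so Lemma \ref{lem27} forces $P$ to lie in the ideal generated by R1--R4, giving $r=0$ in $R_I$.

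Combined with surjectivity this yields the isomorphism. There is no genuine obstacle at this stage: all the real work lives in Lemma \ref{lem27}, which itself rests on the second fundamental theorem of invariant theory for $\mathrm{SO}_3(\kappa)$ applied to $V_I$, together with the quadratic extension structure encoded in R2. Once those are in hand, Lemma \ref{lem28} is a one-line consequence.
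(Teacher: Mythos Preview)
Your proof is correct and follows essentially the same approach as the paper. The paper's one-line argument simply observes that $S_I$ is generated by the $\psi\widehat{\phi}(\lambda_i),\psi\widehat{\phi}(m_{ij}),\psi\widehat{\phi}(w_{ijk})$ subject to precisely the relations implied by R1--R4, which packages the same two ingredients you use: surjectivity (from the preceding discussion) and Lemma~\ref{lem27} for the relations; you have merely unpacked this into explicit surjectivity and injectivity arguments.
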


\begin{proof}{The ring $S_I$ is generated by the $\psi\widehat{\phi}(\lambda_i),\,\,\psi\widehat{\phi}(m_{ij}),\,\, \psi\widehat{\phi}(w_{ijk}), i,j,k \in I$, subject to precisely the relations implied by R1, R2, R3, R4.
}\hfill${}$\end{proof}

\begin{theorem}{We have an isomorphism of rings $\phi\colon\KF \to R_I$.}\label{thm:iso}\end{theorem}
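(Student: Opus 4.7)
The plan is to assemble the theorem directly from the machinery built in Lemma \ref{lem23} and Lemma \ref{lem28}. The map intended here should be understood as $\widehat{\phi}\colon R_I \to \KF$ (or equivalently its inverse), since $\phi$ as originally defined lands in $\KG$ rather than $\KF$; the theorem asserts this intertwining map is a ring isomorphism.

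First I would recall that Lemma \ref{lem23} already gives surjectivity of $\widehat{\phi}\colon R_I \to \KF$. The only thing left to establish is injectivity. For this, I would invoke Lemma \ref{lem28}, which tells us that the composite $\psi\widehat{\phi}\colon R_I \to S_I$ is an isomorphism, hence injective. Since any map whose post-composition with another map is injective must itself be injective, we immediately conclude that $\widehat{\phi}$ is injective.

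Combining surjectivity from Lemma \ref{lem23} with injectivity just deduced, $\widehat{\phi}\colon R_I \to \KF$ is a bijective ring homomorphism, hence a ring isomorphism. Its inverse is then the isomorphism $\KF \to R_I$ asserted by the theorem. No further calculation is needed.

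There is no real obstacle at this stage; all the heavy lifting has been done earlier. The substantive inputs are (a) the explicit generating set $\widehat{B}$ for $A_{F_I}$ over $\KF$ from Lemma \ref{lem21}, which drove surjectivity, and (b) the appeal in Lemma \ref{lem27} to the second fundamental theorem of invariant theory for $\mathrm{SO}_3(\kappa)$ \cite[Chap. II \S17]{Weyl}, which supplied the completeness of R1--R4 as relations among the generators. Granting those, the proof of Theorem \ref{thm:iso} is simply the observation that the injectivity of $\psi\widehat{\phi}$ forces injectivity of $\widehat{\phi}$, closing the loop.
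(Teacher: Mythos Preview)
Your proof is correct and matches the paper's own proof essentially verbatim: the paper simply cites Lemma \ref{lem23} for surjectivity and Lemma \ref{lem28} for injectivity of $\widehat{\phi}$. Your extra remark that injectivity of $\psi\widehat{\phi}$ forces injectivity of $\widehat{\phi}$ is exactly the implicit step, and your observation about the direction of the map in the theorem's statement is apt.
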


\begin{proof}{We know $\widehat{\phi}$ is surjective by Lemma                  \ref{lem23} and injective by Lemma                  \ref{lem28}.}
\hfill${}$\end{proof}

From now on we regard the isomorphism $\widehat{\phi}\colon R_I \to \KF$ as simply the identity.  That is we regard the $\lambda_i,m_{ij},w_{ijk}$ as elements of $\KF$, or indeed their images in $\KG$ depending on context.  Also we write $R_I$ and $\KF$  interchangeably. We summarize the results from this section in the following theorems:

\begin{theorem} Suppose we have a presentation $\langle g_i, i\in I \vert\, r_j, j \in J \rangle$ for a group $G$. Let $L, B,B',\widehat{B},\widehat{B'},R_I$ be as defined in this section.  

\noindent
i) We may describe $\KF$ by identifying it with $R_I$.  

\noindent
ii) Both $\widehat{B}$ and $\widehat{B'}$ are generating sets for $A_{F_I}$ as a module over $\KF$. 

\noindent
iii) We may then describe $\KG$ by identifying it with $\KF/L^\#$. 

\noindent
iv) We may take either $B$ or $B'$ as generating sets for $A_G$ as a module over $\KG$.   

\end{theorem}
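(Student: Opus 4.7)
The plan is essentially one of assembly: every one of the four assertions has already been established in the section, and the theorem serves to collect them into a single user-facing statement. The only thing to do is to point out precisely which earlier result yields which clause, and to glue together two of them in order to get clause (iii).

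For clause (i), I would simply invoke Theorem \ref{thm:iso}, which asserts that the homomorphism $\widehat{\phi}\colon R_I\to\KF$ built from the generators $\lambda_i,m_{ij},w_{ijk}$ and the relations R1--R4 is a ring isomorphism; the presentation of $G$ plays no role at this stage since $R_I$ is defined purely from the index set $I$ of generators. For clause (ii), Lemma \ref{lem23} gives exactly the statement that both $\widehat{B}$ and $\widehat{B'}$ generate $A_{F_I}$ as a module over $\KF$, again using nothing about the relators.

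Clause (iii) is the one that requires a (very short) argument combining two ingredients. The presentation provides a surjective group homomorphism $f\colon F_I\twoheadrightarrow G$ whose kernel is the normal closure of $L=\{r_j\}$. By Lemma \ref{lem5} the induced ring homomorphism $f^\#\colon\KF\twoheadrightarrow\KG$ is surjective, and by Theorem \ref{thm:norm} its kernel is precisely $L^\#\lhd\KF$. The first isomorphism theorem for rings then gives the identification $\KG\cong\KF/L^\#$ claimed in (iii). Clause (iv) is then immediate from Lemmas \ref{lem21} and \ref{lem22}, which were proved precisely for an arbitrary group $G$ presented as $F_I/\langle L\rangle$ and say that $B$ and $B'$ (the images in $A_G$ of $\widehat{B}$ and $\widehat{B'}$) generate $A_G$ as a $\KG$-module.

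There is no real obstacle here; the ``hard part'' was done already in the preceding work, in particular the injectivity argument for $\widehat\phi$ via the quaternionic/invariant-theoretic homomorphism $\widehat\psi$ that established Theorem \ref{thm:iso}, and the closure argument for $K'$ that established Theorem \ref{thm:norm}. The proof of the present theorem should therefore be written as four short items, each a one-line citation of the appropriate earlier result, with (iii) additionally noting the combined use of Lemma \ref{lem5} and Theorem \ref{thm:norm}.
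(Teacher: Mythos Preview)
Your proposal is correct and matches the paper's approach exactly: the paper presents this theorem as a summary (``We summarize the results from this section in the following theorems'') with no explicit proof, relying on precisely the earlier results you cite---Theorem~\ref{thm:iso} for (i), Lemma~\ref{lem23} for (ii), Theorem~\ref{thm:norm} (together with surjectivity of $f^\#$) for (iii), and Lemmas~\ref{lem21} and~\ref{lem22} for (iv). Your citation of Lemma~\ref{lem5} to justify the surjectivity needed in (iii) is a sensible piece of explicitness that the paper leaves implicit.
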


\begin{theorem} In particular, if $I$  and $J$ are finite:

\noindent
i) We have a finite description of $\KF$.  

\noindent
ii) The sets $\widehat{B},\widehat{B'}$ are finite and we have a finite generating set for the ideal $L^\#$.

\noindent
iii) Thus we have a finite description of $\KG$.  In particular $\KG$ is Noetherian, and finitely generated as an algebra over $\kappa$.

\noindent
iv) Also $B,B'$ are finite sets.

\end{theorem}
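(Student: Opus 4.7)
The plan is to treat this theorem as essentially bookkeeping on top of the previous theorem, observing that every construction involved has been made explicitly finite once $I$ and $J$ are.

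First I would handle (i). The ring $R_I = \kappa[\lambda_i, m_{ij}, w_{ijk} \mid i,j,k \in I]/(\text{R1, R2, R3, R4})$ is generated by $|I| + |I|^2 + |I|^3$ variables, and the relations R1--R4 are indexed by finite tuples of elements of $I$, so both the generators and the defining relations are finite in number when $I$ is. Via the identification $\KF \cong R_I$ from Theorem \ref{thm:iso}, this gives the asserted finite description of $\KF$.

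Next I would treat (ii) and (iv) together. The sets $\widehat{B}$ and $\widehat{B'}$ were defined explicitly as unions of subsets indexed by $I$ and $I \times I$, so their cardinalities are bounded by $1 + |I| + |I|^2$ and are finite under our hypothesis. Their images $B, B'$ in $A_G$ inherit finiteness, giving (iv). For the ideal $L^\# \lhd \KF$, Definition \ref{gen} provides the generating set $\{[\overline{br_j}] - [\bar{b}] \mid [b] \in \widehat{B},\, j \in J\}$, which is finite as a product of two finite index sets.

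Finally for (iii), combining Theorem \ref{thm:norm} with (i) and (ii) gives $\KG \cong \KF/L^\#$ as a quotient of a finitely presented commutative $\kappa$-algebra by a finitely generated ideal, hence itself finitely presented over $\kappa$. Being a finitely generated commutative algebra over a field, Hilbert's basis theorem then gives that $\KG$ is Noetherian. There is no genuine obstacle here; the content of the theorem lies entirely in the machinery already built up (especially Theorem \ref{thm:iso} and Lemma \ref{lem23}), and the present statement is the routine ``finite input yields finite output'' corollary. The only point requiring the slightest care is verifying that the generating set for $L^\#$ in Definition \ref{gen} uses a \emph{fixed} generating set of $A_{F_I}$ (namely $\widehat{B}$), so that the $|B| \cdot |L|$ count is honest; this is exactly what Lemma \ref{lem8} guarantees.
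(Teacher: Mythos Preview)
Your proposal is correct and matches the paper's reasoning. The paper does not give a formal proof for this theorem; instead, the argument is scattered in prose throughout \S\ref{desc} (e.g.\ the paragraphs following Lemma \ref{lem21} and the one preceding the construction of $T_I$), and the theorem simply collects those observations. Your write-up organizes the same points---finiteness of the generator and relation sets for $R_I$, explicit finite index sets for $\widehat{B},\widehat{B'},B,B'$, the $|\widehat{B}|\cdot|J|$ bound on generators of $L^\#$ via Definition \ref{gen}, and Noetherianity via Hilbert's basis theorem---exactly as the paper does.
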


Now given a finitely presented group $G$ and two finite subsets $N,M \subset G$ we are in a position to apply Theorem \ref{thm:norm} (or Theorem \ref{hashhash}) to attempt to show that they do not generate the same normal subgroup.  From the theorems above we may compute the ring $\KG$ and find finite sets of generators for the ideals $N^\#,M^\#$ (or $N^{\#\#}, M^{\#\#}$).  We are then left with the task of determining if these ideals are identical.   In the next section we demonstrate this method by giving a proof of a result to do with the normal generation of free products of pairs of cyclic groups.

 \section{Free product of two cyclic groups}\label{cyc}

We may now compute $\KG$ from a presentation of $G$.  As an example we consider: $$G=C_s \star C_t=\langle g_1, g_2 \vert\, g_1^s, g_2^t \rangle,$$
where $s,t>1$.   Here the indexing set for the generators is just $I=\{1,2\}$.  

\begin{lemma}{ $R_I$ is the polynomial ring in three variables: $R_I= \kappa[\lambda_1,\lambda_2,m_{12}]$.}
\label{lem29}\end{lemma}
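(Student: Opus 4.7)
The plan is to show that when $|I|=2$, the relations R1--R4 collapse all the $w$--variables to zero and express the diagonal $m$--variables in terms of the $\lambda_i$, leaving only $\lambda_1,\lambda_2,m_{12}$ as free generators with no surviving relations.

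First I would observe that the two R1 identities $w_{ijk}=w_{kij}$ and $w_{ijk}=-w_{kji}$ together generate the sign action of $S_3$ on triples, so $w_{ijk}$ is completely alternating in $(i,j,k)$. Since every triple with entries in $\{1,2\}$ has a repeated index, this forces $w_{ijk}=0$ in $R_I$ for all $i,j,k\in I$ (using $\mathrm{char}(\kappa)=0$ to go from $2w=0$ to $w=0$). Next, R1 gives $m_{21}=m_{12}$, and R2 gives $m_{11}=1-\lambda_1^2$, $m_{22}=1-\lambda_2^2$, so only $\lambda_1,\lambda_2,m_{12}$ remain as essential generators.

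Second, I would check that R3 and R4 impose no further relations once the $w$'s vanish and the $m_{ii}$ are eliminated. Each term of R3 is a product of a $w$ and an $m$, so R3 reduces to $0=0$. For R4, the left-hand side $w_{ijk}w_{lst}$ is zero; on the right, since $\{i,j,k\}\subset\{1,2\}$ has a repeated element, two rows of the $3\times 3$ matrix coincide, so the determinant vanishes automatically. Thus R4 is also vacuous in this setting.

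Finally, to conclude I would make the isomorphism explicit. Define $\Phi\colon R_I\to\kappa[\lambda_1,\lambda_2,m_{12}]$ by $\lambda_i\mapsto\lambda_i$, $m_{12},m_{21}\mapsto m_{12}$, $m_{ii}\mapsto 1-\lambda_i^2$, and $w_{ijk}\mapsto 0$; the discussion above verifies that R1--R4 are respected, so $\Phi$ is well-defined. The obvious inclusion $\kappa[\lambda_1,\lambda_2,m_{12}]\hookrightarrow R_I$ (sending each variable to itself) is a two-sided inverse to $\Phi$, since $\lambda_1,\lambda_2,m_{12}$ appear in no nontrivial relation of $R_I$. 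Hence $R_I\cong\kappa[\lambda_1,\lambda_2,m_{12}]$. There is no real obstacle here; the only thing that needs care is the bookkeeping that verifies each R4 instance is indeed trivially satisfied.
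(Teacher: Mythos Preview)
Your proof is correct and follows essentially the same approach as the paper: use R1 to kill all $w_{ijk}$ (since two of $i,j,k$ must coincide), use R1 and R2 to reduce to the generators $\lambda_1,\lambda_2,m_{12}$, and then observe that R3 and R4 become vacuous (the latter because the determinant has repeated rows). Your explicit construction of the isomorphism and its inverse is slightly more detailed than the paper's argument, but the substance is identical.
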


\begin{proof}{From R1 we have $m_{12}=m_{21}$.  Also by R1 we have that $w_{ijk}=-w_{ijk}=0$ for all $i,j,k \in \{1,2\}$, as $i,j,k$ cannot be distinct.  R3 then becomes vacuous as does R4 (noting that a matrix with repeated rows must have zero determinant).  Finally R2 allows us to express $m_{11}, m_{22}$ in terms of $\lambda_1,\lambda_2$. 
}\hfill${}$\end{proof}

If we let $L= \{g_1^s,g_2^t\}\subset F_I$, then we have that $\KG\cong R_I/L^\#$ (by Theorems \ref{thm:norm} and \ref{thm:iso}).  To compute $L^\#$ we first need generating sets for $A_{F_I}$ over $R_I$.   By Lemma \ref{lem23}  we may take $B_{g_1^s}=\{1,g_1,g_2,g_2g_1\}$ and $B_{g_2^t}=\{1,g_2,g_1,g_1g_2\}$.  Hence by Lemma \ref{lem9} the following elements generate the ideal $L^\#$: 
\begin{eqnarray}
\begin{array}{ccccc}
\barl{g_1^s}-1,&\quad\quad \barl{g_1^{s+1}}-\barl{g_1},&\qquad\qquad&\barl{g_2g_1^s}-\barl{g_2},&\quad \quad\barl{g_2g_1^{s+1}}-\barl{g_2g_1}, \\
\barl{g_2^t}-1,&\quad \quad \barl{g_2^{t+1}}-\barl{g_2},&\qquad\qquad& \barl{g_1g_2^t}-\barl{g_1},&\quad \quad\barl{g_1g_2^{t+1}}-\barl{g_1g_2}.
\end{array} \label{gensetL}  
\end{eqnarray}

Recall Lemma \ref{cor1}, taking $k=e$:  Given $g,h\in F_I$ we have: $$\overline{gh^n}=\\ \overline{gh}P_n(\barl{h})-\overline{g}P_{n-1}(\barl{h}).$$

Thus we may express our generating set (\ref{gensetL}) as polynomial expressions in $\barl{g_1},\barl{g_2},\barl{g_1g_2},\barl{g_2g_1}$.  Further, by Lemma \ref{lem10}i and Lemma \ref{lem12}i, we may express these in terms of $\lambda_1,\lambda_2, m_{12}$: 
$$\barl{g_1}=\lambda_1,\qquad \barl{g_2}=\lambda_2,\qquad \barl{g_1g_2}=\barl{g_2g_1}=\lambda_1\lambda_2-m_{12}$$

\begin{lemma}  We conclude that $\KG$ is the ring $\kappa[\lambda_1,\lambda_2,m_{12}]$ quotiented by the set of elements \textup{(\ref{gensetL})}, expressed as polynomial expressions in the $\lambda_1,\lambda_2, m_{12}$.
\label{compdesc}
\end{lemma}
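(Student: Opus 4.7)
The plan is to assemble the pieces that have already been set up in the paragraphs leading up to the lemma. By Theorem \ref{thm:norm} applied to the surjection $F_I \twoheadrightarrow G$ with normally generating set $L = \{g_1^s, g_2^t\}$ for the kernel, we have $\KG \cong \KF / L^\#$. By Theorem \ref{thm:iso} combined with Lemma \ref{lem29}, the ring $\KF$ is identified with $R_I = \kappa[\lambda_1, \lambda_2, m_{12}]$. So the task reduces to exhibiting a generating set for the ideal $L^\# \lhd \kappa[\lambda_1, \lambda_2, m_{12}]$ in the form claimed.

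Next, I would invoke Lemma \ref{lem9}, which permits choosing a separate generating set $B_l$ of $A_{F_I}$ over $\KF$ for each relator $l \in L$. By Lemma \ref{lem23} (via Lemma \ref{lem22}) the sets $B_{g_1^s} = \{1, g_1, g_2, g_2 g_1\}$ and $B_{g_2^t} = \{1, g_2, g_1, g_1 g_2\}$ serve as such generating sets. Producing the elements $[\overline{bl}] - [\bar{b}]$ from Definition \ref{gen} over these choices yields exactly the eight elements displayed in (\ref{gensetL}).

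It then remains to confirm that each of these eight expressions really does lie in $\kappa[\lambda_1,\lambda_2,m_{12}]$, that is, can be written as a polynomial in the three chosen ring generators. This is where Lemma \ref{cor1} (applied with $k = e$) does the work: for $g,h \in F_I$, it reduces $\barl{gh^n}$ to a $\kappa$-linear combination of $\barl{gh}$ and $\bar g$ with coefficients $P_n(\bar h)$ and $P_{n-1}(\bar h)$. Applying this with $(g,h) \in \{(e,g_1),(g_1,g_1),(g_2,g_1),(g_2g_1,g_1)\}$ (and similarly for the $g_2^t$ relator) rewrites all eight entries of (\ref{gensetL}) in terms of $\bar g_1, \bar g_2, \barl{g_1 g_2}, \barl{g_2 g_1}$. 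The identifications $\bar g_1 = \lambda_1$, $\bar g_2 = \lambda_2$ are immediate from the definition of $\hat\phi$, and $\barl{g_1 g_2} = \lambda_1 \lambda_2 - m_{12}$ follows from Lemma \ref{lem12}i while $\barl{g_2 g_1} = \barl{g_1 g_2}$ follows from Lemma \ref{lem10}i.

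There is no real obstacle here; the content is purely bookkeeping, with the only subtlety being that one must use Lemma \ref{lem9} (variable generating sets per relator) rather than Definition \ref{gen} with a single fixed $B$ in order to keep the generating list as short and as symmetric as in (\ref{gensetL}). The lemma follows by combining these observations.
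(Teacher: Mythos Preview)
Your proposal is correct and is essentially identical to the paper's own argument: the lemma in the paper is stated without a separate proof environment, as it is simply the summary of the preceding paragraphs, and your write-up assembles exactly those pieces (Theorems~\ref{thm:norm} and~\ref{thm:iso}, Lemma~\ref{lem29}, Lemma~\ref{lem9} with the generating sets $B_{g_1^s},B_{g_2^t}$ from Lemma~\ref{lem23}, the reduction via Lemma~\ref{cor1}, and the identifications via Lemmas~\ref{lem10}i and~\ref{lem12}i).
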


We now consider a result due to Boyer \cite{Boye}.  This states that for any $w\in C_s \star C_t$, the normal closure of a proper power $w^r$, where $r>1$, is not the whole group.  This result is an instance of a more general phenomena in combinatorial group theory, where proper powers are seen to have smaller normal closures than general elements.  In a certain sense, an element of the form $g^r$ is counted as merely $\frac 1r$ of an element as far as normal generation is concerned (see for example \cite{Allc}).  This idea is central to the Potential Counterexamples \ref{pot1} and \ref{pot2}.

We will prove Boyer's result, demonstrating that a group theoretic result proved in 1988  may be reduced to elementary algebra when one considers the commutative version of the group ring.

Let $G=  C_s \star C_t$ for $s,t>1$ and take any $w\in G$ and integer $r>1$.   Let $$f\colon G \to C_s \times C_t$$ be the  abelianisation homomorphism.   If $w^r$ did normally generate $G$ then $f(w)$ would generate  $C_s \times C_t$, so we may write $f(w)=(g_1,g_2)$ where $g_1$ is a generator for $C_s$ and $g_2$ is a generator for $C_t$.  We then have: $$G=\langle g_1, g_2 \vert\, g_1^s, g_2^t \rangle.$$

Pick a word in the letters $g_1, g_2$ to represent $w$, so that the total index of the letter $g_1$ is 1, and the total index of the letter $g_2$ is also 1 (as $f(w)=(g_1,g_2)\in C_s \times C_t$ this can be achieved by adding the correct powers of $g_1^s$ and $g_2^t$ to any word representing $w$).  We abuse notation by also denoting this word $w$.

If $w^r$ did normally generate $G$, then we would have that the normal closure of $g_1^s,g_2^t, w^r\in F_I$ was the whole group (where, as before, $I=\{1,2\}$).  That is, to show that $w^r$ does not normally generate $G$, it is sufficient by Theorem  \ref{hashhash} to show that the ideals $\{g_1^s,g_2^t,w^r\}^{\#\# }, \{g_1,g_2\}^{\#\#}\lhd \KF$ are not equal. 

 From Corollary \ref{power} we know that $\{g_1^s,g_2^t,w^r\}^{\#\# }$ is contained in the ideal generated by $P_s(\barl{g_1}), P_t(\barl{g_2}), P_r(\barl{w})$.  It is therefore sufficient to show that this ideal does not contain the elements $\barl{{g_1}^2}-1,\,\barl{{g_2}^2}-1 \in \{g_1,g_2\}^{\#\#} $.

\begin{lemma}
There exists a (possibly trivial) field extension $\F$ of $\kappa$ which contains roots $\mu_1,\mu_2$ of the polynomials $P_s,\,P_t$ respectively, as well as elements $s_1,\,s_2$ satisfying ${s_1}^2+{\mu_1}^2=1$ and ${s_2}^2+{\mu_2}^2=1$.  

Further, in any such field $\F$, we have $s_1,s_2$ invertible. \label{inv}\end{lemma}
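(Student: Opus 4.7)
The plan is to build $\mathbb{F}$ by a finite sequence of algebraic extensions of $\kappa$. Since $s > 1$ the polynomial $P_s$ has degree $s-1 \ge 1$, so we may first pass to an extension containing a root $\mu_1$ of $P_s$; similarly we extend further to obtain a root $\mu_2$ of $P_t$. Finally we adjoin elements $s_1, s_2$ which are square roots of $1 - \mu_1^2$ and $1 - \mu_2^2$ respectively. Each step is a (possibly trivial) algebraic extension, so such an $\mathbb{F}$ exists.

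The substantive content of the lemma is the invertibility claim, and this reduces to showing that $1 - \mu_i^2 \neq 0$ in $\mathbb{F}$, i.e.\ that no root of $P_s$ or $P_t$ equals $\pm 1$. For this I will establish, by induction on $n$, the closed forms
$$P_n(1) = n, \qquad P_n(-1) = (-1)^{n+1} n.$$
The base cases $n = 0, 1$ are read off from Definition \ref{poly}, and the inductive step uses the recursion rewritten as $P_{n+1}(x) = 2x P_n(x) - P_{n-1}(x)$, evaluated at $x = 1$ and $x = -1$ respectively. Since $\kappa$ has characteristic zero and $s, t \ge 2$, the values $P_s(\pm 1) = \pm s$ and $P_t(\pm 1) = \pm t$ are all nonzero in $\kappa$, so $\pm 1$ cannot be a root of either polynomial.

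Consequently, for any $\mathbb{F}$ as above, $\mu_i^2 \neq 1$ and therefore $s_i^2 = 1 - \mu_i^2$ is a nonzero element of $\mathbb{F}$; hence $s_i \neq 0$, and being a nonzero element of a field it is invertible. The only non-formal step is the evaluation $P_n(\pm 1) = \pm n$, which is a routine induction and is the sole place where characteristic zero is used.
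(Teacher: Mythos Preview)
Your proof is correct and follows essentially the same approach as the paper's: construct $\mathbb{F}$ by adjoining the required roots (using $\deg P_s = s-1 > 0$ and $\deg P_t = t-1 > 0$), then rule out $\mu_i = \pm 1$ via the identities $P_n(1) = n$ and $P_n(-1) = (-1)^{n+1} n$. Your explicit mention of the induction and of where characteristic zero enters is a bit more detailed than the paper, but the argument is the same.
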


\begin{proof}
The degrees of $P_s, P_t$ are $s-1,t-1>0$ respectively (see Definition \ref{poly}), so we may obtain $\F$ by (if necessary) adjoining to $\kappa$ the roots $\mu_1,\mu_2$, as well as the roots $s_1,s_2$ of the quadratics ${s_1}^2+{\mu_1}^2=1$ and ${s_2}^2+{\mu_2}^2=1$.

The only way $s_1$ or $s_2$ could be $0$ is if $\mu_1=\pm1$ or $\mu_2=\pm1$.  In that case we would have $P_s(1)=0$ or $P_s(-1)=0$ or $P_t(1)=0$ or $P_t(-1)=0$.

However $P_n(1)=n$ and $P_n(-1)=-1^{n+1}n$ (see Defintion \ref{poly}).
\end{proof}

\bigskip
Recall that $\KF$ is identified with $R_I$ by Theorem \ref{thm:iso}, and $R_I$ is merely $\kappa[\lambda_1,\lambda_2,m_{12}]$ by Lemma \ref{lem29}.  We have a ring homomorphism $\theta\colon \KF \to \F[x]$:
\begin{displaymath}\theta\colon
\begin{array}{ccc}
\lambda_1\,\,\,&\mapsto& \mu_1\,\,\,\quad\\
\lambda_2\,\,\,&\mapsto&\mu_2\,\,\,\quad\\
m_{12}&\mapsto&s_1s_2x\\
\end{array}
\end{displaymath}
where $\theta$ restricts to the identity on $\kappa$. (Here $\F[x]$ denotes the polynomial ring).

\bigskip

\begin{lemma}
If $G$  is  normally generated by a proper power $w^r$, then $P_r(\theta(\barl{w}))$ is a unit.\label{unit}
\end{lemma}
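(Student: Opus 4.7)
The plan is to chase the ideals identified in Corollary \ref{power} through the homomorphism $\theta$ and reduce the statement to a one-line divisibility claim in $\F[x]$. First, under the hypothesis that $w^r$ normally generates $G$, the set $\{g_1^s, g_2^t, w^r\}$ normally generates $F_I$, so by Theorem \ref{hashhash} the two ideals $\{g_1^s, g_2^t, w^r\}^{\#\#}$ and $\{g_1, g_2\}^{\#\#}$ coincide in $\KF$. The strategy is to exploit this equality by bounding the left-hand side from above (using $\theta$ to kill most of it) and bounding the right-hand side from below (by a visible unit).

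For the upper bound I would apply Corollary \ref{power} termwise to obtain the inclusion
$$\{g_1^s, g_2^t, w^r\}^{\#\#} \subset (P_s(\lambda_1),\, P_t(\lambda_2),\, P_r(\barl{w}))\lhd \KF.$$
Applying $\theta$, the first two generators map to $P_s(\mu_1)=0$ and $P_t(\mu_2)=0$ by the defining property of $\mu_1,\mu_2$ in Lemma \ref{inv}. Hence $\theta\bigl(\{g_1^s, g_2^t, w^r\}^{\#\#}\bigr) \subset \bigl(P_r(\theta(\barl{w}))\bigr)$ in $\F[x]$.

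For the lower bound, Corollary \ref{dotishashhash} and Lemma \ref{lem12}i give $\vec{g_1}\cdot\vec{g_1} = 1-\lambda_1^2 \in \{g_1, g_2\}^{\#\#}$. Applying $\theta$ sends this element to $1-\mu_1^2 = s_1^2$, which is a nonzero element of $\F$ by Lemma \ref{inv}. Combining this with the equality of ideals and the upper bound, we conclude $s_1^2 \in \bigl(P_r(\theta(\barl{w}))\bigr)$ in $\F[x]$, so $P_r(\theta(\barl{w}))$ divides a nonzero scalar. Since $\F[x]$ is a UFD with unit group $\F^\times$, this forces $P_r(\theta(\barl{w}))$ to be a unit.

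I do not expect any substantive obstacle: the argument is essentially bookkeeping, and the whole point of defining $\theta$ was to arrange that the cyclic relators $g_1^s, g_2^t$ are annihilated while an invertible witness from $\{g_1, g_2\}^{\#\#}$ survives, leaving the contribution from $w^r$ as the only remaining content of the ideal.
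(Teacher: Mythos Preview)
Your argument is correct and follows essentially the same route as the paper: both use Corollary~\ref{power} to bound $\{g_1^s,g_2^t,w^r\}^{\#\#}$ inside $(P_s(\lambda_1),P_t(\lambda_2),P_r(\barl{w}))$, kill the first two generators under $\theta$, and then exhibit an element of $\{g_1,g_2\}^{\#\#}$ mapping to a nonzero scalar. The only cosmetic difference is your choice of witness $\vec{g_1}\cdot\vec{g_1}=1-\lambda_1^2$ (via Corollary~\ref{dotishashhash}) in place of the paper's $\barl{g_1^2}-1$; these differ by the scalar $-2$, so the content is identical.
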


\begin{proof}
By construction $\theta(P_s(\barl{g_1}))=0$ and  $\theta(P_t(\barl{g_2}))=0$.  By Lemma \ref{lem10}ii:$$\theta(\barl{{g_1}^2}-1)=2\theta(\barl{g_1}^2-1)=-2{s_1}^2\,\qquad{\rm  and}\qquad \theta(\barl{{g_2}^2}-1)=2\theta(\barl{g_2}^2-1)=-2{s_2}^2.$$
Here we know $-2{s_1}^2$, $-2{s_2}^2$ are invertible by Lemma \ref{inv}.  Thus the only way that the ideal generated by $P_s(\barl{g_1}), P_t(\barl{g_2}), P_r(\barl{w})$ could contain the elements $\barl{{g_1}^2}-1,\,\barl{{g_2}^2}-1$ is if $\theta(P_r(\barl{w}))=P_r(\theta(\barl{w}))$ is a unit in $\F[x]$.
\end{proof}

As $P_r$ has strictly positive degree $r-1$, in order to show that $P_r(\theta(\barl{w}))$ is not a degree 0 polynomial in $\F[x]$, it suffices to show that $\theta(\barl{w})$ is not a degree 0 polynomial in $\F[x]$. We therefore examine $\barl{w}$ more closely.

Recall that our choice of generators $g_1,g_2$ and choice of word to represent $w$ implies that in $F_I$ we have $w=g_1g_2 C$, for some $C$ in the commutator subgroup $[F_I,F_I]\lhd F_I$.  Thus $ \barl{w}-\barl{g_1g_2}                 \in                 {\rm ker}(h^\#)$, where $h\colon F_I\to \Z \times \Z$ is the abelianisation homomorphism.

\begin{lemma}{The ideal ${\rm ker}(h^\#)$ is generated by $[\vecl{g_1},\vecl{g_2}]\cdot[\vecl{g_1},\vecl{g_2}]$.}
\label{lem36}\end{lemma}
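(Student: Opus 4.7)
The plan is to realise $\ker(h^\#)$ as the ideal $\{g_1^{-1}g_2^{-1}g_1g_2\}^\#$ and then cut down the generating set using the alternating property of the scalar triple product.

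First I would observe that $h\colon F_I \to \Z\times\Z$ is surjective with kernel the commutator subgroup $[F_I,F_I]$. Since $F_I$ is the free group on the two generators $g_1,g_2$, its commutator subgroup is normally generated by the single commutator $g_1^{-1}g_2^{-1}g_1g_2$. Applying Theorem \ref{thm:norm}, we get $\ker(h^\#)=\{g_1^{-1}g_2^{-1}g_1g_2\}^\#$, and then by Lemma \ref{lem:com} this ideal equals the ideal $I\lhd \KF$ generated by all scalar triple products $[\vec{g_1},\vec{g_2}]\cdot \vec{c}$ as $\vec{c}$ ranges over $\Lambda_{F_I}$.

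Next I would pick an explicit generating set for $\Lambda_{F_I}$ as a $\KF$-module. By Lemma \ref{lem23} the set $\widehat{B}=\{1,\vec{g_1},\vec{g_2},[\vec{g_1},\vec{g_2}]\}$ generates $A_{F_I}$ over $\KF$, and since $1\in\KF$ while $\vec{g_1},\vec{g_2},[\vec{g_1},\vec{g_2}]\in\Lambda_{F_I}$, the decomposition of Lemma \ref{lem2} shows that $\{\vec{g_1},\vec{g_2},[\vec{g_1},\vec{g_2}]\}$ generates $\Lambda_{F_I}$ over $\KF$. By bilinearity of the dot product (Lemma \ref{lem13}i), the ideal $I$ is therefore generated by the three elements
\[
[\vec{g_1},\vec{g_2}]\cdot\vec{g_1},\qquad [\vec{g_1},\vec{g_2}]\cdot\vec{g_2},\qquad [\vec{g_1},\vec{g_2}]\cdot[\vec{g_1},\vec{g_2}].
\]

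Finally, the alternating property of the scalar triple product (Lemma \ref{lem13}iv) kills the first two of these, as they each have a repeated argument. Hence $I$, and therefore $\ker(h^\#)$, is generated by the single element $[\vec{g_1},\vec{g_2}]\cdot[\vec{g_1},\vec{g_2}]$, as required. There is no real obstacle here; the content of the lemma is simply the observation that among the three module generators of $\Lambda_{F_I}$, only $[\vec{g_1},\vec{g_2}]$ itself gives a non-trivial dot product with $[\vec{g_1},\vec{g_2}]$.
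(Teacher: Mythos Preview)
Your proof is correct and follows essentially the same route as the paper: identify $\ker(h^\#)$ with $\{g_1^{-1}g_2^{-1}g_1g_2\}^\#$ via Theorem~\ref{thm:norm}, invoke Lemma~\ref{lem:com} to rewrite it in terms of scalar triple products, and then use the generating set $\{\vec{g_1},\vec{g_2},[\vec{g_1},\vec{g_2}]\}$ for $\Lambda_{F_I}$ together with the alternating property to reduce to the single generator. If anything, you are slightly more careful than the paper in explaining why $\{\vec{g_1},\vec{g_2},[\vec{g_1},\vec{g_2}]\}$ generates $\Lambda_{F_I}$ (via the splitting $A_{F_I}=\KF\oplus\Lambda_{F_I}$), whereas the paper simply asserts this by reference to Lemma~\ref{lem21}.
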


\begin{proof}
The kernel of $h$ is normally generated by $g_1^{-1}g_2^{-1}g_1g_2\in F_I$.  Thus by Theorem \ref{thm:norm}, we have ${\rm ker}(h^\#)=\{g_1^{-1}g_2^{-1}g_1g_2 \}^\#$.

By Lemma \ref{lem21} we have the following generating set for $\Lambda_{F_I}$ over $\KF$:

\noindent $\{\vecl{g_1},\,\vecl{g_2},\, [\vecl{g_1},\vecl{g_2}]\}$.  Thus by Lemma \ref{lem:com} we have $\{g_1^{-1}g_2^{-1}g_1g_2 \}^\#$ generated by:
$$
[\vecl{g_1},\vecl{g_2}]\cdot\vecl{g_1}=0,\qquad[\vecl{g_1},\vecl{g_2}]\cdot \vecl{g_2}=0,\qquad[\vecl{g_1},\vecl{g_2}]\cdot[\vecl{g_1},\vecl{g_2}]
$$
\end{proof}

\begin{lemma}
For some polynomial $q(x)\in \F[x]$ we have: $$\theta({\bar{w}})=-s_1s_2x+\mu_1\mu_2+(1-x^2)q(x).$$\label{wbar}
\end{lemma}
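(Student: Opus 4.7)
The plan is to combine the decomposition $w = g_1 g_2 C$ with $C\in[F_I,F_I]$ with the explicit description of $\mathrm{ker}(h^\#)$ from Lemma \ref{lem36}, and then track everything through the homomorphism $\theta$.

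First I would compute $\theta(\overline{g_1g_2})$ directly. By Lemma \ref{lem10}i and Lemma \ref{lem12}i we have $\overline{g_1g_2} = \lambda_1\lambda_2 - m_{12}$ in $R_I \cong \KF$ (this was already observed in the discussion preceding Lemma \ref{compdesc}). Applying $\theta$ gives $\theta(\overline{g_1g_2}) = \mu_1\mu_2 - s_1s_2 x$, which accounts for the first two terms of the claimed formula.

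Next I would show that the remaining contribution lives in the ideal generated by $(1-x^2)$. Since $w$ and $g_1g_2$ have the same image in the abelianisation $\Z\times\Z$ (by the hypothesis on the total index of each letter in $w$), the ring homomorphism $h^\#$ sends $\bar w - \overline{g_1 g_2}$ to zero, so $\bar w - \overline{g_1g_2} \in \mathrm{ker}(h^\#)$. By Lemma \ref{lem36}, $\mathrm{ker}(h^\#)$ is the principal ideal generated by $[\vec{g_1},\vec{g_2}]\cdot[\vec{g_1},\vec{g_2}]$, so we can write $\bar w - \overline{g_1g_2} = \alpha \cdot [\vec{g_1},\vec{g_2}]\cdot[\vec{g_1},\vec{g_2}]$ for some $\alpha\in\KF$. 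Using Lemma \ref{lem:comdotcom}i together with R2, the generator of this ideal expands as $(1-\lambda_1^2)(1-\lambda_2^2) - m_{12}^2$, and applying $\theta$ gives
$$
\theta\bigl([\vec{g_1},\vec{g_2}]\cdot[\vec{g_1},\vec{g_2}]\bigr) = (1-\mu_1^2)(1-\mu_2^2) - s_1^2 s_2^2 x^2 = s_1^2 s_2^2(1-x^2),
$$
where the last equality uses the defining relations $s_i^2 + \mu_i^2 = 1$ of Lemma \ref{inv}.

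Combining these two calculations yields
$$
\theta(\bar w) = \mu_1\mu_2 - s_1 s_2 x + s_1^2 s_2^2(1-x^2)\,\theta(\alpha),
$$
and setting $q(x) = s_1^2 s_2^2\, \theta(\alpha) \in \F[x]$ gives the desired expression. There is no real obstacle here: the only mild point is verifying that the two ingredients (the expression for $\overline{g_1g_2}$ and the generator of $\mathrm{ker}(h^\#)$) behave as expected under $\theta$, and this is just direct substitution into the formulas already established in Sections \ref{iden} and \ref{desc}.
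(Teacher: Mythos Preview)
Your proposal is correct and follows essentially the same approach as the paper: both arguments use the fact (noted just before Lemma \ref{lem36}) that $\bar w - \overline{g_1g_2}\in\ker(h^\#)$, invoke Lemma \ref{lem36} to write this difference as a multiple of $[\vec{g_1},\vec{g_2}]\cdot[\vec{g_1},\vec{g_2}]$, expand $\overline{g_1g_2}$ via Lemma \ref{lem12}i and the generator via Lemma \ref{lem:comdotcom}i, and then apply $\theta$. The only cosmetic difference is that the paper expands everything inside $\KF$ before applying $\theta$, whereas you apply $\theta$ to each piece separately.
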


\begin{proof}
By Lemma \ref{lem36} for some $Q\in \KF$ we have: $\bar{w}=\barl{g_1g_2}+ [\vecl{g_1},\vecl{g_2}]\cdot[\vecl{g_1},\vecl{g_2}]Q$.
Applying Lemma \ref{lem12}i and Lemma \ref{lem:comdotcom}i to this we get:$$
\bar{w}=-(\vecl{g_1}\cdot \vecl{g_2})+\barl{g_1}\,\barl{g_2}+((1-\barl{g_1}^2)(1-\barl{g_2}^2)-(\vecl{g_1}\cdot\vecl{g_2})^2)Q.
$$
Applying $\theta$ to both sides we obtain the desired identity.
\end{proof}

Combining Lemma \ref{unit} and Lemma \ref{wbar} we have:

\begin{theorem}\label{reduction}
Let $G=C_s \star C_t$ and let $w^r$ be a proper power of some element $w \in G$, for $r,s,t>1$.  For a field $\kappa$  let $\F$ and  $\mu_1,\mu_2,s_1,s_2\in \F$ and $q(x)\in\F[x]$ be as previously defined.  We have an implication {\bf B2} $\implies$ {\bf B1} where {\bf B1}, {\bf B2} are the statements{\rm:} 

\bigskip
\noindent {\bf B1}{\rm :}  The proper power $w^r$ does not normally generate $G$.

%\bigskip
\noindent {\bf B2}{\rm :}  The polynomial $P_r(-s_1s_2x+\mu_1\mu_2+(1-x^2)q(x))\in \F[x]$ is not a unit.
\end{theorem}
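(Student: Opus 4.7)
The plan is simply to combine the two preceding lemmas by contraposition. Lemma \ref{unit} tells us that if $w^r$ normally generates $G$, then $P_r(\theta(\bar{w}))$ must be a unit in $\F[x]$. Its contrapositive says: if $P_r(\theta(\bar{w}))$ is not a unit in $\F[x]$, then $w^r$ does not normally generate $G$, which is exactly the conclusion \textbf{B1}.

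So it remains only to identify the polynomial appearing in \textbf{B2} with $P_r(\theta(\bar{w}))$. This identification is immediate from Lemma \ref{wbar}, which provides the explicit formula
\[
\theta(\bar{w}) = -s_1s_2x + \mu_1\mu_2 + (1-x^2)q(x)
\]
for some $q(x)\in\F[x]$. Substituting this expression into $P_r$ (and using that $\theta$ is a ring homomorphism, so $\theta(P_r(\bar{w})) = P_r(\theta(\bar{w}))$) yields precisely the polynomial whose non-unit status is hypothesised in \textbf{B2}.

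There is essentially no obstacle here: all the work has been done in Lemmas \ref{unit} and \ref{wbar}. The only minor point worth being careful about is the direction of the implication, \textbf{B2} $\Rightarrow$ \textbf{B1}, which forces us to invoke Lemma \ref{unit} in its contrapositive form. The proof should therefore amount to two or three lines: assume \textbf{B2}, use Lemma \ref{wbar} to rewrite the polynomial in \textbf{B2} as $P_r(\theta(\bar{w}))$, and then apply the contrapositive of Lemma \ref{unit} to conclude \textbf{B1}.
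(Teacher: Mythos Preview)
Your proposal is correct and matches the paper's own proof exactly: the paper simply states that the theorem follows by combining Lemma~\ref{unit} and Lemma~\ref{wbar}, which is precisely your argument via the contrapositive of Lemma~\ref{unit} together with the explicit form of $\theta(\bar{w})$ from Lemma~\ref{wbar}.
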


However statement {\bf B2} is an immediate consequence of the elementary result:

\begin{lemma}
Let $\F$ be any field and let $\mu_1,\mu_2,s_1,s_2\in \F$ with $s_1,s_2 \neq 0$ and let $q(x)\in \F[x]$.  For $r>1$ we have that the polynomial  \begin{eqnarray}
P_r(-s_1s_2x+\mu_1\mu_2+(1-x^2)q(x))\in \F[x]\label{polydeg}\end{eqnarray}
 has degree at least $r-1$.  Thus in particular it is not a unit in $\F[x]$.
\label{com:res}
\end{lemma}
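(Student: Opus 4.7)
The plan is to reduce the statement to a degree computation by exploiting the recurrence defining $P_r$. First I would prove by induction on $n$ that for $n \geq 1$, the polynomial $P_n(x)$ has degree exactly $n-1$ with leading coefficient $2^{n-1}$. The base cases $P_0 = 0$, $P_1 = 1$, $P_2(x) = 2x$ follow directly from Definition \ref{poly}, and the recurrence $P_{n+1}(x) = 2xP_n(x) - P_{n-1}(x)$ makes the inductive step routine: the leading term of $P_{n+1}$ comes from $2x \cdot 2^{n-1}x^{n-1} = 2^{n}x^{n}$, and no cancellation can occur because $P_{n-1}$ has strictly smaller degree. Since $\F$ contains $\kappa$ and $\kappa$ has characteristic zero, $2^{n-1} \neq 0$ in $\F$.

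Next I would examine $f(x) := -s_1s_2 x + \mu_1 \mu_2 + (1 - x^2)q(x) \in \F[x]$ and show $\deg f \geq 1$ by a case split. If $q = 0$, then $f(x) = -s_1 s_2 x + \mu_1 \mu_2$ has degree exactly $1$ because the hypothesis $s_1, s_2 \neq 0$ forces $s_1 s_2 \neq 0$. If $q \neq 0$ with $\deg q = d \geq 0$, then $(1-x^2)q(x)$ has degree $d + 2 \geq 2$, and its leading term cannot be cancelled by the lower-degree contribution $-s_1 s_2 x + \mu_1 \mu_2$; hence $\deg f = d + 2 \geq 2$. In either case $n := \deg f \geq 1$ and the leading coefficient $a$ of $f$ is nonzero.

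Finally I would conclude by substitution. Since $P_r$ has degree $r - 1$ with nonzero leading coefficient $2^{r-1}$, the composition $P_r(f(x))$ has degree $n(r-1) \geq r - 1$ and leading coefficient $2^{r-1} a^{r-1}$, which is nonzero in $\F$ (characteristic zero, $a \neq 0$). Because $r > 1$, the degree is at least $1$, so the polynomial is not a unit in $\F[x]$. There is no real obstacle here: the only mild subtlety is separating the $q = 0$ case (where the degree of $f$ comes from the linear term and requires $s_1 s_2 \neq 0$) from the $q \neq 0$ case (where the degree of $f$ is governed by $(1-x^2)q(x)$, and the hypothesis on $s_1, s_2$ is not needed for the degree bound). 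This is precisely why the assumption $s_1, s_2 \neq 0$ guaranteed by Lemma \ref{inv} is essential.
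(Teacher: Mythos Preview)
Your proposal is correct and follows essentially the same approach as the paper's proof: both argue that $P_r$ has degree $r-1$, then split into the cases $q=0$ (where the inner polynomial has degree $1$ thanks to $s_1s_2\neq 0$) and $q\neq 0$ (where it has degree $\geq 2$), and conclude by composition. Your write-up simply supplies the details the paper leaves implicit, in particular the inductive computation of the leading coefficient $2^{r-1}$ and the observation that characteristic zero prevents its vanishing.
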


\begin{proof}
The polynomial $P_r$ has degree $r-1$ (see Definition \ref{poly}).  Thus if $q(x)=0$ then (\ref{polydeg}) has degree $r-1$ and if $q(x)\neq 0$ then  (\ref{polydeg}) has degree at least $2(r-1)$.
\end{proof}

Thus we have an alternative proof of the following theorem:

\begin{theorem}[Boyer]\label{Boy}
The free product $C_s \star C_t$ cannot be normally generated by a proper power $w^r$, where $r,s,t>1$.
\end{theorem}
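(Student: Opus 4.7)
The plan is a short deduction from the two results immediately preceding the theorem: Theorem \ref{reduction} and Lemma \ref{com:res} together furnish the theorem, so the only real task is to verify that the hypotheses of Theorem \ref{reduction} can always be arranged.

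I would argue by contradiction. Suppose $w^r$ normally generates $G=C_s \star C_t$. Let $f\colon G \to C_s \times C_t$ denote the abelianisation. Since normal closure commutes with quotients, $f(w)$ must generate $C_s \times C_t$, and hence we may choose generators $g_1$ of $C_s$ and $g_2$ of $C_t$ inside $G$ so that $f(w)=(g_1,g_2)$. Choosing a word representing $w$ in the letters $g_1, g_2$ and, if necessary, inserting factors of $g_1^s$ and $g_2^t$, we may arrange that the total $g_1$-index of the word and its total $g_2$-index are both equal to $1$. We are now in exactly the configuration under which Theorem \ref{reduction} was stated.

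Theorem \ref{reduction} then provides the implication B2 $\implies$ B1, where B1 is the assertion that $w^r$ does not normally generate $G$. But Lemma \ref{com:res} establishes B2 unconditionally: the polynomial $P_r(-s_1 s_2 x + \mu_1 \mu_2 + (1-x^2)q(x)) \in \F[x]$ has degree at least $r-1 \geq 1$ and therefore cannot be a unit. Consequently B1 holds, contradicting the assumption that $w^r$ normally generates $G$.

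There is no real obstacle at this final stage; the substantive work has all been packaged into the preceding machinery, namely the identification $\KF \cong R_I = \kappa[\lambda_1,\lambda_2,m_{12}]$ from Lemma \ref{lem29}, the description of $\bar w$ modulo $\mathrm{ker}(h^{\#})$ coming from Lemma \ref{lem36} and Lemma \ref{lem:comdotcom}, the use of Corollary \ref{power} to bound $\{g_1^s,g_2^t,w^r\}^{\#\#}$, and the construction of the evaluation homomorphism $\theta$ designed to kill $P_s(\barl{g_1})$ and $P_t(\barl{g_2})$ while keeping $\theta(\barl{g_i^2}-1)$ invertible (Lemma \ref{inv}). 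Once these are in place, the deduction is essentially a one-line composition of B2 and the B2 $\implies$ B1 implication.
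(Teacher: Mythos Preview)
Your proposal is correct and follows exactly the route the paper takes: the paper does not give a separate argument for Theorem~\ref{Boy} but simply records it as the combination of Theorem~\ref{reduction} (the implication \textbf{B2} $\Rightarrow$ \textbf{B1}) with Lemma~\ref{com:res} (which establishes \textbf{B2}). Your contradiction framing and verification that the hypotheses of Theorem~\ref{reduction} can be arranged merely make explicit the setup the paper carried out in the paragraphs preceding that theorem.
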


Intuitively speaking, we wished to show that repeating a word $r$ times would lead to an element in $G$ which was too `big' to normally generate the whole group.  However, because of the mysterious way in which words can unexpectedly appear in the normal closure of other words, this intuition was hard to pin down.  Applying the functor $\#$ reduced the difficult to prove statement {\bf B1} to the elementary statement {\bf B2}, that a polynomial of positive degree is too `big' to be a unit.

As the number of generators needed to generate the groups we consider goes up, so too does the complexity of the commutative algebra we are left with after applying the functor $\#$.  In the next section we consider a result involving the free product of $3$ cyclic groups.  The statement in commutative algebra which this reduces too is concise and it is possible that it may be provable by elementary commutative algebra.  However, unlike  {\bf B2},   it is not immediately obvious that this is the case.

\section{The Scott-Wiegold conjecture}\label{Scott}

The Scott-Wiegold conjecture \cite[Problem 5.53]{Kour} was an open problem for forty years, before it was proven \cite{Howi} by Jim Howie.  Thus it  illustrates how even very difficult problems in group theory can be translated into (hopefully simpler) problems in commutative algebra, by considering the commutative version of the group ring. 

We take the statement of the Scott-Wiegold conjecture ({\bf SW1}) and reduce it to a statement purely about commutative rings ({\bf SW2}) analogously to Theorem \ref{reduction}.  However this time it is not immediately obvious if it is just a case of a general result in commutative algebra, analogous to Lemma \ref{com:res}.

We will leave a more  general result as a conjecture (Conjecture \ref{alg:conj}), though we expect that it is true at least in the case when we choose our field to be $\R$.  As in Theorem \ref{reduction}, the statement {\bf SW2} need only be true over one field to imply {\bf SW1}.

Fix $r,s,t>1$ and let $G=C_r \star C_s \star C_t$.  The Scott-Wiegold Conjecture states that for any $w\in G$, the group $G$ is not normally generated by $w$.  

Suppose that it was and (as in \S\ref{cyc}) consider the abelianisation homomorphism: $$f\colon G \to C_r \times C_s \times C_t.$$  If $w\in G$ did normally generate $G$ then $f(w)$ would generate  $C_r\times C_s \times C_t$, so we may write $f(w)=(g_1,g_2,g_3)$ where $g_1$ is a generator for $C_r$, $g_2$ is a generator for $C_s$ and $g_3$ is a generator for $C_t$.  We then have: $$G=\langle g_1, g_2,g_3 \vert\, g_1^r, g_2^s,g_3^t \rangle.$$

Let $I=\{1,2.3\}$ so that $F_I$ denotes the group of words in the letters $g_1, g_2,g_3$ (and their inverses).  Pick a word in $F_I$ to represent $w$, so that the total index of the letters $g_1,g_2,g_3$ is 1 in each case (we know $f(w)=(g_1,g_2,g_3)\in C_r \times C_s \times C_t$, so this can be achieved by adding the correct powers of $g_1^r,g_2^s,g_3^t$ to any word representing $w$).  We abuse notation by also denoting this word $w\in F_I$.

If $w\in G$ did normally generate $G$, then we would have that the normal closure of $g_1^r, g_2^s,g_3^t,w\in F_I$ was the whole group.  That is, to show that $w$ does not normally generate $G$, it is sufficient by Theorem  \ref{hashhash} to show that: \begin{eqnarray}\{g_1^r, g_2^s,g_3^t,w\}^{\#\#}\neq\{g_1,g_2,g_3\}^{\#\#} \label{neqideals} \end{eqnarray} as ideals in $\KF=R_I$.

\begin{lemma}
$R_I$ is the polynomial ring in 7 variables, subject to a single relation:$$
R_I=\kappa[\lambda_1,\lambda_2,\lambda_3,m_{12},m_{23},m_{31},w_{123}]/{\rm Relation}
$$ where {\rm Relation} is given by:
$$w_{123}^2=\left| 
\begin{array}{ccc} 
1-\lambda_1^2& m_{12}&m_{31}\\
m_{12}& 1-\lambda_2^2&m_{23}\\
m_{31}& m_{23}&1-\lambda_3^2
\end{array} \right|$$
\end{lemma}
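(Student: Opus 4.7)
The plan is to carefully examine what each of the defining relations R1--R4 for $R_I$ contributes when $|I|=3$, and show that they collapse to exactly the stated presentation. The starting point is the polynomial ring generated by $\lambda_i$ ($i\in\{1,2,3\}$), the nine $m_{ij}$, and the twenty-seven $w_{ijk}$. The task is to prune this down to seven generators and one relation.

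First I would use R1 to cut down the generators. The symmetry $m_{ij}=m_{ji}$ leaves only $m_{12},m_{23},m_{31}$ as independent off-diagonal $m$'s. For the $w_{ijk}$, the two identities $w_{ijk}=w_{kij}$ (cyclic invariance) and $w_{ijk}=-w_{kji}$ (sign change under reversal) together imply that $w_{ijk}$ depends only on the sign of the permutation taking $(i,j,k)$ to a fixed ordering, and that $w_{ijk}=0$ whenever two of $i,j,k$ coincide (since then some transposition fixes the tuple yet should negate it). Hence every $w_{ijk}$ equals $\pm w_{123}$ or $0$, and a single generator $w_{123}$ suffices. Next, R2 eliminates each diagonal $m_{ii}$ as $1-\lambda_i^2$, so no new generator is contributed. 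This leaves exactly the seven generators $\lambda_1,\lambda_2,\lambda_3,m_{12},m_{23},m_{31},w_{123}$.

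Next I would show R3 is vacuous in this setting. Since $I$ has only three elements, any quadruple $(i,j,k,l)$ must have a repeat. By symmetry it is enough to consider (a) $i=j$, in which case the first two terms of R3 cancel and the last two both contain a $w$ with repeated indices, hence vanish; and (b) $i=k$, in which case $w_{iji}=0$, $w_{ikl}=w_{iil}=0$, and the remaining two terms $w_{jil}m_{is}+w_{ijl}m_{ks}$ cancel using $w_{jil}=-w_{ijl}$ (derived from R1). The other coincidence patterns reduce to these by the symmetries of R1.

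Finally I would analyse R4. If either $\{i,j,k\}$ or $\{l,s,t\}$ contains a repeat, the left side is $0$ by R1, and the right side is a determinant of a matrix with two equal rows (or columns), hence also $0$. So the only substantive cases occur when both index triples are permutations of $\{1,2,3\}$. Permuting the rows or columns of the determinant introduces the same sign as permuting indices on the $w$'s, so all such instances are equivalent to the single case $w_{123}\cdot w_{123}=\det M$ with $M_{ab}=m_{ab}$; substituting $m_{ii}=1-\lambda_i^2$ yields exactly the stated relation. The only mild bookkeeping obstacle is verifying that the sign variants of R4 are genuinely equivalent to this one relation rather than producing independent constraints, but this follows at once from the alternating behaviour of $w_{ijk}$ under index permutations matching the alternating behaviour of $\det$ under row/column permutations.
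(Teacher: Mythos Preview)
Your proposal is correct and follows essentially the same approach as the paper: use R1 and R2 to reduce to the seven generators, argue that R3 is vacuous when $|I|=3$ because two of $i,j,k,l$ must coincide, and observe that all nontrivial instances of R4 collapse to the single determinantal relation by the alternating behaviour of both sides. The only cosmetic difference is that the paper disposes of R3 in one line by noting that R1 makes the left-hand side alternating in $i,j,k,l$ (so any repetition forces it to vanish), whereas you verify this by explicit case analysis on which pair coincides; your appeal to ``symmetries of R1'' for the remaining coincidence patterns is precisely that alternating observation.
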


\begin{proof}
Recall from \S\ref{desc} the relations R1, R2, R3, R4 which $R_I$ is subject to.  By R1 we know that $w_{123}$ is up to sign the only non-zero $w_{ijk}$ and that for $i\neq j$, $m_{ij}$ must be $m_{12}$, $m_{23}$ or $m_{31}$.  By R2  we may express the $m_{ii}$ in terms of the $\lambda_i$.

Then R4 implies Relation, taking $i,j,k$ and $l,s,t$ to be $1,2,3$ respectively.  Noting that the determinant of a matrix is alternating in the column vectors of the matrix (resp. the row vectors of a matrix), we see that assigning different values to the indices, we get no new relations from R4.

Finally note that two of the indices $i,j,k,l$  must be identical in R3.  One may check that R1 implies that the left hand side of R3 is alternating in $i,j,k,l$.  Thus whichever values are assigned to the indices $i,j,k,l$, R3 merely gives $0=0$.
\end{proof}

For $i=1,2,3$, we have that the image of $\lambda_i$ in $\KG$ is algebraic over $\kappa$ (apply Lemma \ref{cor1} to $\barl{g_i^n}$ to see that $\lambda_iP_n(\lambda_i)-P_{n-1}(\lambda_i)-1 \mapsto 0\in \KG$, for $n=r,s,{\rm or\,}t$).  As in \S\ref{cyc}, we `absorb' these `pre-algebraic' elements by extending the field $\kappa$.

\begin{lemma}
There exists a (possibly trivial) field extension $\F$ of $\kappa$ which contains roots $\mu_1,\mu_2,\mu_3$ of the polynomials $P_r,P_s,\,P_t$ respectively, as well as elements $s_1,\,s_2,s_3$ satisfying ${s_i}^2+{\mu_i}^2=1, \,\, i=1,2,3$.  
Further, in any such field the $s_i$  are invertible.
 \label{inverse}\end{lemma}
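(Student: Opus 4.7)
The plan is to essentially reproduce the argument of Lemma \ref{inv}, upgraded from two polynomials to three; the extra factor causes no real difficulty. First I would observe that by Definition \ref{poly}, the polynomials $P_r, P_s, P_t$ have strictly positive degrees $r-1, s-1, t-1$ (since $r,s,t>1$), so each has at least one root in some algebraic closure of $\kappa$. I would then build $\F$ by adjoining to $\kappa$ (if necessary) such roots $\mu_1, \mu_2, \mu_3$ of $P_r, P_s, P_t$ respectively, and then further adjoining elements $s_1, s_2, s_3$ satisfying the quadratic relations $s_i^2 + \mu_i^2 = 1$. Each such adjunction is an algebraic extension of finite degree, so the resulting $\F$ exists.

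For the invertibility claim, I would proceed by contradiction: suppose $s_i = 0$ in some field $\F$ containing the required elements. Then $\mu_i^2 = 1$, so $\mu_i = \pm 1$. Since $\mu_i$ is a root of $P_n$ for the appropriate $n \in \{r, s, t\}$, this would force $P_n(1) = 0$ or $P_n(-1) = 0$. However, a straightforward induction from the recursion of Definition \ref{poly} (the same computation cited in the proof of Lemma \ref{inv}) yields $P_n(1) = n$ and $P_n(-1) = (-1)^{n+1} n$. Since $\kappa$ has characteristic zero and $n > 1$, both of these are nonzero in $\F$, yielding the desired contradiction.

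There is no genuine obstacle here: the argument is essentially identical to that of Lemma \ref{inv}, with the third polynomial $P_r$ treated symmetrically alongside $P_s$ and $P_t$. The only point worth emphasizing is that the characteristic-zero hypothesis on $\kappa$ (fixed at the start of Section \ref{const}) is what guarantees $P_n(\pm 1) \neq 0$; without it one would have to exclude the prime divisors of $r, s, t$. Given the close parallel to Lemma \ref{inv}, the proof can be stated very briefly, perhaps simply by citing that lemma and noting that the identical reasoning applies to the third index.
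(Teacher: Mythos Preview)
Your proposal is correct and matches the paper's own proof, which simply says ``Same as for Lemma \ref{inv}'' and notes that $\mu_i \neq \pm 1$ because $\pm 1$ are not roots of $P_r, P_s, P_t$. You have spelled out in slightly more detail exactly what the paper intends.
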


\begin{proof}
Same as for Lemma \ref{inv}: We note that for $i=1,2,3$ we have $\mu_i \neq \pm1$, as -1,1 are not roots of $P_r$, $P_s$ or $P_t$.
\end{proof}

Let $A=\F[x,y,u,v]/ \langle(1-x^2)(1-y^2)-(u^2+v^2)\rangle$.  We define a map $\theta\colon\KF \to A$:

\bigskip
\begin{displaymath}
\theta\colon
\begin{array}{ccc}
\lambda_1\,\,\,&\mapsto& \mu_1\,\,\,\quad\\
\lambda_2\,\,\,&\mapsto&\mu_2\,\,\,\quad\\
\lambda_3\,\,\,&\mapsto&\mu_3\,\,\,\quad\\
m_{12}&\mapsto&s_1s_2x\\
m_{31}&\mapsto&s_3s_1y\\
m_{23}&\mapsto&s_2s_3(u+xy)\\
w_{123}&\mapsto&s_1s_2s_3v
\end{array}
\end{displaymath}
where $\theta$ is a ring homomorphism which restricts to the identity on $\kappa$.

\begin{lemma} The map $\theta$ is well defined.\end{lemma}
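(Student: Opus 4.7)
The plan is to observe that $\theta$ is specified as a ring homomorphism out of $R_I$, which is a polynomial ring modulo the single relation $w_{123}^2 = \det M$, where $M$ is the $3\times 3$ symmetric matrix whose diagonal entries are $1-\lambda_i^2$ and whose off-diagonal entries are the $m_{ij}$. Since $A$ is a commutative $\F$-algebra, $\theta$ is determined as a map of polynomial rings by the given assignments, and the only thing to check is that the image of Relation holds in $A$, i.e.\ that $\theta(w_{123})^2 - \det\theta(M) = 0 \in A$.

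First I would substitute $1-\mu_i^2 = s_i^2$ (from Lemma \ref{inverse}) into the diagonal entries of $\theta(M)$. This gives
\[
\theta(M) = \begin{pmatrix} s_1^2 & s_1 s_2 x & s_1 s_3 y \\ s_1 s_2 x & s_2^2 & s_2 s_3(u+xy) \\ s_1 s_3 y & s_2 s_3(u+xy) & s_3^2 \end{pmatrix} = D N D,
\]
where $D = \operatorname{diag}(s_1,s_2,s_3)$ and
\[
N = \begin{pmatrix} 1 & x & y \\ x & 1 & u+xy \\ y & u+xy & 1 \end{pmatrix}.
\]
Thus $\det\theta(M) = s_1^2 s_2^2 s_3^2 \det N$, while $\theta(w_{123})^2 = s_1^2 s_2^2 s_3^2 v^2$. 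Since the $s_i$ are nonzero scalars, it is enough to check $\det N = v^2$ in $A$.

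The second step is the direct computation of $\det N$. Expanding along the first row and combining the cross terms, the $(u+xy)^2$ contribution from the $(1,1)$-cofactor cancels against the $2xy(u+xy)$ contribution from the remaining cofactors, leaving
\[
\det N = (1-x^2)(1-y^2) - u^2.
\]
By the defining relation of $A$, this equals $v^2$. Hence $\theta(w_{123})^2 = \det\theta(M)$ in $A$, so Relation lies in the kernel of the induced map from the polynomial ring $\kappa[\lambda_i, m_{ij}, w_{123}]$ to $A$, and $\theta$ descends to a well-defined ring homomorphism on $R_I = \KF$.

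The only step requiring genuine work is the determinant expansion, and the key piece of good fortune is that the cross-term $xy$ placed inside the $(2,3)$-entry is exactly what makes the odd powers of $u$ cancel, leaving a polynomial that matches the relation cutting out $A$. No obstacle is anticipated beyond carefully tracking signs in that expansion.
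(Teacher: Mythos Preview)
Your proof is correct and follows essentially the same route as the paper: reduce well-definedness to checking that Relation maps to zero, pull out the factors $s_i$ from the rows and columns of the image matrix to reduce to $\det N = v^2$, and then expand the determinant to obtain $(1-x^2)(1-y^2)-u^2$, which equals $v^2$ by the defining relation of $A$. Your $DND$ factorisation makes the clearing of the $s_i$ factors slightly more explicit than the paper's one-line remark, but the argument is otherwise identical.
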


\begin{proof}
It is sufficient to verify that $\theta$ applied to both sides of Relation results in a true identity in $A$.  That is, we must check that:
$$v^2=\left| 
\begin{array}{ccc} 
1& x&y\\
x& 1&u+xy\\
y& u+xy&1
\end{array} \right|$$
Note we have cleared the factors of $s_1^2,s_2^2,s_3^2$ on both sides here,  which on the right hand side come from pulling out a factor of  $s_1$, $s_2$ and $s_3$  from the $1^{\rm st}$, $2^{\rm nd}$ and $3^{\rm rd}$ rows and columns of the matrix respectively. 
Expanding the determinant we get: $(1-x^2)(1-y^2)-u^2$, which does indeed equal $v^2$, by the defining relation in $A$.
\end{proof}

\begin{lemma}
The ideal in $A$ generated by $\theta(\{g_1,g_2,g_3\}^{\#\#})$ is the entire ring. \label{whole}
\end{lemma}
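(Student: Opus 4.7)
The plan is to exhibit a unit of $A$ that already lies in $\theta(\{g_1,g_2,g_3\}^{\#\#})$; once we have one such unit, the ideal it generates in $A$ is all of $A$.

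First I would write down explicit generators of $\{g_1,g_2,g_3\}^{\#\#}\lhd \KF$. By Corollary \ref{dotishashhash} this ideal is generated by the scalar dot products $\vec{b}\cdot\vec{l}$ where $\vec{b}$ runs over a generating set for $\Lambda_{F_I}$ as a $\KF$--module and $l$ runs over $\{g_1,g_2,g_3\}$. Lemma \ref{lem21}, applied to the free group $F_I$, tells us that $\{\vec{g_i}\,|\,i\in I\}\cup\{[\vec{g_i},\vec{g_j}]\,|\,i,j\in I\}$ is such a generating set (the element $1\in B$ of Lemma \ref{lem21} contributes nothing to $\Lambda_{F_I}$ by Lemma \ref{lem2}). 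In particular, taking $\vec{b}=\vec{g_i}$ and $\vec{l}=\vec{g_i}$, the elements
$$\vec{g_i}\cdot\vec{g_i}=m_{ii}=1-\lambda_i^2\qquad (i=1,2,3)$$
lie in $\{g_1,g_2,g_3\}^{\#\#}$, where the second equality is relation R2 of \S\ref{desc}.

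Next I would apply $\theta$. By construction $\theta(\lambda_i)=\mu_i$, so
$$\theta(1-\lambda_i^2)=1-\mu_i^2=s_i^2,$$
using the defining identity $s_i^2+\mu_i^2=1$ of Lemma \ref{inverse}. By that same lemma $s_i\in \F$ is nonzero, hence $s_i^2$ is a unit in $\F$, and therefore a unit in $A$. Consequently $\theta(\{g_1,g_2,g_3\}^{\#\#})$ contains a unit of $A$, and the ideal it generates in $A$ is all of $A$, as required.

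There is essentially no obstacle here: the result falls straight out of the formula $\vec{g_i}\cdot\vec{g_i}=1-\lambda_i^2$ together with the fact that $\theta$ was carefully arranged so that this element maps to the unit $s_i^2$. The substantive content of the paper's construction (the fact that $\{g_1,g_2,g_3\}^{\#\#}$ really is the full kernel of the corresponding surjection, and hence that its image under $\theta$ really does capture the obstruction) is what will matter downstream in the Scott--Wiegold reduction; this particular lemma is the easy half, establishing that the ``trivial'' side of the comparison \eqref{neqideals} maps to the unit ideal.
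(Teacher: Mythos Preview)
Your proof is correct and follows essentially the same approach as the paper: exhibit $\vec{g_i}\cdot\vec{g_i}\in\{g_1,g_2,g_3\}^{\#\#}$ via Corollary \ref{dotishashhash}, compute $\theta(\vec{g_i}\cdot\vec{g_i})=1-\mu_i^2=s_i^2$, and invoke Lemma \ref{inverse} to conclude this is a unit. The paper's proof is just a terser version of yours.
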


\begin{proof}
Corollary \ref{dotishashhash} gives $\vecl{g_i}\cdot\vecl{g_i}\,\, \in  \{g_1,g_2,g_3\}^{\#\#}$ for $i=1,2,3$.  But $\theta(\vecl{g_i}\cdot\vecl{g_i})$ $=\theta(1-\barl{g_i}^2)=1-\mu_i^2=s_i^2$, which is invertible in $A$ by Lemma \ref{inverse}.
\end{proof}

\begin{lemma}
The ideal generated by $\theta(\{g_1^r, g_2^s,g_3^t,w\}^{\#\#})$ is generated by $\theta(\{w\}^{\#\#})$. \label{none}
\end{lemma}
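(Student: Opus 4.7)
The plan is to exploit the fact that for any collection of elements $L = L_1 \cup L_2 \subset F_I$, the ideal $L^{\#\#}$ decomposes as the sum $L_1^{\#\#} + L_2^{\#\#}$; this is immediate from Definition \ref{defhashhash} since the generating set for $L^{\#\#}$ is just the union of the generating sets for $L_1^{\#\#}$ and $L_2^{\#\#}$. In particular we have the ideal decomposition
\[
\{g_1^r,g_2^s,g_3^t,w\}^{\#\#}=\{g_1^r\}^{\#\#}+\{g_2^s\}^{\#\#}+\{g_3^t\}^{\#\#}+\{w\}^{\#\#}
\]
inside $\KF$. Applying the ring homomorphism $\theta$ and generating an ideal in $A$ commutes with taking sums, so it suffices to show that $\theta\bigl(\{g_i^{n_i}\}^{\#\#}\bigr)=0$ for the three pairs $(g_i,n_i)=(g_1,r),(g_2,s),(g_3,t)$.

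For each such pair, Corollary \ref{power} gives $\{g_i^{n_i}\}^{\#\#}\subset\langle P_{n_i}(\lambda_i)\rangle\lhd\KF$. Hence
\[
\theta\bigl(\{g_i^{n_i}\}^{\#\#}\bigr)\subset\langle\theta(P_{n_i}(\lambda_i))\rangle=\langle P_{n_i}(\mu_i)\rangle\lhd A.
\]
By the choice of $\mu_i$ in Lemma \ref{inverse}, each $\mu_i$ is a root of the corresponding polynomial $P_{n_i}$, so $P_{n_i}(\mu_i)=0$ in $\F$ and hence in $A$. This forces $\theta\bigl(\{g_i^{n_i}\}^{\#\#}\bigr)=0$ for $i=1,2,3$.

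Combining the two steps, the image $\theta\bigl(\{g_1^r,g_2^s,g_3^t,w\}^{\#\#}\bigr)$ equals $\theta\bigl(\{w\}^{\#\#}\bigr)$ as subsets of $A$, and therefore the ideals they generate in $A$ coincide. I do not expect a serious obstacle here: everything reduces to the observation that the $\mu_i$ were chosen precisely to kill the polynomials $P_{n_i}$, together with the elementary additivity $L^{\#\#}$ exhibits under unions. The only thing to be slightly careful about is the distinction between $\theta$ applied to an ideal (as a set of elements) and the ideal it generates in the target ring $A$; both behave well with respect to sums, so the argument goes through without complication.
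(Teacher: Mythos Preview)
Your proof is correct and follows essentially the same approach as the paper: both use Corollary \ref{power} to bound $\{g_i^{n_i}\}^{\#\#}$ by the principal ideal $\langle P_{n_i}(\bar{g_i})\rangle$, and then observe that $\theta$ sends each $P_{n_i}(\bar{g_i})$ to $P_{n_i}(\mu_i)=0$ by the choice of $\mu_i$ in Lemma \ref{inverse}. You are simply more explicit about the additivity of $L^{\#\#}$ under unions and the distinction between $\theta$ of an ideal versus the ideal it generates, points the paper leaves implicit.
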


\begin{proof}
By Corollary \ref{power} we have that the ideal $\{g_1^r, g_2^s,g_3^t\}^{\#\#}$ is contained in the ideal generated by $P_r(\barl{g_1}),P_s(\barl{g_2}),P_t(\barl{g_3})$.  Applying $\theta$ to these three elements gives: $P_r(\mu_1),P_s(\mu_2),P_t(\mu_3)\in A$, which by construction equal $0$.
\end{proof}

Thus combining  Lemma \ref{whole} and Lemma \ref{none} we deduce:

\begin{lemma} If $\theta(\{w\}^{\#\#})$ generates a proper ideal in $A$ then {\rm (\ref{neqideals})} holds and hence $w$ does not normally generate $G$. 
\label{wproper}
\end{lemma}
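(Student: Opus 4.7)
My plan is a direct contrapositive argument that chains Lemmas \ref{whole} and \ref{none} through the ring homomorphism $\theta$. I suppose for contradiction that the two ideals in \eqref{neqideals} coincide, i.e.\ that $\{g_1^r, g_2^s, g_3^t, w\}^{\#\#} = \{g_1, g_2, g_3\}^{\#\#}$ as ideals in $\KF$. Applying any ring homomorphism to two equal subsets yields equal images, and hence the ideals they generate in the codomain are equal. So the ideal of $A$ generated by $\theta(\{g_1^r, g_2^s, g_3^t, w\}^{\#\#})$ equals the ideal of $A$ generated by $\theta(\{g_1, g_2, g_3\}^{\#\#})$.

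Now I invoke the two preceding lemmas. By Lemma \ref{whole}, the ideal in $A$ generated by $\theta(\{g_1, g_2, g_3\}^{\#\#})$ is all of $A$. By Lemma \ref{none}, the ideal in $A$ generated by $\theta(\{g_1^r, g_2^s, g_3^t, w\}^{\#\#})$ coincides with the ideal generated by $\theta(\{w\}^{\#\#})$. Chaining these three equalities, $\theta(\{w\}^{\#\#})$ would generate all of $A$, contradicting the hypothesis that it generates a proper ideal. This rules out the assumption and proves \eqref{neqideals}.

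For the second clause, I invoke Theorem \ref{hashhash} exactly as in the discussion preceding \eqref{neqideals}: if $w$ normally generated $G$, then lifting to $F_I$ the set $\{g_1^r, g_2^s, g_3^t, w\}$ would normally generate $F_I$ itself, which is also normally generated by $\{g_1, g_2, g_3\}$; Theorem \ref{hashhash} would then force the two $\#\#$-ideals to be equal, contradicting \eqref{neqideals}. Since this lemma is a purely formal consequence of Lemmas \ref{whole} and \ref{none} together with Theorem \ref{hashhash}, there is no real obstacle within the proof itself; the substantive content was absorbed into the preceding two lemmas, and the remaining difficulty (conjecturally, that $\theta(\{w\}^{\#\#})$ really is proper for every nontrivial $w$) is the genuine commutative-algebra question that the Scott-Wiegold conjecture reduces to.
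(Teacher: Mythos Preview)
Your proof is correct and takes essentially the same approach as the paper, which simply states that the lemma follows by ``combining Lemma \ref{whole} and Lemma \ref{none}'' without further detail. You have accurately spelled out the contrapositive chain through $\theta$ and correctly invoked Theorem \ref{hashhash} for the final clause, exactly as the surrounding discussion in the paper intends.
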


Let $w_1,w_2,w_2',w_3,w_3'$ demote the following elements of $A$:
\begin{displaymath}
\begin{array}{ccc}
w_1=\theta(\vecl{g_1}\cdot\,\vec{w}),
& w_2=s_2^{-1}\theta([\vecl{g_1},\vecl{g_2}]\cdot\vec{w}), 
&w_2'=s_1^{-1}s_2^{-1}\theta([\vecl{g_1},\vecl{g_2}]\cdot[\vecl{g_1},\vec{w}]),\\
&w_3=s_3^{-1}\theta([\vecl{g_1},\vecl{g_3}]\cdot\vec{w}),
&w_3'=s_1^{-1}s_3^{-1}\theta([\vecl{g_1},\vecl{g_3}]\cdot[\vecl{g_1},\vec{w}]).
\end{array}
\end{displaymath}

\begin{lemma}\label{whashhash:genset}
The ideal generated by $\theta(\{w\}^{\#\#})$ has generating set:
$\{w_1,w_2,w_2',w_3,w_3'\}$.
\end{lemma}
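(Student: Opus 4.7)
The plan is to start from the canonical generating set for $\{w\}^{\#\#}\lhd\KF$ supplied by Corollary \ref{dotishashhash} and Lemma \ref{lem21}: taking
\[
B=\{\vec{g_1},\vec{g_2},\vec{g_3},[\vec{g_1},\vec{g_2}],[\vec{g_1},\vec{g_3}],[\vec{g_2},\vec{g_3}]\},
\]
the six elements $\vec{b}\cdot\vec{w}$ (for $\vec{b}\in B$) generate $\{w\}^{\#\#}$, so their images under $\theta$ generate the ideal in question. Three of these reduce by definition to unit multiples of $w_1,w_2,w_3$; the task is therefore to dispose of the other three generators, $\theta(\vec{g_2}\cdot\vec{w})$, $\theta(\vec{g_3}\cdot\vec{w})$, $\theta([\vec{g_2},\vec{g_3}]\cdot\vec{w})$, and conversely to verify that $w_2'$ and $w_3'$ lie in the ideal generated by $\theta(\{w\}^{\#\#})$.

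For the reverse inclusion, apply Corollary \ref{cor3}ii to obtain, for $j=2,3$,
\[
[\vec{g_1},\vec{g_j}]\cdot[\vec{g_1},\vec{w}] \;=\; m_{11}(\vec{g_j}\cdot\vec{w})\;-\;m_{1j}(\vec{g_1}\cdot\vec{w}),
\]
a $\KF$-linear combination of generators of $\{w\}^{\#\#}$; applying $\theta$ and the defining formulae for $w_2',w_3'$ then places them in the ideal generated by $\theta(\{w\}^{\#\#})$.

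For the forward inclusion, I first solve the same identity for $\theta(\vec{g_j}\cdot\vec{w})$: because $\theta(m_{11}) = 1-\mu_1^2 = s_1^2$ is invertible in $A$ by Lemma \ref{inverse}, this writes $\theta(\vec{g_j}\cdot\vec{w})$ as an explicit $A$-linear combination of $w_1$ and $w_j'$ for $j=2,3$. For the remaining generator, I apply the four-term identity Lemma \ref{lem19}i with $(\vec{x},\vec{y},\vec{z},\vec{w},\vec{u})=(\vec{g_1},\vec{g_2},\vec{g_3},\vec{w},\vec{g_1})$. Substituting the images $\theta(m_{11})=s_1^2$, $\theta(m_{12})=s_1s_2x$, $\theta(m_{31})=s_1s_3y$, $\theta(w_{123})=s_1s_2s_3v$, and dividing through by the unit $s_1^2$, realises $\theta([\vec{g_2},\vec{g_3}]\cdot\vec{w})$ as an explicit $A$-linear combination of $w_1,w_2,w_3$.

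The only real obstacle here is bookkeeping: identifying the right identity from Section \ref{iden} at each step, and keeping careful track of the factors $s_1,s_2,s_3$; their invertibility (Lemma \ref{inverse}) is exactly what makes each reduction go through, so no further commutative-algebra input is required.
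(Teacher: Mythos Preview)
Your proposal is correct and follows essentially the same route as the paper: start from the six canonical generators $\theta(\vec{b}\cdot\vec{w})$, identify three as unit multiples of $w_1,w_2,w_3$, use Corollary~\ref{cor3}ii to place $w_2',w_3'$ in the ideal and then invert $\theta(m_{11})=s_1^2$ to recover $\theta(\vec{g_2}\cdot\vec{w}),\theta(\vec{g_3}\cdot\vec{w})$. The only cosmetic difference is in handling $\theta([\vec{g_2},\vec{g_3}]\cdot\vec{w})$: you invoke Lemma~\ref{lem19}i directly, whereas the paper writes down Corollary~\ref{cor3}iv and dots with $\vec{w}$ --- but these produce the identical linear relation, so the arguments coincide.
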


\begin{proof}
From Lemma \ref{lem21} we obtain a generating set for $\Lambda_{F_I}$.   Then applying Corollary \ref{dotishashhash} we obtain a generating set for $\{w\}^{\#\#}$.  Applying $\theta$ yields a generating set for the ideal generated by $\theta(\{w\}^{\#\#})$:

\begin{displaymath}
\begin{array}{ccc}
v_1=\theta(\vecl{g_1}\cdot\,\vec{w}),
& v_2=\theta(\vecl{g_2}\cdot\,\vec{w}), 
&v_3=\theta(\vecl{g_3}\cdot\,\vec{w}),\\
v_{12}=\theta([\vecl{g_1},\vecl{g_2}]\cdot\vec{w}),
&v_{23}=\theta([\vecl{g_2},\vecl{g_3}]\cdot\vec{w}),
&v_{31}=\theta([\vecl{g_3},\vecl{g_1}]\cdot\vec{w})).
\end{array}
\end{displaymath}
We must first verify that the $w_i,w_i'$ all lie in the ideal generated by these elements.  Clearly, $w_1=v_1,w_2=s_2^{-1}v_{12},w_3=-s_3^{-1}v_{31}$.  By Corollary \ref{cor3}ii:
\begin{eqnarray*}{}[\vecl{g_1},\vecl{g_2}] \cdot [\vecl{g_1},\vec{w}] = 
(\vecl{g_1}\cdot \vecl{g_1})(\vecl{g_2}\cdot \vec{w})-(\vecl{g_1}\cdot \vec{w})(\vecl{g_2}\cdot \vecl{g_1}),\\
{}[\vecl{g_1},\vecl{g_3}]
\cdot [\vecl{g_1},\vec{w}]  = 
(\vecl{g_1}\cdot \vecl{g_1})(\vecl{g_3}\cdot \vec{w})-(\vecl{g_1}\cdot \vec{w})(\vecl{g_3}\cdot \vecl{g_1}).\,
\end{eqnarray*}
Applying $\theta$ and dividing through by $s_1s_2$, $s_1s_3$ respectively we get: \begin{eqnarray} w_2'=s_1s_2^{-1}v_2-xv_{1}, \qquad w_3'=s_1s_3^{-1}v_3-yv_{1}. \label{winv} \end{eqnarray}

Now we must show that the $w_i,w_i'$ generate the $v_i,v_{ij}$.  Again we have, $v_1=w_1,v_{12}=s_2w_2,v_{31}=-s_3w_3$.  Rearranging (\ref{winv}) we get:
$$ v_2=s_1^{-1}s_2w_2'+s_1^{-1}s_2xw_{1}, \qquad v_3=s_1^{-1}s_3w_3'+s_1^{-1}s_3yw_{1}.$$

It remains to show that $v_{23}$ lies in the span of the $w_i,w_i'$.  By Corollary \ref{cor3}iv:
$$([\vecl{g_1},\vecl{g_2}]\cdot\vecl{g_3})\vecl{g_1}=(\vecl{g_1}\cdot\vecl{g_1})[\vecl{g_2},\vecl{g_3}] -(\vecl{g_2}\cdot\vecl{g_1})[\vecl{g_1},\vecl{g_3}]
+(\vecl{g_3}\cdot\vecl{g_1})[\vecl{g_1},\vecl{g_2}].$$
Taking the $\cdot$ product with $\vec{w}$ and applying $\theta$ we get: $$s_1s_2s_3vw_1=s_1^2v_{23}-s_1s_2s_3xw_{3}+s_1s_2s_3yw_{2}.$$
Rearranging gives $v_{23}=s_1^{-1}s_2s_3vw_1+s_1^{-1}s_2s_3xw_{3}-s_1^{-1}s_2s_3yw_{2}$, as required.
\end{proof}

Thus by Lemma \ref{wproper}, in order to show that $w$ does not normally generate $G$, it suffices to show that $1\in A$ cannot be expressed as a linear combination of the $w_i,w_i'$.  

\begin{lemma} \label{matrix:relation}
We have the following relation between $w_2,w_2',w_3,w_3'${\rm :}
\begin{eqnarray}
\left(
\begin{array}{ccccc}
-u&v&1-x^2&0\\
-v&-u&0&1-x^2\\
1-y^2&0&-u&-v\\
0&1-y^2&v&-u\\
\end{array}
\right)
\left(
\begin{array}{c}
w_2\\w_2'\\w_3\\w_3'
\end{array}
\right)
=
\left(
\begin{array}{c}
0\\0\\0\\0
\end{array}
\right). \label{matrixequation}
\end{eqnarray}
\end{lemma}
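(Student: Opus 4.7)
The plan is to derive all four rows of (\ref{matrixequation}) from a single parametric identity in $\KF$, then apply $\theta$ and clear $s_i$ factors. For brevity write $a_i := \vecl{g_i}$ for $i = 1, 2, 3$ and $p := \vec{w}$, so that each $w_l$ (respectively $w_l'$) is, up to an invertible $s_i$ factor, the image under $\theta$ of $[a_1, a_l] \cdot p$ (respectively $[a_1, a_l] \cdot [a_1, p]$).

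I would first apply Corollary \ref{cor3}iv with inputs $(a_1, a_j, q, [a_1, a_k])$, where $\{j, k\} = \{2, 3\}$ and $q \in \Lambda_{F_I}$ is arbitrary. Since $[a_1, a_k] \cdot a_1 = 0$ by the alternating property (Lemma \ref{lem13}iv) and $a_j \cdot [a_1, a_k] = \pm w_{123}$ by the cyclic identity of Corollary \ref{cor3}i together with skew-symmetry of the bracket, Corollary \ref{cor3}iv collapses to
\begin{equation}
([a_1, a_j] \cdot q)\,[a_1, a_k] \;=\; \pm\, w_{123}\,[a_1, q] \;+\; ([a_1, a_k] \cdot q)\,[a_1, a_j] \label{key:id}
\end{equation}
in $\Lambda_{F_I}$. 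Taking the dot product of (\ref{key:id}) with $[a_1, a_k]$ gives a scalar identity in $\KF$ which is the workhorse of the proof.

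Setting $q = p$ in that scalar identity and applying $\theta$ will yield rows~1 and~3 of (\ref{matrixequation}), after dividing through by a common power of $s_1 s_2 s_3$. The relevant $\theta$-values come from Corollary \ref{cor3}ii and the definition of $\theta$: namely $\theta([a_1, a_2] \cdot [a_1, a_2]) = s_1^2 s_2^2(1 - x^2)$, $\theta([a_1, a_3] \cdot [a_1, a_3]) = s_1^2 s_3^2(1 - y^2)$, $\theta([a_1, a_2] \cdot [a_1, a_3]) = s_1^2 s_2 s_3 u$, together with $\theta(w_{123}) = s_1 s_2 s_3 v$ and $\theta([a_1, a_l] \cdot p) = s_l w_l$.

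For rows~2 and~4 I would instead set $q = [a_1, p]$. The only new ingredient is to expand the iterated bracket $[a_1, [a_1, p]]$ via Lemma \ref{lem13}v as $-(a_1 \cdot a_1)\,p + (a_1 \cdot p)\,a_1$; after dotting with $[a_1, a_k]$, the second summand vanishes by the alternating property, leaving $-(a_1 \cdot a_1)\,([a_1, a_k] \cdot p)$. Using $\theta(a_1 \cdot a_1) = s_1^2$ and $\theta([a_1, a_l] \cdot [a_1, p]) = s_1 s_l w_l'$, the same clearing of $s_i$ factors produces the remaining two rows. The main obstacle is simply the bookkeeping of signs and of $s_i$ powers; no identity beyond Corollary \ref{cor3} and Lemma \ref{lem13}v is required.
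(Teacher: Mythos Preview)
Your proposal is correct and yields all four rows with the claimed sign conventions; I have checked the $(j,k)=(2,3)$ and $(j,k)=(3,2)$ specializations of your scalar identity, for both $q=p$ and $q=[a_1,p]$, and they produce rows 3, 1, 4, 2 respectively after dividing by the obvious $s_i$ factors.

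Your route is, however, organized differently from the paper's. The paper does not start from Corollary~\ref{cor3}iv at all: instead, for rows 1 and 3 it writes the single scalar $[[\vecl{g_1},\vecl{g_2}],\vecl{g_3}]\cdot[[\vecl{g_1},\vecl{g_2}],[\vecl{g_1},\vec{w}]]$ and expands it in two independent ways (once via Corollary~\ref{cor3}ii treating it as a dot product of two brackets, once via Corollary~\ref{cor3}iii to collapse the inner double bracket), then equates the two expansions; rows 2 and 4 come from the same trick applied to $[[\vecl{g_1},\vecl{g_2}],[\vecl{g_1},\vecl{g_3}]]\cdot[[\vecl{g_1},\vecl{g_2}],[\vecl{g_1},\vec{w}]]$. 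Your approach has the advantage of being parametric: one vector identity (Corollary~\ref{cor3}iv with a vanishing first term) is dotted with $[a_1,a_k]$, and the four rows fall out by choosing $(j,k)$ and $q\in\{p,[a_1,p]\}$. The paper's approach, by contrast, requires spotting two separate ``magic'' scalar expressions to double-expand, though once found the expansions are slightly more direct since they work entirely in $\KF$ rather than passing through a $\Lambda_{F_I}$-valued intermediate. Neither argument uses anything beyond Corollary~\ref{cor3} and Lemma~\ref{lem13}v, so the difference is one of packaging rather than depth.
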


\begin{proof}
Consider the expression:$$[[\vecl{g_1},\vecl{g_2}],\vecl{g_3}]\cdot[[\vecl{g_1},\vecl{g_2}],[\vecl{g_1},\vec{w}]].$$
Expanding this as a $\cdot$ product of two brackets using Corollary \ref{cor3}ii we get:
\begin{eqnarray*}&\,\,&
[[\vecl{g_1},\vecl{g_2}],\vecl{g_3}]\cdot[[\vecl{g_1},\vecl{g_2}],[\vecl{g_1},\vec{w}]]\\&=&
([\vecl{g_1},\vecl{g_2}]\cdot[\vecl{g_1},\vecl{g_2}]) (\vecl{g_3}\cdot[\vecl{g_1},\vec{w}])-
([\vecl{g_1},\vecl{g_2}]\cdot[\vecl{g_1},\vec{w}]) (\vecl{g_3}\cdot[\vecl{g_1},\vecl{g_2}])\\
&=&-((\vecl{g_1}\cdot\vecl{g_1})(\vecl{g_2}\cdot\vecl{g_2})-(\vecl{g_1}\cdot\vecl{g_2})^2)
 ([\vecl{g_1},\vecl{g_3}]\cdot\vec{w})-
 ([\vecl{g_1},\vecl{g_2}]\cdot\vecl{g_3})
([\vecl{g_1},\vecl{g_2}]\cdot[\vecl{g_1},\vec{w}]).
\end{eqnarray*}

On the other hand we may replace $[[\vecl{g_1},\vecl{g_2}],[\vecl{g_1},\vec{w}]]$ with $([\vecl{g_1},\vecl{g_2}]\cdot\vec{w})\vecl{g_1}$ by Corollary \ref{cor3}iii.  Thus:
\begin{eqnarray*}&\,\,&
[[\vecl{g_1},\vecl{g_2}],\vecl{g_3}]\cdot[[\vecl{g_1},\vecl{g_2}],[\vecl{g_1},\vec{w}]]\\&=&
([[\vecl{g_1},\vecl{g_2}],\vecl{g_3}]\cdot\vecl{g_1})
([\vecl{g_1},\vecl{g_2}]\cdot\vec{w})\\\qquad&=&
([\vecl{g_3},\vecl{g_1}]\cdot[\vecl{g_1},\vecl{g_2}])
([\vecl{g_1},\vecl{g_2}]\cdot\vec{w})\qquad\quad{\rm (cycling\,\,} 1^{\rm st}{\rm \,\,scalar\,\, triple\,\, product)}\\&=&
((\vecl{g_1}\cdot\vecl{g_3})(\vecl{g_1}\cdot\vecl{g_2})-(\vecl{g_1}\cdot\vecl{g_1})(\vecl{g_2}\cdot\vecl{g_3}))
([\vecl{g_1},\vecl{g_2}]\cdot\vec{w}).
\end{eqnarray*}

Equating these two expansions of $[[\vecl{g_1},\vecl{g_2}],\vecl{g_3}]\cdot[[\vecl{g_1},\vecl{g_2}],[\vecl{g_1},\vec{w}]]$, applying $\theta$ to both sides, and dividing through by $s_1^2s_2^2s_3$, yields the top row of (\ref{matrixequation}).

If instead we equate the two expansions of $[[\vecl{g_1},\vecl{g_2}],\vecl{g_3}]\cdot[[\vecl{g_1},\vecl{g_2}],[\vecl{g_1},\vec{w}]]$, but interchange all $\vecl{g_2}$'s and $\vecl{g_3}$'s, before applying $\theta$ to both sides, and dividing through by $s_1^2s_2s_3^2$, we obtain the third row of (\ref{matrixequation}).

Now consider the expression:$$
[[\vecl{g_1},\vecl{g_2}],[\vecl{g_1},\vecl{g_3}]]\cdot [[\vecl{g_1},\vecl{g_2}],[\vecl{g_1},\vec{w}]].
$$
By Corollary \ref{cor3}iii we substitute $([\vecl{g_1},\vecl{g_2}]\cdot\vecl{g_3})\vecl{g_1}$ for $[[\vecl{g_1},\vecl{g_2}],[\vecl{g_1},\vecl{g_3}]]$ and $([\vecl{g_1},\vecl{g_2}]\cdot\vec{w})\vecl{g_1}$ for  $[[\vecl{g_1},\vecl{g_2}],[\vecl{g_1},\vec{w}]]$:
\begin{eqnarray*}
&\,\,&[[\vecl{g_1},\vecl{g_2}],[\vecl{g_1},\vecl{g_3}]]\cdot [[\vecl{g_1},\vecl{g_2}],[\vecl{g_1},\vec{w}]]\\&=&
(\vecl{g_1}\cdot\vecl{g_1})([\vecl{g_1},\vecl{g_2}]\cdot\vecl{g_3})([\vecl{g_1},\vecl{g_2}]\cdot\vec{w}).
\end{eqnarray*}

On the other hand we could apply Corollary \ref{cor3}ii to expand the expression as a $\cdot$ product of two brackets:
\begin{eqnarray*}
&\,\,&[[\vecl{g_1},\vecl{g_2}],[\vecl{g_1},\vecl{g_3}]]\cdot [[\vecl{g_1},\vecl{g_2}],[\vecl{g_1},\vec{w}]]\\&=&
([\vecl{g_1},\vecl{g_2}]\cdot[\vecl{g_1},\vecl{g_2}])([\vecl{g_1},\vecl{g_3}]\cdot[\vecl{g_1},\vec{w}])
-([\vecl{g_1},\vecl{g_2}]\cdot[\vecl{g_1},\vecl{g_3}])([\vecl{g_1},\vecl{g_2}]\cdot[\vecl{g_1},\vec{w}])\\&=&
((\vecl{g_1}\cdot\vecl{g_1})(\vecl{g_2}\cdot\vecl{g_2})-(\vecl{g_1}\cdot\vecl{g_2})^2)([\vecl{g_1},\vecl{g_3}]\cdot[\vecl{g_1},\vec{w}])\\&&
-((\vecl{g_1}\cdot\vecl{g_1})(\vecl{g_2}\cdot\vecl{g_3})
-(\vecl{g_1}\cdot\vecl{g_3})(\vecl{g_1}\cdot\vecl{g_2}))
([\vecl{g_1},\vecl{g_2}]\cdot[\vecl{g_1},\vec{w}]).
\end{eqnarray*}

Equating these two expansions of $[[\vecl{g_1},\vecl{g_2}],[\vecl{g_1},\vecl{g_3}]]\cdot [[\vecl{g_1},\vecl{g_2}],[\vecl{g_1},\vec{w}]]$, applying $\theta$ to both sides, and dividing through by $s_1^3s_2^2s_3$, yields the second row of (\ref{matrixequation}).

If instead we equate the two expansions of $[[\vecl{g_1},\vecl{g_2}],[\vecl{g_1},\vecl{g_3}]]\cdot [[\vecl{g_1},\vecl{g_2}],[\vecl{g_1},\vec{w}]]$, but interchange all $\vecl{g_2}$'s and $\vecl{g_3}$'s, before applying $\theta$ to both sides, and dividing through by $s_1^3s_2s_3^2$, we obtain the fourth row of (\ref{matrixequation}). 
\end{proof}

We will give a generating set for the kernel of the matrix in Lemma \ref{matrix:relation}.  First we need to better understand the ring $A$.

\begin{lemma}\label{quad}
We may regard $A$ as a quadratic extension of the polynomial ring $\F[x,y,u]${\rm ;} so every element of $A$ may written uniquely in the form $a+a'v$ with $a,a'\in\F[x,y,u]$.
\end{lemma}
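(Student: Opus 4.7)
The plan is to observe that $A$ is defined as a quotient of a polynomial ring by a single relation that is monic and quadratic in the variable $v$, from which the result follows by standard polynomial division.

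First I would rewrite $\F[x,y,u,v]$ as $\F[x,y,u][v]$, a polynomial ring in one variable over the integral domain $\F[x,y,u]$. The defining relation of $A$ can then be rearranged as
\[
v^2 - \bigl((1-x^2)(1-y^2) - u^2\bigr) = 0,
\]
which is a monic polynomial of degree $2$ in $v$ with coefficients in $\F[x,y,u]$.

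For existence of the representation, I would apply Euclidean division in $\F[x,y,u][v]$: any polynomial $F(x,y,u,v)$ can be written as
\[
F = Q(x,y,u,v)\bigl(v^2 - (1-x^2)(1-y^2) + u^2\bigr) + a(x,y,u) + a'(x,y,u)\, v,
\]
so passing to $A$ gives $F = a + a'v$. For uniqueness, I would observe that if $a + a'v$ represents $0$ in $A$, then $a + a'v$ lies in the principal ideal generated by the (monic, degree-$2$ in $v$) defining polynomial; but any nonzero multiple of that polynomial has degree at least $2$ in $v$, whereas $a + a'v$ has degree at most $1$ in $v$, forcing $a = a' = 0$.

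There is no real obstacle here; the entire argument is the standard fact that quotienting a polynomial ring $R[v]$ by a monic polynomial of degree $n$ produces a free $R$-module of rank $n$ with basis $\{1, v, \dots, v^{n-1}\}$. Applied with $R = \F[x,y,u]$ and $n = 2$, this gives exactly the claimed unique expression $a + a'v$.
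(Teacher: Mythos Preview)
Your proposal is correct and follows essentially the same route as the paper: both reduce existence to polynomial division by a relation that is monic quadratic in $v$, and both obtain uniqueness from the observation that a nonzero multiple of the degree-$2$ defining polynomial cannot have $v$-degree at most $1$. Your version is phrased slightly more cleanly via the standard free-module fact for $R[v]/(f)$ with $f$ monic, but the underlying argument is the same.
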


\begin{proof}
Given any element of $A$ we may regard it as a polynomial in $v$, with coefficients in $\F[x,y,u]$.  If an element of $A$ has degree greater than $1$ as a polynomial in $v$, then the defining relation in $A$: $(1-x^2)(1-y^2)=u^2+v^2$, allows us to represent the element  as a polynomial in $v$ of lesser degree.  Any element of $A$ may be represented as a polynomial in $v$ of minimal degree, which will then have the required form.

To see that this is unique, suppose we have two degree at most $1$ polynomials in $v$ representing the same element of $A$.  Then the difference must be at most degree $1$ as a polynomial in $v$.  However it must also be divisible  by the degree $2$  polynomial $(1-x^2)(1-y^2)-u^2-v^2$.  Thus the difference is $0$.
\end{proof}

Let $M$ denote the matrix:\begin{eqnarray}M=\left(
\begin{array}{ccccc}
u&v&1-x^2&0\\
-v&u&0&1-x^2\\
1-y^2&0&u&-v\\
0&1-y^2&v&u\\
\end{array}
\right)\label{M}
\end{eqnarray}

\bigskip
\begin{lemma}
The columns of the matrix $M$ generate the kernel of the matrix in {\rm (\ref{matrixequation})}.  \label{kernelisimage}
\end{lemma}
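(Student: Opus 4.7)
The plan rests on two observations. Let $N$ denote the matrix in (\ref{matrixequation}); comparing entries with (\ref{M}) shows $M = N + 2uI$. A direct matrix computation using the defining relation $(1-x^2)(1-y^2) = u^2 + v^2$ of $A$ verifies $NM = 0$, so $\text{image}(M) \subseteq \ker(N)$. The remaining task is the reverse inclusion.

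For this, I would exploit the quadratic extension structure of Lemma \ref{quad}. Writing $A = R \oplus Rv$ with $R = \F[x,y,u]$ and $v^2 = P$ for $P := (1-x^2)(1-y^2) - u^2$, I decompose $N = N_0 + vJ$ and $M = M_0 + vJ$ into their $v$-free and $v$-linear parts, giving matrices $N_0, M_0, J \in R^{4\times 4}$ with $M_0 = N_0 + 2uI$. A direct computation yields $J^2 = -I$, and unpacking $NM = 0$ (using $v^2 = P$) separates into two $R$-matrix identities: $N_0 M_0 = P\cdot I$ and $M_0 J = -JN_0$.

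Given $\xi \in \ker(N)$ written as $\xi = \xi_0 + v\xi_1$ with $\xi_0, \xi_1 \in R^4$, the equation $N\xi = 0$ splits, again via $v^2 = P$, into
$$N_0 \xi_0 + PJ\xi_1 = 0, \qquad J\xi_0 + N_0\xi_1 = 0.$$
Since $J^{-1} = -J$, the second equation forces $\xi_0 = JN_0\xi_1$. The natural candidate preimage is $\eta := -J\xi_1 \in R^4 \subseteq A^4$; applying $M$ and using $M_0 J = -JN_0$ together with $J^2 = -I$,
$$M\eta = -(M_0 + vJ)J\xi_1 = -M_0 J \xi_1 - v J^2 \xi_1 = JN_0\xi_1 + v\xi_1 = \xi_0 + v\xi_1 = \xi,$$
exhibiting $\xi$ as an $A$-linear combination of the columns of $M$.

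The conceptual core of the proof lies in noticing the ``complex structure'' $J^2 = -I$ together with the intertwining relation $M_0 J = -JN_0$, both of which fall out of $NM=0$ once one decomposes along the basis $\{1,v\}$. Once these are identified, the preimage $\eta = -J\xi_1$ is essentially forced as the unique natural $R$-valued candidate, and the remaining verification is a short calculation. The main obstacle is recognising this structure in the first place; without it, one is left trying to invert $M$ directly over $A$, which is obstructed because $M_0$ is invertible only after inverting $P$.
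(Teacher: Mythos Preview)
Your proof is correct and genuinely different from the paper's. The paper proceeds by elimination: given $(z_1,z_2,z_3,z_4)^T$ in the kernel, it reduces the third row modulo $\langle u,v,1-x^2\rangle$ to force $z_1$ into that ideal, subtracts a combination of the first three columns of $M$ to kill $z_1$, then uses the UFD property of $\F[x,y,u]$ on the relation $uz_2=(1-x^2)z_4$ to kill $z_2$ with the fourth column, and finally reads off $z_3=z_4=0$ from the remaining rows. Your argument instead exploits the hidden complex structure: writing $N=N_0+vJ$, $M=M_0+vJ$ with $J^2=-I$ and $M_0J=-JN_0$, you produce an explicit preimage $\eta=-J\xi_1$ for any kernel element $\xi=\xi_0+v\xi_1$. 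One small remark: unpacking $NM=0$ literally yields $N_0J+JM_0=0$, i.e.\ $JM_0=-N_0J$; the form $M_0J=-JN_0$ that you use then follows by combining this with $M_0=N_0+2uI$, which you have already noted. The paper's approach is more hands-on and requires no structural insight, while yours is cleaner, explains \emph{why} the kernel equals the image (the pair $(N,M)$ behaves like multiplication by $-u+iv$ and $u+iv$ on a rank-two ``complex'' module), and gives a closed formula for the preimage rather than an iterative reduction.
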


\begin{proof}
Direct calculation shows that the columns of $M$ lie in the kernel.  

Now suppose that:\begin{eqnarray} \left(
\begin{array}{ccccc}
-u&v&1-x^2&0\\
-v&-u&0&1-x^2\\
1-y^2&0&-u&-v\\
0&1-y^2&v&-u\\
\end{array}
\right)
\left(
\begin{array}{c}
z_1\\z_2\\z_3\\z_4\end{array}
\right)=
\left(
\begin{array}{c}
0\\0\\0\\0\end{array}
\right) \label{matrixeq} 
\end{eqnarray}

Reducing the third row of (\ref{matrixeq}) modulo the ideal $I=\langle u,v,1-x^2\rangle$ we get $(1-y^2)z_1=0$.  However $1-y^2$ is not a zero divisor in the ring $A/I=\F[x,y]/\langle1-x^2\rangle$.  Thus $z_1 \in I$, and subtracting some $A$-linear combination of the first three columns of $M$ from $(z_1,z_2,z_3,z_4)^T$, we may assume that $z_1=0$.

Then the second row of (\ref{matrixeq}) gives $uz_2=(1-x^2)z_4$.  Representing $z_2,z_4$ as in Lemma \ref{quad} and noting that the polynomial ring $\F[x,y,u]$ is a unique factorisation domain, we have that $1-x^2$ divides $z_2$.  Thus subtracting some multiple of the fourth column of $M$ from $(z_1,z_2,z_3,z_4)^T$, we may assume that $z_1=z_2=0$.

Then the first and second row of (\ref{matrixeq}) give $(1-x^2)z_3=0$ and $(1-x^2)z_4=0$.  Representing $z_3,z_4$ as in Lemma \ref{quad} and noting that $1-x^2$ is not a zero divisor in the polynomial ring $\F[x,y,u]$, we conclude that $z_3=z_4=0$.

Thus starting with an arbitrary $(z_1,z_2,z_3,z_4)^T$ satisfying (\ref{matrixeq}) and subtracting a linear combination of the columns of $M$, we are left with $(0,0,0,0)^T$.
\end{proof}

\begin{definition}
We say that an element of $A$ is \emph{represented} by $M$ if it lies in the image of the bilinear map represented by $M$.  That is $a \in A$ is represented by $M$ precisely when there exist ${\bf q_1},{\bf q_2}\in A^4$ such that  $a={\bf q_1}^T M {\bf q_2}$.
\end{definition}

\begin{lemma}\label{w2w3constraint}
Any linear combination of $w_2,w_2',w_3,w_3'$ is represented by $M$.
\end{lemma}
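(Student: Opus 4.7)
The plan is to combine Lemmas \ref{matrix:relation} and \ref{kernelisimage} essentially directly. Lemma \ref{matrix:relation} tells us that the column vector $(w_2, w_2', w_3, w_3')^T$ lies in the kernel of the matrix appearing in (\ref{matrixequation}). Lemma \ref{kernelisimage} asserts that this same kernel is generated, as an $A$-module, by the columns of $M$.

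Combining these, I would first conclude that there exists some ${\bf c} \in A^4$ such that
$$(w_2, w_2', w_3, w_3')^T = M {\bf c}.$$
Given any linear combination $\alpha w_2+\beta w_2'+\gamma w_3+\delta w_3'$ with $\alpha,\beta,\gamma,\delta \in A$, I would set ${\bf q_1} = (\alpha,\beta,\gamma,\delta)^T$ and ${\bf q_2}={\bf c}$. Then
$$\alpha w_2+\beta w_2'+\gamma w_3+\delta w_3' = {\bf q_1}^T (w_2, w_2', w_3, w_3')^T = {\bf q_1}^T M {\bf q_2},$$
which is precisely what it means for the element to be represented by $M$.

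There is essentially no obstacle here: the statement is a formal consequence of the two preceding lemmas, and the only thing to check is the bookkeeping that passes from ``lies in the kernel = lies in the column span'' to ``any $A$-linear combination is represented by the bilinear form $M$.'' The substantive content has already been absorbed into Lemmas \ref{matrix:relation} and \ref{kernelisimage}.
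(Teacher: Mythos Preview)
Your proposal is correct and matches the paper's proof essentially verbatim: the paper likewise invokes Lemma~\ref{matrix:relation} to place $(w_2,w_2',w_3,w_3')^T$ in the kernel, then Lemma~\ref{kernelisimage} to write it as $M{\bf q_2}$, and concludes that any linear combination equals ${\bf q_1}^T M {\bf q_2}$.
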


\begin{proof}
By Lemma \ref{matrix:relation} we have that $(w_2,w_2',w_3,w_3')^T$ satisfies (\ref{matrixeq}).  Thus by Lemma \ref{kernelisimage} we have $(w_2,w_2',w_3,w_3')^T=M{\bf q_2}$ for some ${\bf q_2}\in A^4$.  So if $a \in A$ is a linear combination of $w_2,w_2',w_3,w_3'$, then $a={\bf q_1}^T M {\bf q_2}$ for some ${\bf q_1}\in A^4$.
\end{proof}

Having thus constrained linear combinations of $w_2,w_2',w_3,w_3'$, we turn our attention to the element $w_1 =\theta(\vecl{g_1}\cdot\,\vec{w})\in A$.  Our approach is similar to that in \S\ref{cyc}, in that we first consider the abelianisation homomorphism:$$h\colon F_I\to \Z\times\Z\times  \Z.$$

\begin{lemma}\label{lem:diff}
We have $(\vecl{g_1}\cdot\,\vec{w})-(\vecl{g_1}\cdot\,\vecl{g_1g_2g_3})\in{\rm ker}(h^\#)$.
\end{lemma}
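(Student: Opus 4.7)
The plan is a direct functoriality argument. Set $H=\Z\times\Z\times\Z$ and consider the induced ring homomorphism $\hat h\colon A_{F_I}\to A_H$, whose restriction to $\KF$ is $h^\#$. The essential preparatory observation is that $\hat h$ commutes with the involution $*$: this is built into the proof of Lemma \ref{lem4}, where one uses $f(x^*)=f(x)^*$ for any group homomorphism $f$. Consequently $\hat h$ preserves the $\kappa$--linear decomposition $z=\bar z+\vec z$ of Lemma \ref{lem2}, taking $\KF$ into $\kappa[H]^\#$ and $\Lambda_{F_I}$ into $\Lambda_H$, and in particular commutes with the projection $z\mapsto\vec z$. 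Since the dot product on anti-invariants is defined purely from ring multiplication and $*$ (Definition \ref{dot}), this yields
$$\hat h(\vec x\cdot\vec y)=\hat h(\vec x)\cdot\hat h(\vec y)\qquad\text{for all }x,y\in\kF.$$

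Next I would invoke the choice of word representing $w$. By construction, $w$ has total exponent $1$ in each generator $g_1,g_2,g_3$, so $h(w)=(1,1,1)=h(g_1g_2g_3)$ in $H$. Hence the images of $w$ and $g_1g_2g_3$ in $\kappa[H]$ agree, and in turn so do their images in $A_H$. Applying the previous paragraph, their anti-invariant parts must match as well:
$$\hat h(\vec w)=\hat h(\vecl{g_1g_2g_3}).$$

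Combining the two steps gives
\begin{align*}
h^\#\bigl((\vecl{g_1}\cdot\vec w)-(\vecl{g_1}\cdot\vecl{g_1g_2g_3})\bigr)
&=\hat h(\vec{g_1})\cdot\hat h(\vec w)-\hat h(\vec{g_1})\cdot\hat h(\vecl{g_1g_2g_3})\\
&=\hat h(\vec{g_1})\cdot\bigl(\hat h(\vec w)-\hat h(\vecl{g_1g_2g_3})\bigr)=0,
\end{align*}
so $(\vecl{g_1}\cdot\vec w)-(\vecl{g_1}\cdot\vecl{g_1g_2g_3})\in\ker(h^\#)$, as required. There is no real obstacle in this lemma; the only point worth checking is the compatibility of $\hat h$ with the dot product, after which the conclusion follows immediately from the normalisation of the representative chosen for $w$.
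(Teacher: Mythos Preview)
Your argument is correct. It differs slightly from the paper's route, so a brief comparison is in order.

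The paper expands the dot products concretely using Lemma~\ref{lem12}i, writing
\[
(\vecl{g_1}\cdot\vec w)-(\vecl{g_1}\cdot\vecl{g_1g_2g_3})
=-\tfrac12\bigl((\barl{g_1w}-\barl{g_1^2g_2g_3})-(\barl{g_1^{-1}w}-\barl{g_2g_3})\bigr),
\]
and then observes that each bracketed difference lies in $\ker(h^\#)$ because $h(g_1w)=h(g_1^2g_2g_3)$ and $h(g_1^{-1}w)=h(g_2g_3)$. You instead argue abstractly that $\hat h$ commutes with $*$ and is a ring homomorphism, hence respects the dot product, and then use the single equality $\hat h(w)=\hat h(g_1g_2g_3)$ directly. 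Both proofs rest on the same normalisation of the word representing $w$; your version avoids unpacking the dot product into $\barl{\,\cdot\,}$ terms and makes the functorial reason transparent, while the paper's version stays within the explicit identities of \S\ref{iden} already established for computations elsewhere.
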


\begin{proof}
By Lemma \ref{lem12}i we have:\begin{eqnarray}(\vecl{g_1}\cdot\,\vec{w})-(\vecl{g_1}\cdot\,\vecl{g_1g_2g_3})=
-\frac12((\barl{g_1w}-\barl{g_1^2g_2g_3})- (\barl{g_1^{-1}w}-\barl{g_2g_3})).
\label{inkerh}\end{eqnarray}
Recall that the word $w\in F_I$ was constructed so that $h(w)=h(g_1g_2g_3)$.  Thus $h(g_1w)=h(g_1^2g_2g_3)$ and $h(g_1^{-1}w)=h(g_2g_3)$.  We conclude:$$h^{\#}(\barl{g_1w}-\barl{g_1^2g_2g_3})=0,\qquad h^{\#}(\barl{g_1^{-1}w}-\barl{g_2g_3})=0,$$
so by (\ref{inkerh}) we get $h^\#((\vecl{g_1}\cdot\,\vec{w})-(\vecl{g_1}\cdot\,\vecl{g_1g_2g_3}))=0$.
\end{proof}

Hence we know that $w_1$ has the form $\theta(\vecl{g_1}\cdot\,\vecl{g_1g_2g_3})+\alpha$ for some $\alpha$ in the ideal $J$ generated by $\theta({\rm ker} (h^\#))$.  We will proceed to give a generating set for the ideal $J$, but first we expand $\theta(\vecl{g_1}\cdot\,\vecl{g_1g_2g_3})$.

Fix $W\in A$ to be the following:
\begin{eqnarray}
W= -s_1^2s_2s_3xy+\mu_1\mu_3s_1s_2x+\mu_1\mu_2s_1s_3y+s_1^2\mu_2\mu_3.\label{W} \end{eqnarray}

\begin{lemma}\label{lem:exp}
We have $\theta(\vecl{g_1}\cdot\,\vecl{g_1g_2g_3})=W-s_1^2s_2s_3u+\mu_1s_1s_2s_3v$.
\end{lemma}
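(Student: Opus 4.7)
The plan is to unwind the expression $\vecl{g_1}\cdot\vecl{g_1g_2g_3}$ to a polynomial in the generators $\lambda_i,m_{ij},w_{123}$ of $R_I$, and then apply $\theta$ term by term.

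First I would use Lemma \ref{lem15} twice to expand the vector part of a triple product. Writing $g_1g_2 \cdot g_3$, the outer application gives
$$\vecl{g_1g_2g_3}=\barl{g_1g_2}\,\vecl{g_3}+\bar{g_3}\,\vecl{g_1g_2}+[\vecl{g_1g_2},\vecl{g_3}],$$
and the inner one gives $\barl{g_1g_2}=\bar{g_1}\bar{g_2}-\vecl{g_1}\cdot\vecl{g_2}$ and $\vecl{g_1g_2}=\bar{g_1}\vecl{g_2}+\bar{g_2}\vecl{g_1}+[\vecl{g_1},\vecl{g_2}]$. The next step is to take the dot product of the above with $\vecl{g_1}$, using bilinearity over $\KF$ (Lemma \ref{lem13}i).

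The key simplifications in this dot product come from Lemma \ref{lem13}iv, which kills $\vecl{g_1}\cdot[\vecl{g_1},\vecl{g_2}]$ and $\vecl{g_1}\cdot[\vecl{g_1},\vecl{g_3}]$, and from Lemma \ref{lem13}v applied to $[[\vecl{g_1},\vecl{g_2}],\vecl{g_3}]=(\vecl{g_1}\cdot\vecl{g_3})\vecl{g_2}-(\vecl{g_2}\cdot\vecl{g_3})\vecl{g_1}$, whose subsequent dot product with $\vecl{g_1}$ produces a term $(\vecl{g_1}\cdot\vecl{g_2})(\vecl{g_1}\cdot\vecl{g_3})$ that exactly cancels the analogous term coming from $-(\vecl{g_1}\cdot\vecl{g_2})(\vecl{g_1}\cdot\vecl{g_3})$ inside $\barl{g_1g_2}(\vecl{g_1}\cdot\vecl{g_3})$. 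Using Corollary \ref{cor3}i to identify $[\vecl{g_2},\vecl{g_3}]\cdot\vecl{g_1}=[\vecl{g_1},\vecl{g_2}]\cdot\vecl{g_3}=w_{123}$, I expect to be left with
$$\vecl{g_1}\cdot\vecl{g_1g_2g_3}=\bar{g_1}\bar{g_2}\,m_{31}+\bar{g_1}\bar{g_3}\,m_{12}+\bar{g_2}\bar{g_3}(1-\lambda_1^2)+\bar{g_1}\,w_{123}-m_{23}(1-\lambda_1^2),$$
where I have also used $m_{ii}=1-\lambda_i^2$ (R2) and the symmetry $m_{13}=m_{31}$ (R1).

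Finally I apply $\theta$, substituting $\lambda_i\mapsto\mu_i$, $m_{12}\mapsto s_1s_2x$, $m_{31}\mapsto s_3s_1y$, $m_{23}\mapsto s_2s_3(u+xy)$, $w_{123}\mapsto s_1s_2s_3v$, and noting that $\theta(1-\lambda_1^2)=1-\mu_1^2=s_1^2$. Collecting terms gives
$$\theta(\vecl{g_1}\cdot\vecl{g_1g_2g_3})=-s_1^2s_2s_3\,xy+\mu_1\mu_3 s_1s_2\,x+\mu_1\mu_2 s_1s_3\,y+s_1^2\mu_2\mu_3-s_1^2s_2s_3\,u+\mu_1 s_1s_2s_3\,v,$$
which by the definition (\ref{W}) of $W$ equals $W-s_1^2s_2s_3u+\mu_1 s_1s_2s_3 v$. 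The only real obstacle is bookkeeping, in particular tracking the signs so that the two $(\vecl{g_1}\cdot\vecl{g_2})(\vecl{g_1}\cdot\vecl{g_3})$ contributions cancel; the rest is mechanical substitution.
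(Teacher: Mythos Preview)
Your argument is correct and follows essentially the same approach as the paper: expand $\vecl{g_1g_2g_3}$ via two applications of Lemma~\ref{lem15}, dot with $\vecl{g_1}$, and apply $\theta$. The only difference is that the paper brackets the triple as $g_1(g_2g_3)$ rather than your $(g_1g_2)g_3$, which makes the entire $[\vecl{g_1},\,\cdot\,]$ term vanish immediately upon dotting with $\vecl{g_1}$ and so sidesteps the $m_{12}m_{31}$ cancellation you had to track.
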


\begin{proof}
By Lemma \ref{lem15} we have $g_2g_3=\barl{g_2}\,\barl{g_3}-(\vecl{g_2}\cdot\vecl{g_3})+\barl{g_3}\vecl{g_2}+\barl{g_2}\vecl{g_3}+[\vecl{g_2},\vecl{g_3}]$.  Again applying Lemma \ref{lem15} we have:
$$\vecl{g_1g_2g_3}=(\barl{g_2}\,
\barl{g_3}-\vecl{g_2}\cdot\vecl{g_3})\vecl{g_1}+\barl{g_1}(\barl{g_3}\vecl{g_2}+\barl{g_2}\vecl{g_3}+[\vecl{g_2},\vecl{g_3}])+[\vecl{g_1},\barl{g_3}\vecl{g_2}+\barl{g_2}\vecl{g_3}+[\vecl{g_2},\vecl{g_3}]].
$$
Taking the $\cdot$ product with $\vecl{g_1}$, the last term vanishes.  Then applying $\theta$ to both sides yields the result.
\end{proof}

\begin{lemma}
The ideal $J$ generated by $\theta({\rm ker}(h^\#))$ is generated by: $u,v,1-x^2,1-y^2$.
\end{lemma}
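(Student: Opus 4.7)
The plan is to write down an explicit generating set for $\theta({\rm ker}(h^\#))$ and read off that the ideal it generates in $A$ is exactly $\langle u,v,1-x^2,1-y^2\rangle$.

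First I would identify ${\rm ker}(h^\#)$ as a $\KF$-ideal. Since ${\rm ker}(h)=[F_I,F_I]$ is normally generated in $F_I$ by the three commutators $g_i^{-1}g_j^{-1}g_ig_j$ for $(i,j)\in\{(1,2),(2,3),(3,1)\}$, Theorem \ref{thm:norm} together with Lemma \ref{lem:com} says ${\rm ker}(h^\#)$ is generated by the scalar triple products $[\vec{g_i},\vec{g_j}]\cdot\vec{c}$ as $\vec{c}$ ranges over a $\KF$-generating set of $\Lambda_{F_I}$. By Lemma \ref{lem21} we may take such a generating set to be $\{\vec{g_k}\}_{k\in I}\cup\{[\vec{g_k},\vec{g_\ell}]\}_{k,\ell\in I}$. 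Thus ${\rm ker}(h^\#)$ is generated by the elements $[\vec{g_i},\vec{g_j}]\cdot\vec{g_k}$ and $[\vec{g_i},\vec{g_j}]\cdot[\vec{g_k},\vec{g_\ell}]$.

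Next I would apply $\theta$ to each of these generators. By Lemma \ref{lem13}iv the scalar triple products are alternating, so the only nonzero one (up to sign) is $[\vec{g_1},\vec{g_2}]\cdot\vec{g_3}=w_{123}$, and $\theta(w_{123})=s_1s_2s_3v$. For the bracket pairings, Corollary \ref{cor3}ii expresses each as a combination of dot products, which I can then evaluate directly using the formulae for $\theta(m_{ij})$. The key four computations are
\begin{eqnarray*}
\theta([\vec{g_1},\vec{g_2}]\cdot[\vec{g_1},\vec{g_2}])&=&s_1^2s_2^2(1-x^2),\\
\theta([\vec{g_1},\vec{g_3}]\cdot[\vec{g_1},\vec{g_3}])&=&s_1^2s_3^2(1-y^2),\\
\theta([\vec{g_1},\vec{g_2}]\cdot[\vec{g_1},\vec{g_3}])&=&s_1^2s_2s_3\,u,\\
\theta(w_{123})&=&s_1s_2s_3\,v.
\end{eqnarray*}
Since the $s_i$ are invertible in $\F$ by Lemma \ref{inverse}, this establishes the inclusion $\langle u,v,1-x^2,1-y^2\rangle\subseteq J$.

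For the reverse inclusion I would check that every remaining generator has $\theta$-image already lying in $\langle u,v,1-x^2,1-y^2\rangle$. The only cases not covered above are the pairings involving $[\vec{g_2},\vec{g_3}]$: direct expansion via Corollary \ref{cor3}ii gives, for instance, $\theta([\vec{g_2},\vec{g_3}]\cdot[\vec{g_2},\vec{g_3}])=s_2^2s_3^2(1-(u+xy)^2)$, and one verifies $1-(u+xy)^2\in\langle u,1-x^2,1-y^2\rangle$ using the identity $1-x^2y^2=(1-x^2)+x^2(1-y^2)$. The cross terms $\theta([\vec{g_1},\vec{g_2}]\cdot[\vec{g_2},\vec{g_3}])$ and $\theta([\vec{g_1},\vec{g_3}]\cdot[\vec{g_2},\vec{g_3}])$ similarly reduce by elementary rearrangement to expressions of the form $\alpha u + \beta(1-x^2) + \gamma(1-y^2)$.

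The bookkeeping is entirely mechanical, so the only mild obstacle is the reverse inclusion: one must make sure no pairing with a bracket involving two distinct subscripts $\{2,3\}$ produces an element of $A$ escaping the claimed ideal. The cancellation $s_2s_3(u+xy)-s_2s_3\cdot xy=s_2s_3 u$ occurring in $\theta(m_{23})$ is what makes the cross-term computations collapse into the claimed ideal, and is the one place where the carefully chosen form of $\theta(m_{23})$ is essential.
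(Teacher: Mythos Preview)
Your proposal is correct and follows essentially the same route as the paper: identify ${\rm ker}(h^\#)$ via Theorem \ref{thm:norm} and Lemma \ref{lem:com}, write down the seven nonzero dot products of $[\vec{g_i},\vec{g_j}]$ against the $\Lambda_{F_I}$-generators from Lemma \ref{lem21}, apply $\theta$ using Corollary \ref{cor3}ii, and observe that after rescaling by the units $s_i$ the four elements $u,v,1-x^2,1-y^2$ already appear while the three remaining ones (those involving $[\vec{g_2},\vec{g_3}]$) reduce to them by the elementary identities $x(u+xy)-y=xu-y(1-x^2)$, $x-y(u+xy)=x(1-y^2)-yu$, and $1-(u+xy)^2=(1-x^2)+x^2(1-y^2)-(u+2xy)u$. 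The paper carries out exactly this computation, listing the seven $\theta$-images explicitly before collapsing the last three; your presentation simply organises the same calculation as two inclusions.
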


\begin{proof}
As $F_I$ has three generators $g_1,g_2,g_3$, the kernel of $h$ is normally generated by the commutators $g_1^{-1}g_2^{-1}g_1g_2,\, g_1^{-1}g_3^{-1}g_1g_3,\, g_2^{-1}g_3^{-1}g_2g_3$.  Thus by Theorem \ref{thm:norm}: $${\rm ker}(h^\#)=\{g_1^{-1}g_2^{-1}g_1g_2,\, g_1^{-1}g_3^{-1}g_1g_3,\, g_2^{-1}g_3^{-1}g_2g_3\}^\#.$$   Taking the generating set for $\Lambda_{F_I}$ from Lemma \ref{lem21} and applying Lemma \ref{lem:com} we obtain a generating set for ${\rm ker}(h^\#)$ given by taking the $\cdot$ products of $$[\vecl{g_1},\vecl{g_2}],[\vecl{g_1},\vecl{g_3}],[\vecl{g_2},\vecl{g_3}]\qquad {\rm with}\qquad \vecl{g_1},\vecl{g_2},\vecl{g_3},[\vecl{g_1},\vecl{g_2}],[\vecl{g_1},\vecl{g_3}],[\vecl{g_2},\vecl{g_3}].$$

Avoiding zero's and repeats this leaves a generating set of seven elements for ${\rm ker}(h^\#)$.   After applying $\theta$ to these seven elements (using Corollary \ref{cor3}ii to simplify) and rescaling by units in $\F$, we are left with a generating set for $J$:$$u,v,1-x^2,1-y^2, \qquad 1-(u+xy)^2,\qquad x(u+xy)-y,\qquad x-y(u+xy).$$

To complete the proof we need only note that:
\begin{eqnarray*}
1-(u+xy)^2&=&(1-x^2)+x^2(1-y^2)-(u+2xy)u,\\
x(u+xy)-y&=& xu-y(1-x^2),\\
x-y(u+xy)&=&x(1-y^2)-yu.
\end{eqnarray*}
\end{proof}

\begin{lemma}\label{w1constraint}
We have $w_1=W+\alpha$, for some $\alpha\in J$.
\end{lemma}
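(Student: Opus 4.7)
The proof is essentially assembling the three preceding lemmas. The plan is to start from the tautological decomposition
\[
w_1 \;=\; \theta(\vecl{g_1}\cdot\vec{w}) \;=\; \theta(\vecl{g_1}\cdot\vecl{g_1g_2g_3}) \;+\; \theta\bigl((\vecl{g_1}\cdot\vec{w}) - (\vecl{g_1}\cdot\vecl{g_1g_2g_3})\bigr),
\]
and then identify each piece on the right.

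For the first piece, Lemma \ref{lem:exp} expands it as $W - s_1^2 s_2 s_3 u + \mu_1 s_1 s_2 s_3 v$. The two trailing terms $-s_1^2 s_2 s_3 u$ and $\mu_1 s_1 s_2 s_3 v$ are scalar multiples of $u$ and $v$, and both $u$ and $v$ lie in the ideal $J$ by the generating set just established for $J$. Hence these two terms already belong to $J$.

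For the second piece, Lemma \ref{lem:diff} asserts that $(\vecl{g_1}\cdot\vec{w}) - (\vecl{g_1}\cdot\vecl{g_1g_2g_3}) \in \ker(h^\#)$, so applying $\theta$ places the whole expression inside $\theta(\ker(h^\#)) \subset J$ by the very definition of $J$.

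Combining these observations, we may write $w_1 = W + \alpha$ where
\[
\alpha \;=\; -s_1^2 s_2 s_3 u \;+\; \mu_1 s_1 s_2 s_3 v \;+\; \theta\bigl((\vecl{g_1}\cdot\vec{w}) - (\vecl{g_1}\cdot\vecl{g_1g_2g_3})\bigr)
\]
lies in $J$, as required. There is no real obstacle here — the lemma is a bookkeeping consequence of Lemmas \ref{lem:diff} and \ref{lem:exp} together with the characterisation of $J$; the substantive work was already done in setting up those results.
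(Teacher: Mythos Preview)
Your proof is correct and follows essentially the same approach as the paper: both use Lemma~\ref{lem:diff} to place $w_1-\theta(\vecl{g_1}\cdot\vecl{g_1g_2g_3})$ in $J$, Lemma~\ref{lem:exp} to expand $\theta(\vecl{g_1}\cdot\vecl{g_1g_2g_3})$ as $W$ plus terms in $u,v\in J$, and then combine. The only difference is cosmetic ordering of the steps.
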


\begin{proof}
By Lemma \ref{lem:diff} we have $w_1-\theta(\vecl{g_1}\cdot\,\vecl{g_1g_2g_3})\in J$.  Thus by Lemma \ref{lem:exp}: $$ w_1-W+s_1^2s_2s_3u-\mu_1s_1s_2s_3v\in J.$$
Noting that $s_1^2s_2s_3u-\mu_1s_1s_2s_3v\in J$ we have $w_1-W \in J$.
\end{proof}

\begin{theorem}\label{SW1SW2}
For $r,s,t>1$ let $G=C _r\star C_s \star C_t$, let the field $\F$ and elements $\mu_1,\mu_2,\mu_3,s_1,s_2,s_3\in \F$ be as in  Lemma \ref{inverse} and let $A=\F[x,y,u,v] / \langle(1-x^2)(1-y^2)-(u^2+v^2)\rangle$.  Further let $J \lhd A$ be the ideal generated by $\{u,v,1-x^2,1-y^2\}$, let $W\in A$ be as given by (\ref{W}) and let the matrix $M$ be as given by (\ref{M}). 

We have an implication of statements {\bf SW2} $\implies$ {\bf SW1} where {\bf SW1}, {\bf SW2} are{\rm :}

\bigskip
\noindent {\bf SW1}{\rm :} No $w \in G$ normally generates $G$.

\noindent {\bf SW2}{\rm :} Given $a\in A$ represented by $M$ and $\alpha \in J$, the elements $\{a,W+\alpha\}$ generate a proper ideal in $A$. 
\end{theorem}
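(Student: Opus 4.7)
The plan is to prove the contrapositive: assuming SW1 fails, produce $a$ represented by $M$ and $\alpha \in J$ such that $\{a, W+\alpha\}$ generates the unit ideal in $A$, contradicting SW2. The proof is essentially an assembly of the lemmas that precede the theorem, with all the substantive work already done.

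Suppose $w \in G$ normally generates $G$. First I would run the setup that precedes the theorem: since the abelianisation $f(w)$ must generate $C_r \times C_s \times C_t$, one can relabel so $f(w) = (g_1, g_2, g_3)$ and choose a word representative of $w$ in $F_I$ in which each generator has total index $1$. Then by Theorem \ref{hashhash}, $\{g_1^r, g_2^s, g_3^t, w\}^{\#\#} = \{g_1, g_2, g_3\}^{\#\#}$ in $\KF$. Apply $\theta$ to both sides: by Lemma \ref{whole} the right-hand image generates all of $A$, and by Lemma \ref{none} the left-hand image generates the same ideal as $\theta(\{w\}^{\#\#})$. Hence the ideal generated by $\theta(\{w\}^{\#\#})$ is $A$ itself, i.e. $1$ can be written as an $A$-linear combination of the elements $w_1, w_2, w_2', w_3, w_3'$ supplied by Lemma \ref{whashhash:genset}.

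Next I would split this expression into its ``first coordinate'' and ``remaining coordinates'' pieces. Write
\[
1 = c_1 w_1 + c_2 w_2 + c_2' w_2' + c_3 w_3 + c_3' w_3'
\]
for some $c_1, c_2, c_2', c_3, c_3' \in A$, and set $a = c_2 w_2 + c_2' w_2' + c_3 w_3 + c_3' w_3'$. By Lemma \ref{w2w3constraint} the element $a$ is represented by $M$. By Lemma \ref{w1constraint} there exists $\alpha \in J$ with $w_1 = W + \alpha$. Substituting gives $1 = c_1(W + \alpha) + a$, so $1$ lies in the ideal of $A$ generated by $\{a, W + \alpha\}$. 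This contradicts SW2, so no such $w$ exists, which is SW1.

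Since every nontrivial ingredient has already been isolated in Lemmas \ref{whole}, \ref{none}, \ref{wproper}, \ref{whashhash:genset}, \ref{w2w3constraint}, and \ref{w1constraint}, there is no real obstacle; the only point requiring any care is making sure the element $a$ produced by grouping together the $w_2, w_2', w_3, w_3'$ terms is genuinely represented by $M$ in the sense of the preceding definition, which is precisely the content of Lemma \ref{w2w3constraint} (combining it with Lemma \ref{kernelisimage} if one wished to be fully explicit about the vector $\mathbf{q}_2$).
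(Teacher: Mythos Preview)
Your proof is correct and follows essentially the same route as the paper's: both argue by contradiction, assuming some $w$ normally generates $G$, and then invoke Lemmas \ref{whole}, \ref{none}, \ref{whashhash:genset}, \ref{w2w3constraint}, and \ref{w1constraint} in the same way to conclude that $1$ lies in the ideal generated by $\{a, W+\alpha\}$, violating \textbf{SW2}. The only cosmetic difference is that you frame it as a clean contrapositive (not-\textbf{SW1} $\implies$ not-\textbf{SW2}) whereas the paper assumes \textbf{SW2} first and derives a contradiction from the supposed normal generator.
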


\begin{proof}
Assume that {\rm SW2} is true.  Suppose $w \in G$ normally generates $G$.  Let $a$ be any linear combination of $w_2,w_2',w_3,w_3'$.  Lemma \ref{w2w3constraint} tells us $a$ is represented by $M$.    Lemma \ref{w1constraint} tells us $w_1=W+\alpha$ for some $\alpha \in J$.  Then {\bf SW2} implies that we cannot write $1=\lambda w_1 +a$ with  $\lambda \in A$.  Thus $w_1,w_2,w_2',w_3,w_3'$ generate a proper ideal in $A$.  

From Lemma \ref{whashhash:genset} we know that this proper ideal is precisely the ideal generated by $\theta(\{w\}^{\#\#})$ .  From Lemma \ref{wproper} we conclude that $w$ does not normally generate $G$.
\end{proof}

Thus applying the functor $\#$ to the groups involved in the Scott-Wiegold conjecture reduces the problem to proving {\bf SW2}, a statement in commutative algebra.  Unlike {\bf B2} this statement is not immediately obvious, so we cannot claim this time, that the theory associated to $\#$ immediately proves the result.

Note however that Theorem \ref{SW1SW2} holds for any field $\F$ satisfying the properties demanded by Lemma \ref{inverse}.  That is {\bf SW2} only needs to hold for one such field in order to deduce the Scott-Wiegold conjecture.  We expect that modifying Howie's original proof \cite{Howi}, one may take $\F=\R$ and have $\vert s_1^2s_2s_3\vert> \vert s_1s_2\mu_1\mu_3\vert+\vert s_1s_3\mu_1\mu_2\vert+\vert s_1^2\mu_2\mu_3\vert$, and prove {\bf SW2} in this case, thus proving the Scott-Wiegold conjecture.

This may seem a long-winded approach given the concise nature of the proof in \cite{Howi}.  One benefit of doing it this way is that the topological arguments in \cite{Howi}, when applied to statements about ideal generation such as {\bf SW2}, can be understood algebraically in terms of Euler classes / Euler class groups \cite{Bhat, Kong}, and weak Mennicke symbols \cite{Kall}.  

However what would really be illuminating is a proof of a more general algebraic statement which implies {\bf SW2}, analogous to how Lemma \ref{com:res}  implies {\bf B2}.  Ideally:

\begin{conjecture}\label{alg:conj}  Let $\F$ be any field, let $$A=\F[x,y,u,v] / \langle(1-x^2)(1-y^2)-(u^2+v^2)\rangle,$$ and let $J \lhd A$ be the ideal generated by $\{u,v,1-x^2,1-y^2\}$.  Let $M$ be as defined in (\ref{M}) and let $W'=c_3xy+c_2y+c_1x+c_0,$ with $c_0,c_1,c_2,c_3 \in \F$ and $c_3 \neq 0$.  

Given $a\in A$ represented by $M$ and $\alpha\in J$, the elements $\{a,W'+\alpha\}$ generate a proper ideal in $A$.

\end{conjecture}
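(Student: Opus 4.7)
The plan is to prove Conjecture~\ref{alg:conj} by producing, for each admissible pair $(a,\alpha)$, a maximal ideal of $A$ containing both $a$ and $W'+\alpha$; equivalently, after passing to the algebraic closure $\overline\F$, a common zero of these two elements in the affine variety $V\subset\overline\F^{\,4}$ cut out by $(1-x^2)(1-y^2)=u^2+v^2$. The variety $V$ is irreducible of dimension~$3$, but since we work affinely the nonemptiness of a codimension-two intersection is not automatic, and making it nonempty in every case is precisely the content of the conjecture.

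The easy reduction is to observe that every entry of $M$ is one of $\pm u,\pm v,1-x^2,1-y^2$, all of which belong to $J=\langle u,v,1-x^2,1-y^2\rangle$; hence any $a={\bf q_1}^T M{\bf q_2}$ lies in $J$, and $\alpha\in J$ by hypothesis. Both elements therefore vanish on the finite scheme $V(J)$. Assuming for simplicity that $\mathrm{char}\,\F\neq 2$, this scheme consists of the four corner points $(\epsilon_1,\epsilon_2,0,0)$ with $\epsilon_i=\pm 1$, at which $W'+\alpha$ takes the value $W'(\epsilon_1,\epsilon_2)$. So if $W'$ vanishes at some corner, the corresponding maximal ideal $\langle x-\epsilon_1,y-\epsilon_2,u,v\rangle$ contains both $a$ and $W'+\alpha$ and we are done; it remains to treat the case where $W'$ is a unit in $A/J\cong\overline\F^{\,4}$.

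In that hard case I would search for a common zero on one of the four two-dimensional strata $V\cap\{x=\pm1\}$ and $V\cap\{y=\pm1\}$, on each of which the defining relation collapses to $u^2+v^2=0$, splitting over $\overline\F$ into two lines. On any such stratum the matrix $M$ acquires many zero entries and its determinantal structure simplifies drastically; combined with the identity $MM^T=(2-x^2-y^2)\,MP$ (with $P$ the permutation interchanging the first and second column-pairs), one sees $\det M\equiv 0$ in $A$, giving a non-trivial syzygy on $M{\bf q_2}$ for every ${\bf q_2}$. Using this syzygy I would pin down the shape of $a$ restricted to each stratum, then show that the restriction of $W'+\alpha$ (a polynomial of genuine positive degree in the remaining coordinate, thanks to $c_3\neq0$) must share a root with $a$ on at least one stratum.

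The step I expect to be the main obstacle is exactly the hard case. It is the algebraic content of Howie's topological proof of the Scott--Wiegold conjecture in the setting of the present paper, and the author explicitly remarks that he is not claiming a proof: an adversary's free choice of ${\bf q_1},{\bf q_2}\in A^4$ and $\alpha\in J$ can apparently shift every obvious candidate common zero. Over $\F=\R$ the natural route is to convert the inequality $|c_3|>|c_0|+|c_1|+|c_2|$ (which holds in the Scott--Wiegold application) into an Euler-class or weak-Mennicke-symbol obstruction for the would-be unimodular pair $(a,W'+\alpha)$ in the three-dimensional domain $A$, and to force a real common zero by a topological degree argument. Proving the statement over an arbitrary field and without any inequality on the $c_i$ appears to require a genuinely new algebraic idea, and is essentially the open content of the conjecture.
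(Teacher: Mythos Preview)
The statement you are attempting to prove is labelled as a \emph{conjecture} in the paper, and the paper does not supply a proof. Immediately after stating it the author writes ``If a result such as this could be proved\ldots'', and earlier remarks that even the weaker statement {\bf SW2} ``is not immediately obvious, so we cannot claim this time, that the theory associated to $\#$ immediately proves the result.'' There is therefore no proof in the paper against which to compare your attempt.

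Your proposal is not a proof either, and to your credit you say so explicitly in your final paragraph. Your easy reduction is correct: every entry of $M$ lies in $J$, so any $a={\bf q_1}^T M{\bf q_2}$ lies in $J$, and hence both $a$ and $\alpha$ vanish at the four corner points $(\pm1,\pm1,0,0)$ of $V(J)$; when $W'$ happens to vanish at one of these corners the corresponding maximal ideal contains both $a$ and $W'+\alpha$, and the ideal is proper. But the remaining ``hard case'' --- where $W'$ is nonzero at all four corners --- is precisely the substance of the conjecture, and your outline (restricting to the strata $x=\pm1$ or $y=\pm1$, exploiting the syzygy $NM=0$, or over $\R$ invoking Euler-class/degree arguments under the inequality $|c_3|>|c_0|+|c_1|+|c_2|$) is a plan rather than an argument. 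Nothing in your sketch forces a common zero over a general field with arbitrary $c_i$ subject only to $c_3\neq0$; as you yourself acknowledge, that is the genuinely open content. The paper's own discussion agrees: it suggests the $\F=\R$ route via Howie's methods for {\bf SW2}, but leaves the general Conjecture~\ref{alg:conj} open.
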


If a result such as this could be proved then we would understand, at the level of commutative group rings, the obstruction to normally generating $C_r \star C_s \star C_t$ by a single element.  Then having attained results using $\#$ about $3$-generated groups as well as $2$-generated groups (Theorem \ref{Boy}), we could hope to approach the $4$-generated groups from Potential counterexamples \ref{pot1} and \ref{pot2} by these means.  As discussed in \S1, if successful this could result in the resolution of several conjectures; in the first place the Wiegold conjecture and the Relation Gap problem.

\end{document}